\documentclass[11pt,captions=nooneline,DIV=14,parskip=half]{amsart}

\usepackage{amsmath,amsthm,amssymb,mathrsfs,amsfonts,verbatim,enumitem,color}
\usepackage{bbm}

\usepackage{mathabx}
\usepackage{etoolbox} 
\usepackage{bbm}
\usepackage[all,tips]{xy}
\usepackage{graphicx,ifpdf}
\ifpdf
   \DeclareGraphicsRule{*}{mps}{*}{}
\fi

\usepackage{geometry}
\geometry{a4paper,left=3cm,right=3cm,top=3cm,bottom=3cm}

\usepackage{lmodern}

\usepackage{scrextend}
\KOMAoption{fontsize}{13pt}

\usepackage{blindtext}

\newtheorem{thm}{Theorem}[section]
\newtheorem{lem}[thm]{Lemma}
\newtheorem{cor}[thm]{Corollary}
\newtheorem{prop}[thm]{Proposition}

\theoremstyle{definition}

\newtheorem{ex}[thm]{Example}

\theoremstyle{remark}

\numberwithin{equation}{section}


\newcommand {\new}[1] {\textcolor{black}{#1}}

\makeatletter
\newcommand{\rmnum}[1]{\romannumeral #1}
\newcommand{\Rmnum}[1]{\expandafter\@slowromancap\romannumeral #1@}
\makeatother

\newcommand{\ba}{\mathbf{a}}

\newcommand{\bx}{\mathbf{x}}

\newcommand{\R}{\mathbb{R}}
\newcommand{\N}{\mathbb{N}}
\newcommand{\SL}{\operatorname{SL}}
\newcommand{\GL}{\operatorname{GL}}
\newcommand{\Z}{\mathbb{Z}}
\newcommand{\Q}{\mathbb{Q}}
\newcommand{\C}{\mathbb{C}}

\newcommand{\til}{\widetilde}

\newcommand{\dd}{\; \mathrm{d}}
\newcommand{\ap}{\mathfrak a_{\mathfrak u}}

\newcommand{\Ad}{\operatorname{Ad}}
\newcommand{\rk}{\mathrm{rank\,}}
\newcommand{\supp}{\mathrm{supp\,}}
\newcommand{\diag}{\mathrm{diag}}


\begin{document}

\title{Expanding cone and applications to homogeneous dynamics}


\author{Ronggang Shi}
	\address{Shanghai Center for Mathematical Sciences, Fudan University, Shanghai 200433, PR China}
\email{ronggang@fudan.edu.cn}
\thanks{The author is supported by   NSFC (11871158) and 
NSF 0932078 000.}


\subjclass[2010]{Primary   28A33, 37C85; Secondary  22E40, 17B45.}

\date{}


\keywords{expanding cone, homogeneous dynamics,  equidistribution, ergodic theorem}

\begin{abstract}
Let $U$ be a  horospherical subgroup of a  noncompact simple   Lie group $H$
and let $A$ be a maximal split torus in the normalizer of   $U$.
We define the  expanding cone $A_U^+$ in  $A$ with respect to
$U$ and show that it can be explicitly calculated.
We prove several dynamical results for 
 translations of $U$-slices by elements of $A_U^+$ on finite volume homogeneous space
$G/\Gamma$ where   $G$ is a Lie group containing $H$. 
More precisely, we prove quantitative nonescape of mass and 
equidistribution of a $U$-slice.
If   $H$  is a normal subgroup of $G$ and the $H$ action on $G/\Gamma$ has a spectral gap, 
we prove  effective multiple equidistribution and  pointwise equidistribution   with an error rate. In the  paper we formulate the notion of the  expanding cone 
and prove the  dynamical results above  in the  more general  setting where  $H$ is 
 a  semisimple Lie group without compact factors. 
 
 In the appendix, joint with Rene R\" uhr, we prove a multiple ergodic theorem with an error rate. 
\end{abstract}

\maketitle
\markright{}

\section{Introduction}\label{sec;introduction}

Let $H$ be a connected semisimple Lie group without compact factors.
 Let $U$ be the unipotent radical of an absolutely proper  parabolic subgroup $P$ of $H$,
 where absolutely proper means
the projection of $P$ to each simple factor of $H$ is not surjective. 
Let 
  $A$ be a maximal 
connected $\Ad$-diagonalizable subgroup of $P$, i.e.~the image of $A$ under the adjoint representation 
is diagonal under some basis of the Lie algebra of $H$ and $A$ is maximal among all connected
subgroups of $P$ with this property. 
In this paper we define the  concept of the  expanding cone (denoted by $A_U^+$) in 
$A$
 associated to $U$ and 
prove some  dynamical results  for its translations of a $U$-slice 
(a piece of a $U$-orbit) on
finite volume  homogeneous spaces $G/\Gamma$ where
$G$ is a  Lie group containing $H$  and $\Gamma $ is a lattice  (a discrete subgroup of $G$
with finite covolume). 
 
The expanding cone is a generalization of the following example (see \S\ref{sec;cone verify} for the proof) which is often used
in homogeneous dynamics due to its close connection    to metric number theory, 
see e.g. ~\cite{klw04}\cite{km98}\cite{kw08}\cite{s09}\cite{s10}. 
\begin{ex}\label{ex;main}
Let $m, n$ be positive integers.
\begin{align*}
H=\SL_{m+n}(\R), 
 U =\left \{
 \left(
 \begin{array}{cc}
 1_m & p\\
 0_{mn} & 1_n 
 \end{array}
 \right)\in H: p\in \mathrm{M}_{mn}(\R)
 \right\} ,
\end{align*}
 $A$ is the identity  component  of the group of  diagonal matrices in $G$,  and 
\begin{align}\label{eq;cone example}
A^+_U=\{\diag(e^{r_1 }, \ldots, e^{r_m}, e^{-t_1}, \ldots, e^{-t_n})\in G: r_i, t_j>0\}.
\end{align}
\end{ex}

The expanding cone allows us to generalize several dynamical  results in the setting of Example \ref{ex;main}
to general homogeneous spaces. In this paper we  prove 
quantitative nonesape of mass, equidistribution of translates of $U$-slices, 
 effective multiple  equidistribution of smooth  $U$-slices  and    pointwise equidistribution
 with an error rate.  
 
 Besides generalizations, the novelty of the paper is that it further develops the method 
 of Kleinbock-Margulis \cite{km12} to prove effective {\em multiple} correlations of translations of  smooth measures on $U$-slices. Here multiple means 
 $k$-step for arbitrary positive integer $k$, while the previous results \cite{km12}\cite{dkl} are $1$-step and \cite{ksw}
 is $2$-step. The effective multiple correlations  have more applications 
 than the $2$-step case. 
 For example, 
   a multiple Birkhoff type ergodic theorem  is proved  in the appendix 
 joint with Rene R\"uhr and a central limit theorem is proved  in \cite{shi}.
 We remark here that it is not clear how to prove these results using the $2$-step case  even 
 in the setting of Example \ref{ex;main}. 
 We will  discuss briefly the history and the current status of the multiple ergodic theorem
 in the appendix.

\subsection{Expanding cone}\label{sec;a cone}

Let $\rho: H\to \GL(V)$ be 
a  representation of $ H $ on a (nonzero)  finite dimensional real vector space $V$. 
We say $(\rho, V)$ is irreducible if $\{0\}$ and $V$
are the only $ H $-invariant subspaces. 
 For every $a\in A$ we can write  
\begin{align*}
V=V^+_a\oplus V^0_a\oplus V^-_a
\end{align*}
where $V^+_a, V^0_a, V^-_a$ are the sums of the eigenspaces of $\rho(a)$ with eigenvalues $>,=, <1$,
respectively. 
Following  \cite{s}
we say $ U $ is 
$a$-expanding\footnote{The concept  ``$U$ is $a$-expanding" is not exactly the same as that in \cite{s}, but they
are equivalent which can be deduced from \cite[Lemma A.1]{s}. } for some $a\in A$ if, for every nontrivial irreducible representation $(\rho, V)$ of $ H $,  the subspace
\begin{align*}
V^{ U }:=\{v\in V: \rho(h)v=v \quad \forall h\in U\}
\end{align*}
 is contained in $V_a^+$. 
The expanding cone of $ U $ in $A$ is defined   to be 
\begin{align}\label{eq;expanding cone}
A^+_U:=\{a\in A:  U  \mbox{ is } a\mbox{-expanding}\}. 
\end{align}

Let $\mathfrak h$ and $\mathfrak u$ be the Lie algebras of $H$ and $U$, respectively. 
We will show in Theorem \ref{thm;open} that the expanding   cone $A_U^+$ only depends on the Lie algebras $\mathfrak h, \mathfrak u$ and 
it can be described explicitly. 
To do this we need to introduce some notation related to  Lie algebras. 
Let $\mathfrak a$  be the Lie algebra of $A$ 
 and let $\mathfrak a^*$ be the dual vector space of $\mathfrak a$. 
Let  $\Phi=\Phi( \mathfrak h,\mathfrak a)\subset \mathfrak a^*$  be the relative root system of $\mathfrak h$ and let $\Phi(\mathfrak u)$ be the subset of
roots whose eigenvectors are in $\mathfrak u$.
 Recall that the  Killing form $\mathbf B(\cdot, \cdot)$ is positive definite on $\mathfrak a$, so for each $\alpha\in \mathfrak a^*$
 we can associate an $ s _\alpha\in \mathfrak a$ by $\mathbf B( s _\alpha,  s )=\alpha( s )$ for every $ s \in \mathfrak a$.
 By this isomorphism we can associate to $\Phi(\mathfrak u)$ the following open and  convex cone in $\mathfrak a$:
 \begin{align}\label{eq;cone}
\mathfrak a^+_{\mathfrak u}:=
 \big\{
 \sum_{\alpha\in \Phi(\mathfrak u)} t_\alpha  s _\alpha: t_\alpha> 0\ \mbox{for all }\alpha
\big\}. 
\end{align}
Let $\exp : \mathfrak a\to A$ be the usual exponential map which 
is  an isomorphism 
from  the additive group $\mathfrak a$ to  $A$. 
\begin{thm}\label{thm;open}
The expanding cone $A^+_U=\exp \mathfrak a^+_{\mathfrak u}=\{\exp  s :  s  \in \mathfrak a^+_{\mathfrak u}\}$.  
\end{thm}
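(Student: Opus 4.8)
The plan is to prove the two inclusions $\exp\mathfrak a^+_{\mathfrak u}\subseteq A^+_U$ and $A^+_U\subseteq \exp\mathfrak a^+_{\mathfrak u}$ separately, in both cases reducing the $a$-expanding condition to a computation of root-space weights via the highest-weight theory of irreducible representations. The key translation is this: for a nontrivial irreducible representation $(\rho,V)$ of $H$ and an element $a=\exp s$ with $s\in\mathfrak a$, the decomposition $V=V^+_a\oplus V^0_a\oplus V^-_a$ is the decomposition into sums of weight spaces $V_\lambda$ according to the sign of $\lambda(s)$, where $\lambda$ ranges over the restricted weights of $(\rho,V)$ relative to $\mathfrak a$. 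Thus ``$U$ is $a$-expanding'' says precisely that every weight $\lambda$ of $(\rho,V)$ whose weight space meets $V^U$ satisfies $\lambda(s)>0$, for every nontrivial irreducible $(\rho,V)$. So I first want to identify, intrinsically, the set of weights $\lambda$ that can occur on $V^U$.

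The main structural input is: $V^U$ is the subspace of $\mathfrak u$-invariants, and since $\mathfrak u$ is the nilradical of the parabolic $\mathfrak p$ with $\mathfrak a\subseteq\mathfrak p$, the $\mathfrak a$-weights occurring on $V^U$ are exactly the \emph{lowest} weights of $V$ with respect to the ordering that makes $\Phi(\mathfrak u)$ the positive roots outside the Levi — more precisely, $V^U$ is spanned by the lowest weight vector (for an appropriate choice making $\mathfrak u$ lower-triangular) together with its orbit under the Levi $\mathfrak z(\mathfrak a)$-part, so the weights on $V^U$ are $w_0\lambda_0$ translated by elements of the root lattice of the Levi. The crucial sign computation is then: a weight $\lambda$ on $V^U$, written in terms of the simple roots, has nonnegative coefficients on the simple roots lying in $\Phi(\mathfrak u)$ and these coefficients are not all zero (here nontriviality of $(\rho,V)$ and absolute properness of $P$ — so that $\Phi(\mathfrak u)$ is nonempty on every simple factor — are used), while it is orthogonal, under $\mathbf B$, to the roots of the Levi. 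Combining this with the definition \eqref{eq;cone} of $\mathfrak a^+_{\mathfrak u}$, for $s=\sum_{\alpha\in\Phi(\mathfrak u)}t_\alpha s_\alpha$ with all $t_\alpha>0$ one gets $\lambda(s)=\mathbf B(s_\lambda,s)=\sum t_\alpha\mathbf B(s_\lambda,s_\alpha)>0$ by expanding $s_\lambda$ in the appropriate basis; this gives $\exp\mathfrak a^+_{\mathfrak u}\subseteq A^+_U$.

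For the reverse inclusion I would exhibit, for each $\alpha\in\Phi(\mathfrak u)$, a nontrivial irreducible representation $(\rho_\alpha,V_\alpha)$ of $H$ whose space of $\mathfrak u$-invariants carries the single weight $w_0\lambda_\alpha$ that ``detects'' the coordinate $t_\alpha$ — concretely, take the fundamental representation attached to the simple root dual to $\alpha$ (passing to a finite cover of $H$ if necessary, which is harmless since everything is at the level of Lie algebras by Theorem \ref{thm;open}'s own assertion). If $a=\exp s\in A^+_U$, then requiring $V_\alpha^U\subseteq (V_\alpha)^+_a$ forces a positivity condition on $s$; running over all $\alpha\in\Phi(\mathfrak u)$ produces a system of strict linear inequalities on $s$ whose solution set is exactly the open cone $\mathfrak a^+_{\mathfrak u}$. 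I expect the main obstacle to be the bookkeeping in the weight computation on $V^U$: pinning down exactly which weights occur, controlling the Levi-orthogonality, and making the pairing $\mathbf B(s_\lambda,s_\alpha)$ manifestly positive for every $\alpha\in\Phi(\mathfrak u)$ — this requires care with the non-reduced possibilities in the restricted root system and with simple factors, though absolute properness of $P$ is exactly what rules out the degenerate cases. A clean way to organize this is to first treat $H$ simple and then factor through the product decomposition, since both $\Phi(\mathfrak u)$ and the cone \eqref{eq;cone} split along simple factors.
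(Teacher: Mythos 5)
Your overall framing — translate ``$U$ is $a$-expanding'' into the condition that every $\mathfrak a$-weight $\lambda$ occurring on $V^{\mathfrak u}$ satisfies $\lambda(s)>0$, then identify the set of such weights intrinsically — is the right one, and it is the paper's framing too. But three of the supporting claims are either false or circular, and they are precisely the places where the paper has to do real work.

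First, the claim that a weight $\lambda$ with $V_\lambda\cap V^{\mathfrak u}\neq\{0\}$ ``is orthogonal, under $\mathbf B$, to the roots of the Levi'' is wrong. Take $H=\SL_3(\R)$, $I=\{\alpha_2\}$, $V=(\R^3)^*$: then $V^{\mathfrak u}$ carries the weights $-e_2$ and $-e_3$, and $\langle -e_2,\alpha_2\rangle\neq 0$. What is true, and what the argument actually needs, is $\langle\lambda,\alpha\rangle\ge 0$ for every $\alpha\in\Phi(\mathfrak u)$, i.e.\ $\lambda\in\mathcal C$ in the notation of (\ref{eq;dual cone}). The paper establishes this in Lemma \ref{lem;cone} not by a lowest-weight/coefficient bookkeeping, but by an $\mathfrak{sl}_2$-triple argument: a vector in $V_\lambda\cap V^{\mathfrak u}$ is annihilated by $\mathfrak h_\alpha$ and is therefore a highest-weight vector for each $\mathfrak{sl}_2$-triple built from $\alpha\in\Phi(\mathfrak u)$. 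Moreover, strict positivity of $\lambda(s)=\sum_\alpha t_\alpha\langle\lambda,\alpha\rangle$ does not follow from ``coefficients not all zero''; it follows from $\lambda\neq 0$ together with the fact that $\{s_\alpha:\alpha\in\Phi(\mathfrak u)\}$ spans $\mathfrak a$ (Lemma \ref{lem;new interior}). And $\lambda\neq 0$ is not a coefficient count either — it is the epimorphic-subalgebra statement for $\mathfrak a\oplus\mathfrak u$ (Lemma \ref{lem;epimorphic}), which you do not invoke at all.

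Second, the reverse inclusion is indexed by the wrong set. The open cone $\mathfrak a^+_{\mathfrak u}$ in (\ref{eq;cone}) is the \emph{conical hull} of the vectors $s_\alpha$, $\alpha\in\Phi(\mathfrak u)$; its defining half-spaces are given by the generators of the \emph{dual} cone $\mathcal C$, and there is no bijection between $\Phi(\mathfrak u)$ and the facets of $\mathfrak a^+_{\mathfrak u}$ (already for $\SL_{m+n}$ one has $|\Phi(\mathfrak u)|=mn$ while $\overline{\mathfrak a^+_{\mathfrak u}}$ has $m+n$ facets). Also ``the fundamental representation attached to the simple root dual to $\alpha$'' is undefined for non-simple $\alpha\in\Phi(\mathfrak u)$. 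The paper instead reduces to a finite set $C\subset\mathcal C\cap(\Delta\setminus\{0\})$ cutting out $\ap^+$ (Lemma \ref{lem;cone reduction}, a fact about rational polyhedral cones), and for each $\beta\in C$ constructs a nontrivial irreducible $(\rho,V)$ with a nonzero weight space $V_{m\beta}\subset V^{\mathfrak u}$ by moving $\beta$ into dominant position via $W_I$ and taking the corresponding highest-weight representation (Lemma \ref{lem;tits}). This is not a ``one fundamental rep per root'' scheme, and the $W_I$-conjugation is essential.

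Third, ``passing to a finite cover of $H$ if necessary, which is harmless since everything is at the level of Lie algebras by Theorem \ref{thm;open}'s own assertion'' is circular: the theorem is what you are trying to prove. The paper avoids the cover by replacing a Lie-algebra representation $V$ with a tensor power $V^{\otimes n}$ which integrates to the given group $H$; this changes the weight from $m\beta$ to $mn\beta$ but preserves the sign of $\beta(s)$, which is all that matters.
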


There are  possible  refinements of the  expanding cone which we will not deal with in this paper: 
one can 
  define the expanding cone 
in any $\Ad$-diagonalizable group which normalizes an $\Ad$-unipotent subgroup; or  one can define the  expanding cone with respect to a fixed 
representation of $ H $. 
These generalisations may also have applications in homogeneous dynamics.

\subsection{Quantitative nonescape of mass}\label{sec;nonescape}
 In the setting of Example \ref{ex;main}, 
quantitative nonescape of mass for the $A_U^+$ action on $ H /SL_{m+n}(\Z)$
 was first studied  by Kleinbock  and Margulis \cite{km98}. 
Apart from  its applications in metric number theory, it is a useful tool  while studying 
equidistribution of measures on homogeneous space with respect to the action of diagonal elements. There are various generalizations of \cite{km98} and  most of them 
are in the setting of Example \ref{ex;main}, see e.g. \cite{klw04}, \cite{kw08} and \cite{dfsu}.
Using the  expanding cone we 
establish  quantitative nonescape of mass on general  homogeneous spaces for $ U $-slices. 

In this paper,  besides compatible  group structure and manifold structure we assume 
 the  Lie group  $G$ has two more structures, namely,  a right Haar measure $\mu_G$
 and 
 a right invariant Riemannian structure which induces a
 right invariant metric $d_G$. 
Different choices of these two structures will only affect some constants, so we may assume 
the compatibility among several groups  if needed.
 We use $B_r^G$ to denote the open  ball of radius $r$  in $G$
 centered at the identity element $1_G$.
For a lattice   $\Gamma $ of $G$ we use 
   $d_X$
to denote  the 
 induced metric  on
$X=G/\Gamma $ 
defined by 
\begin{align*}
d_X(g_1\Gamma , g_2\Gamma )=\inf_{\gamma\in \Gamma } d_G(g_1\gamma, g_2)\quad\mbox{where}\quad g_1, g_2\in G. 
\end{align*}
For every $x\in X$ we let $\pi_x: G\to X$ be the map defined by $g\to gx$. 
The injectivity radius  at $x\in X$ is defined by 
\begin{align*}
I(x, X)=\sup \{r\ge 0: \pi_x \mbox { is injective on } B_r^G\}
\end{align*}
which  measures the closeness  of  $x$ to $\infty$.
For every $\varepsilon >0$ the complement of the set
\begin{align}\label{eq;injective region}
\mathrm{Inj}_\varepsilon^X:=\{x\in X:  I(x, X)\ge \varepsilon\}
\end{align}
can be thought of the  $\varepsilon$-neighborhood of $\infty$ in $X$. 
The following simple observation may justify this  point of view: a sequence 
$\{x_i\}_{i\in \N}$ in X diverges if and only if for any $\varepsilon >0$ one has
$x_i \not \in \mathrm{Inj}_\varepsilon^X$ for all   sufficiently large $i$.

 

\begin{thm}\label{thm;general}
Let $G$ be a connected semisimple   Lie group containing 
 $ H $,  and let 
 $\Gamma $ be a lattice of  $G$. Then there exists $\delta=\delta(G, \Gamma, H)>0$ such that 
  for every compact subset $L$ of $ X=G/\Gamma $,
    for every $a\in \overline{A^+_U}$,  $x\in L$ and $r>0$ one has
\begin{align}\label{eq;better}
\mu_U(\{h\in B_r^{ U }: a hx\not\in \mathrm{Inj}_\varepsilon^X\}) \ll_{L,r}  \varepsilon ^\delta.
\end{align}  
\end{thm}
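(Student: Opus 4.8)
The plan is to reduce the statement to a quantitative nondivergence estimate of Kleinbock--Margulis type, using the expanding cone to control the relevant representations. First I would recall the standard dictionary between the injectivity radius and heights: for a semisimple $G$ with lattice $\Gamma$, there is a finite collection of representations $(\rho_j, V_j)$ of $G$ (coming from the arithmetic structure of $\Gamma$, or from the $\Q$-parabolic subgroups via Margulis' construction), together with distinguished vectors or lattices of vectors, such that $x \notin \mathrm{Inj}_\varepsilon^X$ is controlled by the event that $\rho_j(g)v$ becomes short (of norm $\ll \varepsilon^{c}$) for some $j$ and some vector $v$ in the associated discrete set, where $gx$ is a representative. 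This is the content of the results of Kleinbock--Margulis and Eskin--Margulis--Mozes (and its extension by Benoist--Quint / Dani--Margulis); I would cite the version that works for general $G/\Gamma$, e.g. the ``$(C,\alpha)$-good'' function formalism.

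Next I would observe that after passing to the simple factors of $H$, the hypothesis $a \in \overline{A_U^+}$ exactly says, by the definition of $a$-expanding and Theorem~\ref{thm;open}, that in every nontrivial irreducible $H$-subrepresentation $W \subset V_j$ the $U$-fixed subspace $W^U$ lies in the nonnegative part $W^{\ge 0}_a := W^+_a \oplus W^0_a$ (the closure $\overline{A_U^+}$ allows the eigenvalue $1$). The key dynamical input is then the following polynomial-growth / nondivergence lemma: for $v \in V_j$, the function $h \mapsto \|\rho_j(ah)v\|$ on $B_r^U$ is, up to uniform constants depending on $r$, either bounded below away from $0$ or, more precisely, the measure of $\{h \in B_r^U : \|\rho_j(ah)v\| < \eta\}$ is $\ll_r \eta^\delta \cdot (\text{something independent of }\eta)$ once $v$ is \emph{not} shrunk by $a$ on its $U$-fixed part. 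Concretely, decompose $v = v^U + v'$ where $v^U$ is the $U$-fixed component and $v'$ lies in a $U$-complement; the $U$-action on $v$ is polynomial in the coordinates of $h$, with the constant term $v^U$. Since $v^U \in W^{\ge 0}_a$ on each irreducible piece, $\rho_j(a)$ does not contract $v^U$, so $\|\rho_j(ah)v\|$ is a polynomial in $h$ whose ``value at $h$ large'' direction is genuinely expanded; the $(C,\alpha)$-good property of polynomials then gives the $\varepsilon^\delta$ bound uniformly over the compact set $L$ (which bounds the heights of the $v$'s one must consider) and over $a \in \overline{A_U^+}$ and $r$.

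Carrying this out, the steps in order are: (1) fix the finite family of representations and the height function $\Delta(x) = \min_j \min_{v} \log(1/\|\rho_j(g)v\|)$ comparable to $-\log I(x,X)$; (2) for $x \in L$, bound the heights of the finitely many relevant vectors $v$ that can become short within distance $r$ of the $U$-orbit; (3) for each such $v$ and each $a \in \overline{A_U^+}$, decompose $v$ along irreducibles and into $U$-fixed plus complement, and use that $a$ fixes or expands the $U$-fixed part to show $h \mapsto \rho_j(ah)v$ is a $V_j$-valued polynomial map of degree bounded in terms of $\dim H$ with a coordinate bounded below in norm; (4) apply the standard $(C,\alpha)$-goodness of polynomial maps together with a covering/Besicovitch argument over $B_r^U$ to get $\mu_U(\{h : \|\rho_j(ah)v\| < \eta\}) \ll_{r} \eta^{\delta'}$; (5) union over the finitely many $j$ and $v$, translate $\eta \sim \varepsilon^c$ back to the injectivity radius, and absorb constants depending on $L$ and $r$.

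The main obstacle is step (3)--(4): one must verify \emph{uniformly} over the closed cone $\overline{A_U^+}$ that the contraction of $\rho_j(a)$ on the $U$-complement $v'$ cannot conspire to cancel the non-contracted $U$-fixed part $v^U$ inside a ball of radius $r$ — i.e.\ that the polynomial $h \mapsto \rho_j(ah)v$ really does have a ``large'' coefficient of size $\gg \|v\|$ that is not itself shrunk. This is where the precise geometry of the cone $\mathfrak a_{\mathfrak u}^+$ in Theorem~\ref{thm;open} is used: the roots in $\Phi(\mathfrak u)$ are exactly those expanded, so on the $U$-fixed subspace every weight of $a$ is nonnegative, while the polynomial coordinates that push $v^U$ into the complement are moved by positive root directions, keeping them of size $\gg 1$ on $B_r^U$. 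A secondary technical point is handling non-semisimple $G$ or $H$ merely a subgroup (not normal) of $G$; here one restricts the $G$-representations to $H$, decomposes into $H$-isotypic pieces, and notes that trivial $H$-subrepresentations contribute vectors that are genuinely $H$-fixed hence fixed by $a$, so they never shrink, while nontrivial pieces are governed by the expanding-cone hypothesis as above.
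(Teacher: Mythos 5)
Your central representation-theoretic input is essentially the paper's Lemma \ref{lem;norm}: for $a\in\overline{A_U^+}$ the operator $\rho(a)$ does not contract the $U$-fixed subspace, and the $U$-orbit of any vector acquires a definite component there, giving $\sup_{h\in B_r^U}\|\rho(ah)v\|\gg\|v\|$ uniformly. One detail you should make explicit: your decomposition $v=v^U+v'$ does not suffice when $v^U=0$ (e.g.\ $v$ a lowest-weight vector); what is needed is that the polynomial map $h\mapsto\pi(\rho(h)v)$, $\pi$ the projection onto $V^U$, is not identically zero on $U$. The paper gets this from Engel's theorem applied to the $\mathfrak u$-module generated by $v$, followed by a compactness argument on the unit sphere. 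Your worry about ``cancellation'' in steps (3)--(4) dissolves once one projects onto $V^U$ first and uses that $\rho(a)$ commutes with $\pi$.

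The genuine gap is in steps (1) and (4): you assume that for an \emph{arbitrary} lattice $\Gamma$ in an arbitrary connected semisimple $G$ there is a finite family of representations with discrete sets of vectors for which the full Kleinbock--Margulis quantitative nondivergence theorem (the $(C,\alpha)$-good formalism) applies. That theorem requires the complete flag/sublattice structure of $\Z^n$ inside $\bigoplus_i\bigwedge^i\R^n$, and is genuinely an arithmetic statement; it is not available off the shelf for non-arithmetic lattices in rank-one groups. This is exactly why the paper first kills the center and compact factors and splits $X$ into an arithmetic factor $\SL_n(\R)/\SL_n(\Z)$ --- where your program goes through verbatim via Mahler's criterion, \cite[Lemma 3.2]{bkm} and \cite[Theorem 2.2]{k08} --- and finitely many rank-one factors. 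In the rank-one case the paper does something different: it uses the Garland--Raghunathan compactness criterion (which only provides a discrete set $\Delta$ with the ``at most one short vector'' property, not a flag structure), first translates the basepoint into a fixed compact set using the norm lemma, and then invokes the Buenger--Zheng nondivergence theorem for the conjugated unipotent slice $a_t(\exp u)a_{-t}$ together with polar coordinates. Without this case split (and the accompanying reduction Lemma \ref{lem;reduce} comparing injectivity radii across the quotient by $Z_GG^{\mathrm{cpt}}$), your step (4) has no theorem to appeal to in the non-arithmetic rank-one factors.
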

As usual the relation    $f_1\ll_* f_2$ for two nonnegative  functions   means $f_1 \le M f_2$ for some constant $M>0$ depending possibly  on variables in the set $*$. 
In this paper 
the dependence of the implied constants on metrics or Haar measures of Lie groups will not be specified. 
 Theorem \ref{thm;general} is an important ingredient for effective 
 multiple 
 equidistribution in \S \ref{sec;intro k mixing}.

It is interesting  to extend Theorem \ref{thm;general} to more general classes of measures on $U$ such as friendly measures  of 
\cite{klw04}  in
the setting of Example \ref{ex;main}. Although Theorem \ref{thm;general} looks restrictive due to 
the assumption that $G$ is connected and  semisimple, it actually provides  quantitative nonescape of mass
for an arbitrary  Lie group $S$ and a lattice $\Lambda$ in $S$. 
It is not hard to see that if $S$ is not connected, the quantitative nonescape of mass 
can be reduced to that of  $S^\circ/S^\circ \cap \Lambda$. 
Here and hereafter $S^\circ$ denotes the identity component of the Lie group. 
Suppose  $S$ is  connected, then  it follows from \cite[Lemma 6.1]{bq12} that 
there is a closed normal amenable subgroup $R$
of $S$ such that $S/R$ is a semisimple Lie group  without compact factors,  $R/\Lambda\cap R $ is compact
and $\pi(\Lambda)$ is a lattice of $S/R$ where $\pi: S\to S/R$ is the quotient map. 
Therefore, we can finally reduce quantitative nonescape of mass to the setting of Theorem \ref{thm;general}. 

\subsection{Translates of $ U $-slices  by elements of $A_U^+$}\label{sec;intro translate}
Let $\Gamma $ be a lattice of a Lie group $G$ containing  $H$ and let $X=G/\Gamma $. 
For every $x\in X$ 
it 
follows from Ratner's theorem \cite{r91},  which was conjectured by  Raghunathan,  that 
$\overline{Hx}$ (the closure of $Hx$ in $X$) is homogeneous, i.e. 
$\overline{Hx}=Sx$ for some closed subgroup $S$ of $G$, and there is  an $S$-invariant probability measure 
$\mu_{\overline{Hx}}$ supported on $\overline{Hx}$.  
  We say a sequence $\{a_i\}_{i\in \N}$ in $A_U^+$ drifts away from walls
if   ${d}_A( a_i, \partial A_U^+)\to \infty $
as $i\to \infty$, where $\partial A_U^+$ is the boundary of $A_U^+$ in $A$.

\begin{thm}\label{thm;translate}
Let $G$ be a Lie group  containing  $ H$, let  $\Gamma $ be a lattice of $G$,   and let $X=G/\Gamma $. 
 Let $\{a_i\}_{i\in \N}$ be a sequence 
in $A_U^+$ drifting away from  walls. Then for any $f\in L^1( U ), \psi \in C_c(X)$\footnotemark
\  and $x\in X$ one has
\begin{align}\label{eq;translate}
\lim_{i\to \infty}\int_{ U } f(h)\psi (a_ihx) \dd\mu_U (h)= \int_{ U }f \dd\mu_U\cdot \int_X\psi  \dd\mu_{\overline {Hx}}.
\end{align}
\end{thm}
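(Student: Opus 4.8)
The plan is to reduce Theorem~\ref{thm;translate} to an equidistribution statement for the expanding translates of the $U$-slice, and then to invoke Ratner's measure classification together with a linearization/nondivergence argument. First I would observe that, by a standard density argument, it suffices to prove \eqref{eq;translate} for $f$ continuous with compact support in $U$ and $\psi\in C_c(X)$; approximating a general $f\in L^1(U)$ by such functions in $L^1$-norm, and using that $\|\psi\|_\infty<\infty$ together with the uniform bound $\int_U |f(h)|\,|\psi(a_ihx)|\dd\mu_U(h)\le \|\psi\|_\infty\|f\|_1$ (the measure $\mu_U$ is $\Ad(a_i)$-quasi-invariant and the Jacobian of $h\mapsto a_i h a_i^{-1}$ on $U$ cancels after the change of variables), the error incurred is uniformly small. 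So fix such $f$ and $\psi$.

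The heart of the matter is then the following: the normalized pushforward measures $\nu_i$ on $X$, defined by $\nu_i(\psi)=\frac{1}{\int_U f\dd\mu_U}\int_U f(h)\psi(a_ihx)\dd\mu_U(h)$ (assume $\int_U f\dd\mu_U=1$ after scaling), converge weak-$*$ to $\mu_{\overline{Hx}}$. The first step is \emph{tightness}: I would use Theorem~\ref{thm;general} with a fixed compact $L\ni x$ (or rather a slight variant allowing the base point to range over the compact piece $\supp f\cdot x$, which lies in a fixed compact set) together with the fact that every $a_i$ lies in $\overline{A_U^+}$, to conclude $\nu_i(X\setminus \mathrm{Inj}_\varepsilon^X)\ll \varepsilon^\delta$ uniformly in $i$; hence $\{\nu_i\}$ is tight and every weak-$*$ limit $\nu$ is a probability measure on $X$ with no escape of mass. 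The second step is to identify any such limit $\nu$. Here is where ``drifting away from walls'' enters: writing $a_i=\exp(s_i)$ with $s_i\in\mathfrak a_{\mathfrak u}^+$, the condition $d_A(a_i,\partial A_U^+)\to\infty$ forces every root $\alpha\in\Phi(\mathfrak u)$ to satisfy $\alpha(s_i)\to+\infty$, so that $\Ad(a_i)$ expands every root direction in $\mathfrak u$ without bound. A Margulis-type thickening argument then shows $\nu$ is $U$-invariant: for any $u\in U$, $u\cdot(a_ihx)=a_i\cdot(a_i^{-1}ua_i h)x$ with $a_i^{-1}ua_i\to 1_U$, so the difference $\nu_i(\psi(u\,\cdot\,))-\nu_i(\psi)$ tends to $0$ by uniform continuity of $\psi$ and the dominated convergence theorem.

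Once $\nu$ is $U$-invariant I would apply Ratner's theorem and the linearization technique of Dani--Margulis / Mozes--Shah: $\nu$ is a convex combination of algebraic (homogeneous) measures, each invariant under a subgroup containing $U$, and the nondivergence estimate of Theorem~\ref{thm;general} rules out concentration on proper ``singular'' subvarieties, so $\nu$ is supported on $\overline{Ux}\subseteq\overline{Hx}=Sx$. To upgrade $U$-invariance of $\nu$ to $S$-invariance (equivalently $H$-invariance, since $U$ generates a subgroup whose Ratner closure contains $H$ acting ergodically on $\overline{Hx}$ — more precisely $\overline{Hx}$ being $H$-homogeneous means the only $U$-invariant ergodic measures on it that are not supported on proper subsets are $H$-invariant), one combines ergodicity of the $H$-action on $(\overline{Hx},\mu_{\overline{Hx}})$ with the fact that the limit does not charge any intermediate orbit closure; this is again where nondivergence is used, to exclude the lower-dimensional pieces in the Mozes--Shah decomposition. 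Therefore $\nu=\mu_{\overline{Hx}}$, every subsequential limit equals $\mu_{\overline{Hx}}$, and \eqref{eq;translate} follows. The main obstacle I anticipate is the linearization step: carefully verifying, via the polynomial-behavior estimates on $U$, that no mass of $\nu_i$ escapes into neighborhoods of the proper subgroups supplied by Ratner's theorem — in other words, promoting the qualitative $U$-invariance of limits to the identification with the full measure $\mu_{\overline{Hx}}$ uniformly, rather than merely along the orbit of a single point. This requires knowing that ``drifting away from walls'' makes the $U$-direction expand strictly in \emph{every} relevant representation (which is exactly the content of $a_i\in A_U^+$ being deep inside the cone, by Theorem~\ref{thm;open}), so that the classical arguments in the $\SL_{m+n}$ case of Example~\ref{ex;main} transfer verbatim.
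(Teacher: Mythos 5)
Your proposal re-derives from scratch what the paper proves with a short citation: the paper invokes Shah--Weiss \cite[Theorem 1.4]{sw96}, observes that the only hypothesis of that theorem actually used in its proof is their growth condition (3.13) on weights in nontrivial representations of $H$, and verifies (3.13) from the drifting-away-from-walls assumption via Corollary~\ref{cor;drift away}. That is the entire proof. You instead re-run the Shah--Weiss machinery by hand (tightness via nonescape of mass, $U$-invariance of weak-$*$ limits, Ratner plus Dani--Margulis linearization and Mozes--Shah). This route can be made to work --- it is essentially what \cite{sw96} does --- but you leave precisely the hardest part unfinished: you flag the linearization/identification step as ``the main obstacle I anticipate'' and assert without argument that the $\SL_{m+n}$ case ``transfers verbatim.'' That step is exactly where the representation-theoretic content of the expanding cone is consumed (it is what Corollary~\ref{cor;drift away} encodes and what (3.13) of \cite{sw96} is about), so leaving it as an anticipated obstacle means the proposal has not actually proven the theorem.

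Two further concrete problems. First, your $U$-invariance argument fails as stated. Writing $\psi(ua_ihx)=\psi(a_i u_i h x)$ with $u_i=a_i^{-1}ua_i\to 1_U$ is correct, but then you claim the difference with $\psi(a_ihx)$ vanishes ``by uniform continuity of $\psi$.'' It does not: $a_i u_i h x = u\cdot(a_ihx)$ sits at a \emph{fixed} distance determined by $u$ from $a_ihx$, no matter how small $u_i$ is, so uniform continuity of $\psi$ gives nothing. The correct step is to translate the variable: by unimodularity of $U$, $\int_U f(h)\psi(a_i u_i h x)\dd\mu_U(h)=\int_U f(u_i^{-1}h)\psi(a_ihx)\dd\mu_U(h)$, and the error is bounded by $\|\psi\|_\infty\,\|f(u_i^{-1}\cdot)-f\|_{L^1(U)}\to 0$ by continuity of translation in $L^1$. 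Second, Theorem~\ref{thm;general} as stated requires $G$ to be connected and semisimple, while Theorem~\ref{thm;translate} allows an arbitrary Lie group $G$ containing $H$; invoking Theorem~\ref{thm;general} directly for tightness requires the reduction to the semisimple case discussed at the end of \S\ref{sec;nonescape}, which your proposal omits.
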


\footnotetext{
We use $C_c(Y)$ and $C^\infty(Y)$ to denote the real valued compactly supported  and smooth functions
on the space $Y$, respectively. The notion $L^p(Y)\ (p>0)$  is the  usual complex valued $L^p$-space with respect to certain fixed volume measure on $Y$.}
Certain  cases of Theorem \ref{thm;translate} were obtained earlier  by Mohammadi and Salehi-Golsefidy \cite{ms}.
More precisely, for $H=\mathbf H(\R)^\circ$ where $\mathbf H$ is a simply connected and 
$\Q$-simple algebraic group 
with $\rk_{\Q}\mathbf H=\rk_{\R}\mathbf H$, 
\cite{ms} proves Theorem \ref{thm;translate} in the 
case where $G=H$, $\Gamma=\mathbf H(\Z)\cap H$,   $U$ is the unipotent radical of 
a minimal  parabolic subgroup defined over $\Q$, $x\in X$ with $Ux$ closed and $a_i\ (i\in \N)$ belong to a maximal $\Q$-split torus. In \S \ref{sec;intersect}, we explain how Theorem \ref{thm;translate} applies to  this case  without assuming $\rk_{\Q}\mathbf H=\rk_{\R}\mathbf H$.

Our proof of Theorem \ref{thm;translate},
which is given at the end of  \S \ref{sec;property},  is based on the property of the  expanding cone and 
Shah-Weiss  \cite[Theorem 1.4]{sw96}.
 The main new point   here  is that our cone $A_U^+$ is global (not depending on the adjoint representation of $H$ on the Lie algebra of $G$) and is  explicitly described.

If we assume in addition that conjugation by the   $a_i\ (i\in \N)$ expands a horospherical subgroup of $H$ contained in $U$, then  we can  make
(\ref{eq;translate}) effective.
 This will be the topic of \S \ref{sec;intro k mixing}. 
 The properties  of the expanding cone are used in different ways in effective and ineffective results. 
 The proof of Theorem \ref{thm;translate} uses Ratner's theorem on measure classification of 
 $U$-invariant probability measures and the expanding cone helps us to avoid other $U$-invariant and ergodic probability  measures in
 the limit. In the effective case the rate of convergence comes from the  spectral gap, and the  expanding 
 cone is used to control the behaviour of the measure near the cusp through 
quantitative nonescape of mass in Theorem \ref{thm;general}.

\subsection{Effective multiple  equidistribution}\label{sec;intro k mixing}
Suppose 
 $ H $ is  a normal subgroup of a connected
 semisimple Lie group   $G$. Let    $\Gamma$ be a   lattice of  $G$ and 
  $\mu_X$ be 
the probability Haar measure on $X=G/\Gamma$.
 We assume that the action of $ H $ on $X$ has a spectral gap, 
 i.e.~there is a compactly supported probability measure $\nu$
 on $H$ and $\delta >0$ such that 
 \[
\left \| \int_H  \psi (g^{-1} x ) \dd\nu(g)\right \|_{L^2(X)}\le (1-\delta)\|\psi \|  _{L^2(X)}
 \]
  for any 
 $\psi\in L^2_0(X)
:=\{\psi \in L^2(X): \int_X\psi \dd\mu_X=0\}$.  
Recall that a lattice  $\Gamma$ is said 
to be irreducible if,  for any connected noncompact normal subgroup $N$ of $G$, the projection of $\Gamma$ to the  quotient group $G/N$ is dense. 
 If $G$ has no compact factors
and  $\Gamma$ is irreducible,
 then the action of $ H $ on $X$ has a spectral gap, see \cite{ks}. 
 Suppose  
$G$ has finite center and there is    an almost direct product  decomposition  $G=G_1\cdots G_k$ by connected normal subgroups $G_i$ such that 
$\Gamma\cap G_i$ is a lattice of $G_i$. 
Then  the action of $H$ on $X$ has a spectral gap
if and only if 
the action of $(H\cap G_i)^{\circ}$ on  the quotient 
$G_i/\Gamma\cap G_i$ has a spectral gap for every $i$.

Let $\mathcal S(X)=\R +C_c^\infty(X)$ be the space of real valued  smooth functions on $X$ which can 
be written as the sum of a compactly supported function and a constant function. 
The aim of   this section is  to  give an effective  estimate of  the integral 
\begin{align*}
I_{f}({\bx}; \ba; \Psi)
=\int_{ U } f(h) \prod_{i=1}^k \psi _i( a_i \cdots a_1h x_i)\dd \mu_U (h)
\end{align*}
where 
\begin{align}
&f\in  C_c^\infty ( U )
 &&  \Psi=(\psi _1, \ldots, \psi _k)\in 
\mathcal S(X) ^k,
\label{eq;notation}\\
 &\ba=(a_1,\ldots, a_k)\in A^k,  && \bx=(x_1, \ldots, x_k)\in X^k.\label{eq;notation1}
\end{align}

 Let  $\{ b_t : t\in \R\}$ be
 a one parameter   subgroup  of $A$ such that the projection 
of $b=b_1$ to each simple factor of $H$ is not neutral element and
the unstable horospherical subgroup of $b$
\[
H _{b}^+:=\{h\in H: b^{-n} hb^{n}\to 1_H \quad \mbox{as }n\to \infty\}\le U.
\]  In particular 
this implies that  the positive ray $\{ b_t : t> 0\}$ is contained in $A_U^+$ (see \cite[Lemma 5.2]{s96}).
 For $a\in A$ and $\ba$ as in (\ref{eq;notation1}) we set 
\begin{align}
\lfloor a\rfloor&= \sup \{t\ge 0:  ab_{-t}\in A_U^+, H_b^+\le  H ^+_{ab_{-t}}\},  
\label{eq;a floor}\\
\lfloor \ba \rfloor&=\min \{\lfloor a_i\rfloor: {1\le i\le k}\}\notag.
\end{align}
In the definition of (\ref{eq;a floor}) if  the set before taking the supremum is empty 
 we interpret
$\lfloor a \rfloor=0$.  In view of  $b\in A_U^+$, we must 
have $a\in A_U^+$ and $H_b^+\le H_a^+$ if     $\lfloor a\rfloor>0$.  

\begin{thm}\label{thm;effective}
Let  $H$ be a normal subgroup  of a connected semisimple Lie group $G$, let $\Gamma$ be a  lattice  of $G$, and let $X=G/\Gamma$.
Suppose that the action of $ H $ on $X$ has a spectral gap.  
Let $\{b_t\}$ be a
 one parameter subgroup 
 of $A$  such that the projection 
of $b=b_1$ to each simple factor of $H$ is not the identity element and
$ H _{b}^+\le U$.
There exists $\delta=\delta( H, X,   \{b_t\} )>0$ with the following properties:
For every positive integer $k$,  any compact subset $L$ of $ X $ and any 
$f, \Psi $ as in (\ref{eq;notation}),
  there exists $M=M(L, f, \Psi , k)>0$
such that for any $\bx, \ba$ as in (\ref{eq;notation1}) with $\bx\in L^k$ one has 
\begin{align}\label{eq;effective goal}
\left |I_f(\bx; \ba; \Psi )-\int_U f\dd\mu_U \cdot\prod_{i=1}^k \int_X \psi _i\dd\mu_X\right|
\le M e^{-\delta \lfloor \ba \rfloor} . 
\end{align}
\end{thm}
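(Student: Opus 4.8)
The plan is to combine three inputs: quantitative nonescape of mass (Theorem~\ref{thm;general}), exponential decay of matrix coefficients of the one--parameter flow $\{b_t\}$ on $L^2_0(X)$ (which follows from the spectral gap hypothesis together with the Howe--Moore theorem, using that $b$ projects nontrivially to every simple factor of $H$), and a careful accounting of how much the diagonal elements $g_i:=a_i\cdots a_1$ expand the horospherical subgroup $H_b^+$. Since $A$ is abelian and normalizes $U$, each $g_i$ lies in $A$, normalizes $U$, and $g_ihx_i=(g_ihg_i^{-1})(g_ix_i)$. We may assume $\lfloor\ba\rfloor\ge 1$ (otherwise the left side of \eqref{eq;effective goal} is $\le 2\|f\|_\infty\prod_i\|\psi_i\|_\infty$ and there is nothing to prove), so each $\lfloor a_i\rfloor\ge 1$ and we may write $a_i=\hat a_i\,b_{\lfloor a_i\rfloor}$ with $\hat a_i\in A_U^+$ and $H_b^+\le H^+_{\hat a_i}$; thus for every root $\alpha\in\Phi(\mathfrak u)$ with $\alpha(\log b)>0$ (these are exactly the roots occurring in $H_b^+$) one has $\alpha(\log\hat a_i)>0$, whence $\alpha(\log g_i)$ is strictly increasing in $i$ and
\[
\alpha(\log g_k)-\alpha(\log g_i)=\sum_{j=i+1}^{k}\alpha(\log a_j)\ \ge\ (k-i)\,\lfloor\ba\rfloor\,\alpha(\log b)\ >\ 0 .
\]
Hence $g_k$ expands $H_b^+$ in \emph{every} direction, and does so by a factor at least $e^{c\lfloor\ba\rfloor}$ larger than any $g_i$ with $i<k$; this quantitative gap between the outermost element and all the others is the structural fact that drives the whole argument. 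Finally, since no root of $\Phi(\mathfrak u)$ is contracted by $b$ (as $\Phi(\mathfrak u)$ contains no root together with its negative), we have a direct product decomposition $U=H_b^+U_0$ with $H_b^+\trianglelefteq U$, where $U_0$ is generated by the root groups on which $b$ acts trivially; both factors are normalized by $A$, and $b_t$ centralizes $U_0$.

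\textbf{Base case $k=1$.} Write $h=vu_0$ with $v\in H_b^+$, $u_0\in U_0$, and $a=\hat a\,b_{\lfloor a\rfloor}$; since $b_{\lfloor a\rfloor}$ centralizes $U_0$ one gets $ahx=(ava^{-1})z_0$ with $z_0=z_0(u_0)=(\hat au_0\hat a^{-1})(ax)$. Substituting $v\mapsto a^{-1}va$ and using that $a$ expands $H_b^+$ in every direction (so that $v\mapsto f(a^{-1}va\,u_0)$ is slowly varying, at scale $e^{c\lfloor a\rfloor}$), the inner integral over $v$ becomes an average of $\psi$ over a large piece of the $b$--unstable horosphere $H_b^+z_0$. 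Its effective equidistribution is the standard thickening argument powered by exponential decay of matrix coefficients, with the dependence on the injectivity radius of $z_0$ and of the piece supplied by Theorem~\ref{thm;general} applied to the slice $h\mapsto g_1hx$ (with $g_1\in A_U^+\subseteq\overline{A_U^+}$, $x\in L$). Integrating over $u_0$ and optimising the injectivity threshold $\varepsilon$ as a small power of $e^{-\lfloor a\rfloor}$ yields \eqref{eq;effective goal} for $k=1$; one records here, as a lemma, the more general statement used below, namely effective equidistribution of $\psi$ over a large and possibly highly anisotropic box in $H_b^+$ translated to a point $z\in\mathrm{Inj}_\varepsilon^X$ (obtained from the same argument after conjugating the box into the form $b_t\cdot(\text{unit box})\cdot b_{-t}$ with $t\asymp\log(\text{size})$).

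\textbf{Inductive step.} Fix $u_0$ and consider the inner integral over $v$ in the bounded set $\{v\in H_b^+:vu_0\in\operatorname{supp}f\}$, where $g_ivu_0x_i=(g_ivg_i^{-1})z_i$ with $z_i=g_iu_0x_i$. Partition this set into boxes adapted to $g_{k-1}$, of side $\asymp e^{-\alpha(\log g_{k-1})-\eta\lfloor\ba\rfloor}$ in the $\alpha$--direction for each root $\alpha$ of $H_b^+$, where $\eta>0$ is a small constant depending only on $\{b_t\}$. On each box: (i) because $\alpha(\log g_i)\le\alpha(\log g_{k-1})$ for $i<k$ and $f,\psi_1,\dots,\psi_{k-1}$ are Lipschitz on $X$, the functions $f(vu_0)$ and $\prod_{i<k}\psi_i(g_ivg_i^{-1}z_i)$ are constant up to $O_{f,\Psi}(e^{-\eta\lfloor\ba\rfloor})$; (ii) by the gap inequality, $g_k$ sends the box to a box every side of which is $\ge e^{(\lambda_0-\eta)\lfloor\ba\rfloor}$ (where $\lambda_0=\min_\alpha\alpha(\log b)>0$), so the lemma of the base case replaces $\int_{\text{box}}\psi_k(g_kvg_k^{-1}z_k)\,dv$ by $(\text{vol of box})\cdot\int_X\psi_k\,d\mu_X$ with relative error $O(e^{-\delta'\lfloor\ba\rfloor})$, uniformly over boxes meeting $\{h\in B^U_r:g_khx_k\in\mathrm{Inj}_\varepsilon^X\}$; the remaining boxes contribute $\ll\varepsilon^{\delta_0}$ in total by Theorem~\ref{thm;general}. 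Summing over boxes --- their number is large, but this is harmless since we estimate an integral and every relative error is a fixed power of $e^{-\lfloor\ba\rfloor}$ --- integrating over $u_0$, and optimising $\varepsilon$, one obtains
\[
I_f(\bx;\ba;\Psi)=\Big(\int_X\psi_k\,d\mu_X\Big)\,I_f(\bx^{<k};\ba^{<k};\Psi^{<k})+O\big(e^{-\delta\lfloor\ba\rfloor}\big),
\]
with $\bx^{<k}=(x_1,\dots,x_{k-1})$, $\ba^{<k}=(a_1,\dots,a_{k-1})$, $\Psi^{<k}=(\psi_1,\dots,\psi_{k-1})$ and $\lfloor\ba^{<k}\rfloor\ge\lfloor\ba\rfloor$. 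The inductive hypothesis applied to $I_f(\bx^{<k};\ba^{<k};\Psi^{<k})$ and telescoping complete the proof; at every stage $\delta$ may be taken to be a fixed fraction of $\min(\lambda_0,\delta',\delta_0)$, which depends only on $H,X,\{b_t\}$, so the final $\delta$ does not depend on $k$, while $M$ is built by a finite recursion on $k$.

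\textbf{Main obstacle.} The technical heart is the inductive step: the boxes must be chosen fine enough that $\psi_1,\dots,\psi_{k-1}$ are genuinely constant on each of them (whence the margin $\eta\lfloor\ba\rfloor$), yet coarse enough that $g_k$ still stretches each box into a horosphere piece large enough for equidistribution to produce a useful rate --- the gap inequality is exactly what makes both demands simultaneously satisfiable, with slack proportional to $\lfloor\ba\rfloor$. Beyond this, two points require care: (a) establishing effective equidistribution of large, in general highly anisotropic, horosphere pieces, with the rate depending quantitatively (through Theorem~\ref{thm;general}) rather than merely qualitatively on the distance of the base point to the cusp; and (b) the non--abelian structure of $U=H_b^+U_0$, which introduces Baker--Campbell--Hausdorff corrections in the conjugations and changes of variables used throughout --- these are lower order but must be tracked to confirm they do not spoil the exponential rate. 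Finally, one should verify carefully that the spectral gap of the $H$--action yields exponential decay of matrix coefficients for the single flow $\{b_t\}$ on $L^2_0(X)$ even when $G$ is strictly larger than $H$; this is routine (Howe--Moore together with the gap), but it is precisely the quantitative substitute for the qualitative mixing underlying Theorem~\ref{thm;translate}.
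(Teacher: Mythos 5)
Your overall architecture matches the paper's: peel off $\psi_k$ using mixing for $\{b_t\}$ (the paper's Lemma \ref{lem;usual mixing}, from the spectral gap), control the excursions into the cusp by Theorem \ref{thm;general} applied to $a_k'a_{k-1}\cdots a_1\in\overline{A_U^+}$, exploit that every $a_i$ (and $a_kb_{-\lfloor\ba\rfloor}$) expands every root direction of $H_b^+$, and iterate the identity $I_f(\bx;\ba;\Psi)=\big(\int_X\psi_k\big)I_f(\bx^{<k};\ba^{<k};\Psi^{<k})+O(e^{-\delta\lfloor\ba\rfloor})$ with $\delta$ independent of $k$. The structural observations (the gap inequality, $H_b^+\trianglelefteq U$ with $b$ centralizing the complement, $\lfloor\ba^{<k}\rfloor\ge\lfloor\ba\rfloor$) are all correct.

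However, the analytic engine you rely on --- effective equidistribution of a \emph{large, highly anisotropic} box in $H_b^+$ based at a point of controlled injectivity radius --- is left unproved, and it is needed already in your base case, not only in the inductive step. Your parenthetical justification, ``conjugating the box into the form $b_t\cdot(\text{unit box})\cdot b_{-t}$ with $t\asymp\log(\text{size})$,'' fails: conjugation by $b_{-t}$ scales the $\alpha$-direction by exactly $e^{-t\alpha(\log b)}$, so only boxes whose log-side-lengths are proportional to the fixed vector $(\alpha(\log b))_\alpha$ can be normalized this way. The boxes your construction produces have sides $e^{\alpha(\log a_k)-\alpha(\log g_{k-1})-\eta\lfloor\ba\rfloor}$ (or $e^{\alpha(\log a)}$ in the base case), whose ratios are dictated by an arbitrary element of $A_U^+$ and are not matched to $b$. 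Repairing this requires tiling the residual box by unit pieces, which reintroduces the injectivity-radius problem at each tile's base point and hence another layer of nonescape estimates; it is doable but is a genuinely missing argument. The paper avoids the issue entirely by never expanding the bump: it places a smooth $\theta$ on the \emph{fixed} small ball $B_r^{G^+}$, performs the change of variables $h\mapsto g_0h$ on $U$ where $g_0=(a_k'a_{k-1}\cdots a_1)^{-1}g(a_k'a_{k-1}\cdots a_1)$ is \emph{contracted} (this is exactly where $H_b^+\le H_{a_i}^+$ is used, see (\ref{eq;d 1gi})), so that $f$ and $\psi_1,\dots,\psi_{k-1}$ move by $O(r)$ while the $\psi_k$-factor becomes $\int_{G^+}\theta(g)\psi_k(b_tg\,y(h))\dd g$ with $y(h)=a_k'\cdots a_1hx_k$ --- an isotropic $b_t$-push of a fixed small ball, handled by the standard thickening Lemma \ref{lem;km 12 general}. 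I recommend either adopting that change of variables or supplying a complete proof of your anisotropic-box lemma before the argument can be considered closed.
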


We only get  nontrivial estimates in Theorem \ref{thm;effective}
when     
all the $a_i \ (1\le i\le k)$ belong
to 
\[
\{a\in A_U^+: H_b^+ \le H_{a}^+ \}
\]
which is a nonempty open cone. 
Theorem \ref{thm;effective} is proved by an inductive argument in  Theorem \ref{prop;explicit} which is also  used to prove the  multiple ergodic theorem in the appendix and the  central limit theorem in \cite{shi}. 
In applications it is useful to know the explicit dependence of $M$ on $\Psi$ and $k$. We will make this calculation during the proof in \S \ref{sec;effective}. 

It follows from the structure theory of parabolic subgroups of $H$   that $H_b^+$ is the unipotent radical of a  parabolic subgroup containing $P$
and there are only finitely many of them. 
Although given a parabolic subgroup $P_0\ge P$ there might be many choices of $\{b_t\}$ such that 
the unipotent radical of $P_0$ is equal to $H_b^+$, different choices will only affect the exponent 
$\delta$ in Theorem \ref{thm;effective}.  

Theorem \ref{thm;effective} generalizes Kleinbock-Margulis \cite[Theorem 1.3]{km12} and
Kleinbock-Shi-Weiss \cite[Theorem 1.2]{ksw}. 
Here we briefly explain the ideas involved in the proof which is due  to Kleinbock-Margulis \cite{km12}.
We chop the $U$-slice into pieces of $H_{b}^+$-slices and 
for most  $H_b^+$-slices which are not far away in the cusp we use the thickening method and spectral gap 
to get a rate. 
Using the assumption that $a_i\in A_U^+$ we can apply the quantitative nonescape of mass in Theorem \ref{thm;general}  to control effectively   the total mass of  pieces of $H_b^+$-slices possibly far away 
in the cusp.

In a recent paper of Dabbs, Kelly and Li \cite{dkl}, it is noticed that when  $k=1$ and $f$ is  the 
characteristic function of  a fundamental domain of a  periodic orbit  $Ux_1$, then 
there are still nontrivial estimates without assuming  $H_b^+\le H_{a_1}^+\cap U$.   
The following theorem generalizes \cite[Theorem 2]{dkl}.
\begin{thm}\label{thm;periodic}
	Let the notation and assumptions be as in Theorem \ref{thm;effective}.
We assume in addition that $U=H_b^+$. 
	 Then there exists 
	$\delta=\delta(H, X, \{b_t \})>0$ with the following properties:
	For any compact subset $L$ of $X$ and any $\psi\in \mathcal S(X)
	$, there exists $M=M(L, \psi)>0$
  such that for any $x\in L$ with $Ux$ periodic  and $a\in A_U^+$ one has
\begin{align}
\left |\int_F \psi(ahx)\dd\mu_U(h)-\mu_U(F)  \int_X \psi \dd\mu_X\right|
\le M e^{-\delta \lfloor a \rfloor'},
\end{align}
where $F\subset U$ is a fundamental domain of the periodic orbit $Ux$ and 
\[
\lfloor a \rfloor'=\sup\{t\ge 0: ab_{-t}\in A_U^+ \}. 
\]
\end{thm}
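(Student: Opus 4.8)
The plan is to run the proof of Theorem \ref{thm;effective} in the case $k=1$, but to exploit the two extra hypotheses --- that $f=\mathbbm{1}_F$ is the indicator of a \emph{full} fundamental domain of a \emph{periodic} orbit, and that $U=H_b^+$ --- in order to dispense with the requirement ``$H_b^+\le H_a^+$'' and to replace $\lfloor a\rfloor$ by the larger quantity $\lfloor a\rfloor'$. The point is that the part of $a$ which fails to expand $U$ can be absorbed into the periodic orbit rather than being lost.

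First I would write $t=\lfloor a\rfloor'$ and $a_0=ab_{-t}$, so that $a_0\in\overline{A_U^+}$ and $a=a_0b_t=b_ta_0$ (recall that $A$ is abelian). Since $A$ normalizes $U$, for $h\in U$ the point $a_0hx$ equals $(a_0ha_0^{-1})(a_0x)$ with $a_0ha_0^{-1}\in U$, and the automorphism $h\mapsto a_0ha_0^{-1}$ scales $\mu_U$ by $e^{\tau_0}$, where $\tau_0:=\mathrm{tr}\,(\mathrm{ad}(\log a_0)|_{\mathfrak u})$. A change of variables then gives
\begin{align}\label{eq;periodic reduction}
\int_F\psi(ahx)\dd\mu_U(h)=e^{-\tau_0}\int_{F_0}\psi\bigl(b_th_0(a_0x)\bigr)\dd\mu_U(h_0),\qquad F_0:=a_0Fa_0^{-1},
\end{align}
where $F_0$ is a fundamental domain of the periodic orbit $U(a_0x)=a_0Ux$ and $\mu_U(F_0)=e^{\tau_0}\mu_U(F)$. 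Thus the statement is reduced to effective equidistribution, with rate $e^{-\delta t}$, of the $\{b_t\}$-translate of the periodic $U$-orbit through $a_0x$; it is precisely here that $U=H_b^+$ is used, so that $\{b_t\}$ genuinely expands $U$ and, after the further reparametrisation $h_0\mapsto b_th_0b_{-t}$, the right-hand side of \eqref{eq;periodic reduction} becomes (up to a scalar factor) a line integral of $\psi$ over a much longer closed $U$-orbit.

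To estimate that integral I would proceed as in Kleinbock--Margulis: cover $F_0$ by $\asymp\mu_U(F_0)/\mu_U(B_\varepsilon^U)$ translates of $B_\varepsilon^U$; on a piece whose base point $y$ satisfies $I(y,X)\ge2\varepsilon$, thickening $\psi$ by $B_\varepsilon^G$ together with exponential mixing of $\{b_t\}$ on $L^2_0(X)$ --- available since the $H$-action has a spectral gap and $b$ projects nontrivially to every simple factor of $H$ --- gives the expected contribution with an error that becomes small once $t$ is large relative to $\log(1/\varepsilon)$; the pieces whose base point lies within $3\varepsilon$ of the cusp are handled by bounding their total $\mu_U$-measure. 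After undoing $h_0=a_0ha_0^{-1}$ this last measure equals $e^{\tau_0}\mu_U(\{h\in F:\ a_0hx\notin\mathrm{Inj}_{3\varepsilon}^X\})$; since $a_0\in\overline{A_U^+}$, $x\in L$, and a fundamental domain of a periodic $U$-orbit meeting $L$ can be taken inside a fixed ball $B_{R(L)}^U$, Theorem \ref{thm;general} bounds it by $\ll_{L}e^{\tau_0}\varepsilon^\delta$. Choosing $\varepsilon$ as a suitable negative power of $e^{t}$, dividing \eqref{eq;periodic reduction} by $e^{-\tau_0}$, and keeping track of the $e^{\tau_0}$ factors and of the volume of the pushed orbit then yields the bound $M(L,\psi)\,e^{-\delta\lfloor a\rfloor'}$; for bounded $t$ the inequality is immediate.

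The main obstacle, and the only genuinely new ingredient beyond the proof of Theorem \ref{thm;effective}, is the identity \eqref{eq;periodic reduction}. It is exactly here that periodicity enters --- the automorphism $h\mapsto a_0ha_0^{-1}$ merely permutes a fundamental domain of $U(a_0x)$, so the directions of $U$ that $a_0$ \emph{contracts} wrap around the orbit instead of being lost --- and where the hypothesis $U=H_b^+$ is used, since only after moving $a_0$ past $b_t$ are we pushing by an element that expands all of $U$, as the thickening method demands. The remaining delicate point is the uniformity in $x\in L$ of the constant $M$: it requires controlling, in terms of $L$ alone, the $\mu_U$-geometry of $F$ (so that Theorem \ref{thm;general} applies with $r=R(L)$) and, more subtly, the way the cusp error is weighed against the volume of the pushed periodic orbit.
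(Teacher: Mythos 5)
Your proposal is correct and follows essentially the same route as the paper: both absorb the non-expanding part $a_0=ab_{-\lfloor a\rfloor'}$ of $a$ into the periodic orbit via the Dabbs--Kelly--Li observation that a translate of a fundamental domain is again a fundamental domain of the periodic integrand, then apply the thickening/mixing argument to the remaining $b_t$ (this is where $U=H_b^+$ enters) and control the cusp excursions with Theorem \ref{thm;general} applied to $a_0\in\overline{A_U^+}$. The only cosmetic difference is that you implement the absorption by conjugating $F$ to $a_0Fa_0^{-1}$ before thickening, whereas the paper thickens first and uses the identity $\int_{g_0^{-1}F}=\int_F$ for left translates of the fundamental domain.
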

It is not known whether there are effective estimates  or not
in the setting of 
Theorem \ref{thm;periodic} for $k\ge 2$. 
But the current method has obvious obstructions which we will explain in the proof.

As an application of effective double equidistribution (the case where $k=2$) we obtain the following result which gives an error rate of  pointwise 
equidistribution.

\begin{cor}\label{cor;effective}
Let the notation and assumptions  be as in Theorem \ref{thm;effective}.  
Let   $ s \in \ap^+$ (see (\ref{eq;cone}))   such that $H_b^+\le U\cap H_{\exp s}^+$. 
Then given 
$\psi \in  \mathcal S(X)
, x\in X$ and   $\varepsilon>0$,   one has 
 \begin{align}\label{eq;pointwise}
\frac{1}{T}\int_0^T \psi (\exp (t\,  s )hx )\dd t=\int_X\psi \dd \mu_X+o(T^{-\frac{1}{2}}\log^{\frac{3}{2}+\varepsilon} T)
\end{align}
 for  $\mu_U$ almost every $h\in U$. 
\end{cor}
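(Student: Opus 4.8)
The plan is to deduce the pointwise equidistribution along the ray $\exp(t\,s)$ from effective \emph{double} equidistribution, i.e.\ the $k=2$ case of Theorem \ref{thm;effective}, via a standard second-moment (variance) argument combined with a Borel--Cantelli / Chebyshev estimate and a dyadic-block interpolation. Concretely, write $a_t=\exp(t\,s)$; by hypothesis $s\in\ap^+$ and $H_b^+\le U\cap H_{a_1}^+$, so the whole ray $\{a_t:t>0\}$ lies in $\{a\in A_U^+: H_b^+\le H_a^+\}$, the nonempty open cone on which Theorem \ref{thm;effective} gives nontrivial bounds. First I would fix $\psi\in C_c^\infty(X)$ and $x\in X$, put $\Psi=\psi-\int_X\psi\dd\mu_X$ so that $\int_X\Psi\dd\mu_X=0$, and set
\[
F_T(h)=\frac{1}{T}\int_0^T \Psi(a_t h x)\dd t .
\]
The goal is to show $F_T(h)\to 0$ at rate $o(T^{-1/2}\log^{3/2+\varepsilon}T)$ for $\mu_U$-a.e.\ $h$. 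Since the integrand is continuous and everything is local in $U$, it suffices to work on a fixed precompact open $U$-slice $B=B_R^U$ and show the a.e.\ statement there.

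The key computation is the estimate of $\int_B F_T(h)^2\dd\mu_U(h)$. Expanding the square gives a double integral $\frac{1}{T^2}\int_0^T\int_0^T\int_B \Psi(a_t h x)\,\overline{\Psi(a_{t'}h x)}\dd\mu_U(h)\dd t\dd t'$; after using the group law to write $a_t=a_{t-t'}a_{t'}$ one sees the inner $h$-integral is exactly of the form $I_f(\bx;\ba;\Psi')$ with $k=2$, $f=\mathbbm 1_B$, $x_1=x_2=x$, $a_1=a_{t'}$, $a_2=a_{t-t'}$, and $\Psi'=(\Psi,\Psi)$ (with one factor conjugated, harmless since $\Psi$ is real after the reduction above). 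Applying the $k=2$ case of Theorem \ref{thm;effective} with the compact set $L=\{x\}$ (or a fixed compact neighbourhood of the orbit segment) replaces this inner integral by $\big(\int_B\Psi\big)\cdot\big(\int_X\Psi\big)^2=0$ up to an error $\ll_{B,\psi} e^{-\delta\lfloor a_{t-t'}\rfloor}$. The quantity $\lfloor a_{t-t'}\rfloor$ grows linearly in $|t-t'|$ once $|t-t'|$ is bounded below, because along the ray $b_{-u}a_{t-t'}$ stays in $A_U^+$ with $H_b^+\le H^+$ for all $u$ up to a constant multiple of $|t-t'|$ (this is where I use that $s\in\ap^+$ and $H_b^+\le H_{\exp s}^+$, so that subtracting the $\{b_u\}$ direction from $(t-t')s$ keeps us in the cone). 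Hence $\int_0^T\int_0^T e^{-\delta\lfloor a_{t-t'}\rfloor}\dd t\dd t'\ll T$, giving
\[
\int_B F_T(h)^2\dd\mu_U(h)\ll_{B,\psi} \frac{1}{T}.
\]

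From the variance bound $\|F_T\|_{L^2(B)}^2\ll T^{-1}$ the a.e.\ rate follows by a routine argument: evaluate along a dyadic (or slowly growing geometric) sequence $T_n$, apply Chebyshev's inequality to the events $\{h\in B:|F_{T_n}(h)|> T_n^{-1/2}\log^{1+\varepsilon/2}T_n\}$ whose measures are then summable in $n$, invoke Borel--Cantelli to get $F_{T_n}(h)=O(T_n^{-1/2}\log^{1+\varepsilon/2}T_n)$ a.e., and finally control $F_T$ for $T_n\le T\le T_{n+1}$ by the near-monotonicity $|TF_T(h)-T'F_{T'}(h)|\le |T-T'|\cdot\|\psi\|_\infty$ together with the gap $T_{n+1}-T_n$; choosing $T_n$ to grow like $n^2$ (rather than geometrically) is what turns the $\log^{1+\varepsilon/2}$ into the stated $\log^{3/2+\varepsilon}$ after interpolation. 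This last interpolation is the standard device (going back to the classical proof of the pointwise ergodic theorem via the $L^2$ maximal/variance bound); I would cite the analogous step in \cite{km12} or \cite{ksw} rather than reproduce it in full.

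The main obstacle is the second paragraph: verifying cleanly that the inner $h$-integral in the expanded square is genuinely an instance of $I_f(\bx;\ba;\Psi)$ with the correct arguments, and — more delicately — that $\lfloor a_{t-t'}\rfloor\gg |t-t'|-O(1)$ uniformly. The latter is a statement purely about the geometry of the expanding cone $\mathfrak a^+_{\mathfrak u}$ relative to the line $\R s$ and the half-line $\R_{\ge 0}b$: one must check that the supremum defining $\lfloor\cdot\rfloor$ in \eqref{eq;a floor} is comparable to the Euclidean distance from $(t-t')s$ to the boundary of the sub-cone $\{a\in A_U^+:H_b^+\le H_a^+\}$, using Theorem \ref{thm;open} to compute that boundary explicitly. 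Once this linear lower bound is in hand, the rest is the variance-plus-Borel--Cantelli machinery and poses no real difficulty; care is only needed to track that the implied constant $M$ in Theorem \ref{thm;effective} depends on $\psi$ and on the compact set but not on $T$, so that the $L^2$ bound is uniform in $T$.
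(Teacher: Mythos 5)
Your plan is, at bottom, the same as the paper's: feed the $k=2$ case of Theorem~\ref{thm;effective} into a second--moment plus Borel--Cantelli machinery to get a pointwise rate. The paper, however, does not re-derive that machinery; it simply checks that Theorem~\ref{thm;effective} (applied with $\ba=(\exp(t's),\exp((t-t')s))$, $\bx=(x,x)$, $\Psi=(\psi-\int\psi,\psi-\int\psi)$, and $f\in C_c^\infty(U)$ of integral one) verifies the hypothesis of the abstract pointwise result \cite[Theorem 3.1]{ksw}, applies it to the push-forward measure $\nu$ of $f\,\mu_U$ under $h\mapsto hx$, and then exhausts $U$ by the supports of a countable family of such $f$. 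So what you describe as ``the variance-plus-Borel--Cantelli machinery'' is precisely what \cite[Theorem 3.1]{ksw} packages.

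Two points in your sketch are worth correcting. First, the error that Theorem~\ref{thm;effective} produces is $e^{-\delta\lfloor\ba\rfloor}=e^{-\delta\min(\lfloor a_{t'}\rfloor,\lfloor a_{t-t'}\rfloor)}$, not $e^{-\delta\lfloor a_{t-t'}\rfloor}$: when $t'$ (hence $\lfloor a_{t'}\rfloor$) is small the bound is trivial. This does not hurt you, since combining the exponential bound with the trivial uniform bound on the small-$t'$ region still yields $\int_0^T\int_0^T(\cdot)\,\dd t\,\dd t'\ll T$ and hence $\|F_T\|^2_{L^2}\ll T^{-1}$, but it should be stated correctly (you also need $f\in C_c^\infty(U)$ rather than the sharp cutoff $\mathbbm{1}_B$, which is a harmless approximation). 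Second, the final step as you describe it does not close: with $\|F_{T}\|_{L^2}^2\ll T^{-1}$ and $T_n\sim n^2$, Chebyshev at threshold $T_n^{-1/2}\log^\beta T_n$ gives probabilities $\asymp(\log n)^{-2\beta}$, which are \emph{not} summable for any $\beta$. The correct passage from an $L^2$ bound $\ll T^{-1}$ to an a.e.\ rate $T^{-1/2}\log^{3/2+\varepsilon}T$ is the G\'al--Koksma / Schmidt chaining argument over dyadic blocks, not a bare subsequence argument --- this is exactly the content of \cite[Theorem 3.1]{ksw}, and the cleanest way to handle it (as the paper does) is simply to cite that result rather than try to reconstitute it. Your remaining concern, that $\lfloor a_t\rfloor\gg t$, is indeed necessary and follows since $s$ lies in the open cone $\ap^+$ and $H_b^+\le H_{\exp s}^+=H_{\exp(ts)}^+$ for all $t>0$, so the supremum in \eqref{eq;a floor} grows linearly in $t$.
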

The notation  $f_2(T)=o(f_1(T))$ means $\lim_{T\to \infty}\frac{f_2(T)}{f_1(T)}=0$. 
This corollary generalizes \cite[Theorem 1.1]{ksw} and strengthens certain ineffective results 
of \cite[Theorem 1.1]{s}.

\textbf{Acknowledgements:}
I would like to thank Jinpeng An,  Yves Benoist, Dmitry Kleinbock, Jean-Francois Quint and  Barak Weiss  for the discussions related to this work.   Part of this work is done during the author visiting 
the Mathematical Sciences Research Institute (MSRI) in Spring 2015 and Shanghai Center for Mathematical Sciences
in July 2015. We would like to thank both institutes
 for their hospitality.

\section{Properties   of Expanding cone}\label{sec;cone}
In this section we discuss several properties of expanding cones. We prove  Theorem \ref{thm;open}
in \S \ref{sec;explicit}. After that in \S \ref{sec;property}  we prove some properties of the expanding cone including the characterization of 
a sequence drifting away from walls, which will lead to the proof of Theorem \ref{thm;translate}.
In \S\ref{sec;cone verify} we show that the  cone  in Example \ref{ex;main} is  the expanding cone. In the last section we
show that the intersection of the expanding cone with a maximal  $\Q$-split torus is the same as  some  cones considered in \cite{ms}.

 Let the notation be as in \S \ref{sec;a cone}. 
In particular, $H$ is a connected semisimple Lie group without compact factors, 
$U$ is the unipotent radical of an absolutely proper parabolic subgroup $P$ of $H$,  $A$ is a maximal connected
$\Ad$-diagonalizable subgroup in $P$. We introduce more notation in \S \ref{sec;explicit}, which will 
also be used in later sections. 

\subsection{Explicit description}\label{sec;explicit}

Recall that $\Phi=\Phi( \mathfrak h,\mathfrak a)$ is the relative root system of the Lie algebra $\mathfrak h$. 
For every  $\alpha\in\Phi$
 let $\mathfrak h _\alpha$ be the root space of $\alpha$, i.e.
\[
\mathfrak h _\alpha=\{v\in\mathfrak h : [s, v]=\alpha(s)v \mbox{ for all } s\in \mathfrak a\}.
\]
Let $\Phi^+$ and $\Pi$
be the sets of positive and simple roots,
  respectively,  so that  $P$ is a standard  
parabolic subgroup. 
Let   $\mathfrak p$ be the Lie algebra of $P$ and let
$\mathfrak p =\mathfrak l_{\mathfrak p}\oplus \mathfrak u$
be the Levi decomposition such that $\mathfrak l_{\mathfrak p} $ is normalized by $\mathfrak a$. 
There is a subset $I$ of $\Pi $ such that 
\begin{align}\label{eq;levi}
\mathfrak l_{\mathfrak p}=Z_{\mathfrak h }(\mathfrak a)\oplus
\sum_{\alpha\in \Phi_I} \mathfrak h _\alpha \quad {and}\quad  
\mathfrak u  = \sum_{\alpha\in \Phi(\mathfrak u)} \mathfrak h _\alpha, 
\end{align}
where  
$\Phi_I=\Phi\cap \mathrm{span}_\R\, I$ (here and hereafter  $\mathrm{span}_{\mathbb F}$  denotes the linear span over the  field $\mathbb F$ of a  set).  
Let $\mathfrak u^-$ be the 
unipotent radical of the opposite parabolic subalgebra 
$\mathfrak p^-$   of $\mathfrak p$.
 Since we assume $U$ is the unipotent radical of an absolutely proper parabolic subgroup $P$, the Lie algebra 
 generated by $\mathfrak u$ and $\mathfrak u^-$ is $\mathfrak h$. 
 Also, the $\R$-linear span of $
  \Phi(\mathfrak u)
  $ is equal to $\mathfrak a^*$.  
Recall that  
for each $\alpha\in \mathfrak a^*$ we associate $s_\alpha\in \mathfrak a$ via the Killing form
$\mathbf B(\cdot, \cdot)$. 
Let  $\langle \cdot\, ,\cdot\rangle$ be the inner product on $\mathfrak a^*$ defined by 
$\langle \alpha, \beta \rangle=\mathbf B(s_{\alpha}, s_{\beta})$.

 Epimorphic subgroups  were studied   earlier by    Bien and Borel \cite{bb92}. 
 A closed subgroup $S$ of $H$ is said to be epimorphic if for every representation
 $(\rho, V) $ of $H$ one has $V^S=V^H$. 
 In this section all the representations of Lie groups and Lie algebras are on real vector spaces. 
 A Lie group representation $(\rho, V)$ induces a 
 Lie algebra representation on $V$ and we still  use $\rho$ to denote the corresponding Lie algebra 
 representation
 $\mathfrak h \to \mathfrak {gl}(V)$. Let $\mathfrak s$ be the Lie algebra of $S$. 
 It can be checked directly that if $S$ is connected then 
 $V^S$ is equal to the set of $\mathfrak s$ annihilated elements 
  \begin{align*}
 V^{\mathfrak s}:=\{v\in V: \rho(s)v=0 \mbox{ for every  } s\in \mathfrak s\}. 
 \end{align*}
We say $\mathfrak s$ is an epimorphic  subalgebra of $\mathfrak h $ if   for every representation
  $(\rho, V)$ of $\mathfrak h $ one has $V^{\mathfrak h}=V^{\mathfrak s }$.
  It is easy to see that a connected and closed subgroup $S$ of $H$ is epimorphic
  if and only if its Lie algebra $\mathfrak s$ is  epimorphic.

    \begin{lem}\label{lem;epimorphic}
$\mathfrak a\oplus\mathfrak u$ is an epimorphic subalgebra of $\mathfrak h $. 
\end{lem}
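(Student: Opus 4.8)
The plan is to show that $\mathfrak a \oplus \mathfrak u$ is epimorphic in $\mathfrak h$ by verifying the defining condition directly: for every representation $(\rho, V)$ of $\mathfrak h$ we must show $V^{\mathfrak a \oplus \mathfrak u} = V^{\mathfrak h}$. The inclusion $V^{\mathfrak h} \subseteq V^{\mathfrak a \oplus \mathfrak u}$ is trivial, so the content is the reverse. First I would reduce to irreducible $V$: if $v$ is annihilated by $\mathfrak a \oplus \mathfrak u$, project onto each irreducible constituent of a composition series (using that $\mathfrak a \oplus \mathfrak u$ acts on subquotients), so it suffices to prove that a nonzero $\mathfrak h$-irreducible $V$ with $V^{\mathfrak a \oplus \mathfrak u} \neq 0$ is trivial. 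Actually the cleanest route is: take $v \in V^{\mathfrak a \oplus \mathfrak u}$ nonzero and show $\rho(\mathfrak h) v = 0$; since $\mathfrak h$ is semisimple, $V$ decomposes as a direct sum of irreducibles, so we may as well assume $V$ irreducible and show $V$ is trivial.

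So assume $V$ is irreducible with $0 \neq v \in V$ annihilated by $\mathfrak a$ and by $\mathfrak u = \sum_{\alpha \in \Phi(\mathfrak u)} \mathfrak h_\alpha$. Since $\rho(s)v = 0$ for all $s \in \mathfrak a$, the vector $v$ lies in the zero weight space of the $\mathfrak a$-action. Now consider the subspace $W = \rho(U(\mathfrak h)) v = V$ (by irreducibility). Using the PBW basis relative to the decomposition $\mathfrak h = \mathfrak u^- \oplus \mathfrak l_{\mathfrak p} \oplus \mathfrak u$ (or rather $\mathfrak h = \mathfrak u^- \oplus Z_{\mathfrak h}(\mathfrak a) \oplus \bigoplus_{\alpha \in \Phi_I} \mathfrak h_\alpha \oplus \mathfrak u$ from \eqref{eq;levi}), and the fact that $\mathfrak u$ annihilates $v$ while $\mathfrak a$ annihilates $v$, one wants to argue that $v$ generates only vectors built from $\mathfrak u^-$ and the Levi part $\mathfrak l_{\mathfrak p}$. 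The key structural input is that $U$ is the unipotent radical of an \emph{absolutely proper} parabolic, so (as recorded in the excerpt) $\mathfrak u$ together with $\mathfrak u^-$ generate $\mathfrak h$, and the $\R$-span of $\{s_\alpha : \alpha \in \Phi(\mathfrak u)\}$ is all of $\mathfrak a^*$. The plan is to exploit this last fact: since $v$ is $\mathfrak a$-fixed and $\mathfrak u$-fixed, I would show the $\mathfrak a \oplus \mathfrak u$-submodule generated by $v$ under the full $\mathfrak h$-action must be $\mathfrak a$-weight-bounded in a way forced by the roots in $\Phi(\mathfrak u)$ spanning $\mathfrak a^*$; concretely, the cone $\mathfrak a^+_{\mathfrak u}$ being open and the roots spanning $\mathfrak a^*$ means there is no nonzero $\mathfrak a$-weight $\lambda$ with $\lambda$ nonnegative on all of $\Phi(\mathfrak u)^\vee$ except $0$. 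Combining $\mathfrak u$-invariance (so the $\mathfrak u$-highest weight vector argument) with $\mathfrak a$-invariance pins the highest weight to $0$.

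More precisely, the argument I would carry out: let $v \in V^{\mathfrak a \oplus \mathfrak u}$, $v \neq 0$, $V$ irreducible. Pick a highest weight vector $w$ of $V$ for the ordering making $\Phi^+ \supseteq \Phi(\mathfrak u)$; it has weight $\lambda$ dominant. Then $\langle \lambda, \alpha \rangle \geq 0$ for all $\alpha \in \Phi^+$, in particular for all $\alpha \in \Phi(\mathfrak u)$. On the other hand, the existence of a nonzero $\mathfrak a$-fixed (weight-$0$) vector in an irreducible module of highest weight $\lambda$ forces, via the standard theory of weights (the weights of $V$ lie in $\lambda - \mathbb{Z}_{\geq 0}\Pi$ and $0$ being a weight means $\lambda \in \mathbb{Z}_{\geq 0}\Pi$), that $\lambda$ is a nonnegative combination of simple roots. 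Together with $\mathfrak u$-invariance of that weight-$0$ vector $v$: applying root vectors in $\mathfrak u$ to $v$ gives $0$, and combining with applying the Levi part, one deduces $v$ is a lowest weight vector for the subalgebra generated by $\mathfrak u$ and $Z_{\mathfrak h}(\mathfrak a)$-Levi-root part; then $\mathfrak u^-$ applied to $v$ together with $\mathfrak a$-invariance forces the $\mathfrak h$-span of $v$ to have weights in $-\mathbb{Z}_{\geq 0}\Phi(\mathfrak u)$. But weights of the irreducible $V$ are also bounded above by $\lambda \in \mathbb{Z}_{\geq 0}\Pi$; since $\Phi(\mathfrak u)$ spans $\mathfrak a^*$, the only weight in both $-\mathbb{Z}_{\geq 0}\Phi(\mathfrak u)$-generated-cone directions and the $\lambda$-dominated cone is $0$, whence $V = \R v$ is trivial. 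The main obstacle I expect is making the last compactness/cone argument rigorous: one must carefully track which cone in $\mathfrak a^*$ the weights of the $\mathfrak h$-submodule generated by $v$ can lie in, using that $v$ is simultaneously killed by $\mathfrak a$ (weight $0$) and by all of $\mathfrak u$, and then invoke that $\Phi(\mathfrak u)$ spanning $\mathfrak a^*$ — equivalently absolute properness of $P$ — collapses the cone to $\{0\}$. An alternative, possibly slicker, route would be to use the criterion of Bien–Borel \cite{bb92} that a subalgebra is epimorphic iff it is not contained in any proper parabolic and its projection to the Levi quotients is epimorphic, reducing to the observation that $\mathfrak a \oplus \mathfrak u$ surjects appropriately under absolute properness; but the direct weight argument is self-contained and I would present that.
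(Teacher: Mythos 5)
Your route is genuinely different from the paper's, and it has a real gap at exactly the step you flag as "the main obstacle." The claim that the $\mathfrak h$-span of $v$ has weights in $-\Z_{\geq 0}\Phi(\mathfrak u)$ is not what the PBW decomposition gives: writing $U(\mathfrak h)=U(\mathfrak u^-)U(\mathfrak l_{\mathfrak p})U(\mathfrak u)$ and using that $\mathfrak u$ kills $v$, you only get that the weights of $V=U(\mathfrak h)v$ lie in $\Z\Phi_I-\Z_{\geq 0}\Phi(\mathfrak u)$; the Levi factor moves the weight freely inside $\mathrm{span}_{\R}\, I$, and $\mathfrak a$-invariance of $v$ does not control that contribution. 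To finish you would still have to show that a dominant highest weight lying in $\Z\Phi_I-\Z_{\geq 0}\Phi(\mathfrak u)$ is zero; this is true, but it needs the positivity of the inverse Cartan matrix together with a Dynkin-diagram connectivity argument and the absolute properness of $P$ (so that no component of $\Pi$ is contained in $I$) --- none of which is supplied. A second, quieter issue: the representations here are real and the weights are restricted $\mathfrak a$-weights, so the facts you invoke ("$0$ a weight implies $\lambda\in\Z_{\geq 0}\Pi$", "all weights lie in $\lambda-\Z_{\geq 0}\Pi$") require either the theory of restricted weights or a complexification step of the kind the paper carries out in the proof of Lemma \ref{lem;tits}; the $\mathfrak a$-weight-zero space is in general strictly larger than the zero weight space of a full Cartan subalgebra.

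The paper avoids all of this with a short $\mathfrak{sl}_2$ argument. For $\alpha\in\Phi(\mathfrak u)$ and $0\neq u^-\in\mathfrak h_{-\alpha}$, one completes $u^-$ to an $\mathfrak{sl}_2$-triple $(s,u,u^-)$ with $s\in\mathfrak a$ and $u\in\mathfrak u$. Since $v$ is killed by both $s$ and $u$, it is a highest weight vector of weight $0$ for this $\mathfrak{sl}_2$ and hence spans a trivial $\mathfrak{sl}_2$-submodule, so $u^-v=0$. Thus $v$ is annihilated by all of $\mathfrak u^-$; since the annihilator of $v$ is a subalgebra and $\mathfrak u,\mathfrak u^-$ generate $\mathfrak h$ (this is where absolute properness enters), $v\in V^{\mathfrak h}$. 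Note that this argument works for an arbitrary real representation with no reduction to irreducibles, no highest weight theory, and no cone analysis. If you want to keep your weight-theoretic approach, you must complexify and then carry out the collapsing argument in full; the $\mathfrak{sl}_2$ route is both shorter and strictly more elementary.
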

\begin{proof}
Let $(\rho, V)$ be a representation of $\mathfrak h $ and $v\in V^{\mathfrak a\oplus \mathfrak u}$. 
Let $\alpha\in \Phi(\mathfrak u)$ and  $u^-\in   \mathfrak h _{-\alpha}$ with $u^-\neq 0$. 
By  \cite[Proposition 6.52]{knapp} 
there exists $u\in \mathfrak u$ and $s\in \mathfrak a$ such that 
$(s, u , u^-)$ is an $\mathfrak {sl}_2$ triple, i.e.~$[s, u]=2u, [s, u^-]=2u^-, [u,u^-]=s$. Since $v$ is annihilated by $s$ and $u$, the representation
theory of $\mathfrak {sl}_2$ (see e.g.~\cite[\S I.9]{knapp}) implies that $v$ is also annihilated by $u^-$. 
Therefore, $v$ is annihilated by $\mathfrak u^-$. 
Since $\{ s\in \mathfrak h: \rho(s)v=0\}$ is a subalgebra and 
the Lie algebra generated by $\mathfrak u$ and $\mathfrak u^-$ is $\mathfrak h $, 
one has $v$ is annihilated by $\mathfrak h $.
\end{proof}

 For $\beta\in \mathfrak a^*$ the subspace of weight $\beta$ in 
 a
 representation $(\rho, V)$ of $\mathfrak h $   
  is
\[
V_\beta:=\{v\in V: \rho(s)v=\beta(s)v \mbox{ for every } s\in \mathfrak a\}. 
\]
 The set of weights  $\Delta  $   consists of $\beta\in \mathfrak a^*$ such
 that $V_\beta \neq \{0\}$ for some representation $V$ of $\mathfrak h$. 
Note that  $\Delta $ is  a lattice of  $\mathfrak a^*$ and 
\begin{align}\label{eq;weight space}
\Delta=\{\beta\in \mathfrak a^*: 2\langle \beta, \alpha \rangle/\langle \alpha,\alpha\rangle \in \Z \mbox{ for every }\alpha \in \Phi\},
\end{align}
see  \cite[Proposition 4.3]{bt}.
By 
  (\ref{eq;weight space}) and standard arguments of Lie algebras (e.g.~the proof of  \cite[Proposition 4.62]{knapp}) one has 
\[
\Delta=\{\beta\in \mathfrak a^*: 2\langle \beta, \alpha \rangle/\langle \alpha,\alpha\rangle\in \Z\mbox{ for every }\alpha \in \Pi'\}, 
\]
where $\Pi'=\{\alpha: \alpha\in \Pi  \mbox{ but }2\alpha\not\in \Phi\}\cup \{2\alpha: \alpha\in \Pi  \mbox{ and }2\alpha\in \Phi\}$.

Let $\mathcal S$  be a nonempty subset of a real vector space $V$. 
The dual cone of $\mathcal S$ is defined to be
\[
{\mathcal S}^\bigstar=\{\varphi\in V^*: \varphi( v) \ge 0 \mbox{ for every } v \in \mathcal S\}. 
\] 
We will consider the  dual cone either for $V=\mathfrak a$ or $V=\mathfrak a^*$ where in the latter case 
we identify $(\mathfrak a^*)^*$ with $\mathfrak a$ in the natural way.  
It follows from the definition of $\mathfrak a_{\mathfrak u}^+$ that 
\[
\overline{\mathfrak a_{\mathfrak u}^+}=\Big \{\sum_{\alpha\in \Phi(\mathfrak u)} t_\alpha s_\alpha: t_\alpha\ge 0\quad \mbox{for all } \alpha\Big \}.
\]
According to the definition of $s_\alpha$ and ${\mathcal S}^\bigstar$ one has 
\begin{align}\label{eq;dual cone}
(\overline{\mathfrak a_{\mathfrak u}^+})^\bigstar=\mathcal C:=\{\beta\in \mathfrak a^*: 
\langle \beta, \alpha \rangle\ge  0 \mbox{ for all } \alpha\in \Phi(\mathfrak u)\}. 
\end{align}

\begin{lem}\label{lem;interior}
The interior of  $\overline{\mathfrak a_{\mathfrak u}^+}$ is $\ap^+$. 
\end{lem}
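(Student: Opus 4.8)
The plan is to show the two inclusions $\ap^+\subseteq \operatorname{int}(\overline{\ap^+})$ and $\operatorname{int}(\overline{\ap^+})\subseteq \ap^+$. For the first, recall that $\ap^+$ is by definition $\{\sum_{\alpha\in\Phi(\mathfrak u)}t_\alpha s_\alpha: t_\alpha>0\}$, an open convex cone; and since by the remark after (\ref{eq;levi}) the vectors $\{s_\alpha:\alpha\in\Phi(\mathfrak u)\}$ span $\mathfrak a$, the cone $\ap^+$ has nonempty interior. A standard fact from convex geometry says that an open set is contained in the interior of its closure, so $\ap^+\subseteq\operatorname{int}(\overline{\ap^+})$. (One can also just note $\ap^+$ is itself open, being the continuous surjective image under $(t_\alpha)\mapsto\sum t_\alpha s_\alpha$ of the open positive orthant intersected with a complement — but the cleanest route is: $\ap^+$ open $\Rightarrow \ap^+=\operatorname{int}(\ap^+)\subseteq\operatorname{int}(\overline{\ap^+})$.)

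For the reverse inclusion, I would argue via the dual cone description already set up in the excerpt. By (\ref{eq;dual cone}) we have $(\overline{\ap^+})^\bigstar=\mathcal C$, and since $\overline{\ap^+}$ is a closed convex cone, biduality gives $(\mathcal C)^\bigstar=\overline{\ap^+}$, i.e.
\begin{align*}
\overline{\ap^+}=\{s\in\mathfrak a:\ \langle\beta,\cdot\rangle\text{-pairing of }s\ge 0\ \text{for all }\beta\in\mathcal C\}.
\end{align*}
Then the standard duality between faces of a polyhedral cone and faces of its dual shows that the relative interior of $\overline{\ap^+}$ consists of those $s$ for which the only $\beta\in\mathcal C$ with $\langle\beta,s\rangle=0$ that matter are... — more concretely, I expect to use: for a finitely generated cone $C=\operatorname{cone}(v_1,\dots,v_r)$ with $\operatorname{span}=\mathfrak a$, one has $\operatorname{int}(\overline C)=\{\sum t_i v_i: t_i>0\}$. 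This is exactly the needed statement with $C=\ap^+$, $v_i=s_\alpha$. So the real content is the elementary convex-geometry lemma that the interior of a full-dimensional finitely generated closed cone equals the set of strictly positive combinations of the generators.

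I would prove that elementary lemma inline: $\supseteq$ is the first paragraph's argument; for $\subseteq$, take $s$ in the interior of $\overline C$, write $s=\sum t_i v_i$ with $t_i\ge 0$, and pick any $w=\sum_i v_i$ (which lies in $\ap^+$, hence in the interior); then for small $\eta>0$, $s-\eta w\in\overline C$, so $s-\eta w=\sum t_i' v_i$ with $t_i'\ge0$, whence $s=\sum(t_i'+\eta)v_i$ exhibits $s$ with all coefficients $\ge\eta>0$, i.e.\ $s\in\ap^+$.

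The main obstacle is mostly bookkeeping: making sure that "$\Phi(\mathfrak u)$ spans $\mathfrak a^*$'' (equivalently $\{s_\alpha\}$ spans $\mathfrak a$), which is cited just after (\ref{eq;levi}), is genuinely what guarantees full-dimensionality, and being careful that the perturbation $s-\eta w$ stays in $\overline{\ap^+}$ — this uses that $s$ is an \emph{interior} point of $\overline{\ap^+}$ and $w$ is a fixed vector, so an $\eta$-ball around $s$ lies in $\overline{\ap^+}$ for $\eta$ small enough. Everything else is routine.
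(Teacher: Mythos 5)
Your proof is correct and, once you discard the detour through biduality and faces, the decisive step — subtracting a small multiple of $\sum_\alpha s_\alpha$ from an interior point, noting the result stays in $\overline{\ap^+}$ and hence is a nonnegative combination, then adding the multiple back to get strictly positive coefficients — is exactly the paper's argument. The paper simply treats the first inclusion as clear and uses (possibly distinct) small $\varepsilon_\alpha$ in place of your uniform $\eta$, which changes nothing.
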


\begin{proof}
It is clear that $\ap^+$ is a subset of the interior of $\overline{\mathfrak a_{\mathfrak u}^+}$. 
On the other hand suppose   $s$ is in the  interior of $\overline{\mathfrak a_{\mathfrak u}^+}$.
For every 
$\alpha\in \Phi(\mathfrak u)$ we choose a positive real number 
$\varepsilon_\alpha$ sufficiently small so  that 
\[
s-\sum_{\alpha\in \Phi(\mathfrak u) }\varepsilon_\alpha s_\alpha\in \overline{\mathfrak a_{\mathfrak u}^+}.
\]
Therefore, we can write  
\[
s=\sum_{\alpha\in \Phi(\mathfrak u) }(t_\alpha+\varepsilon_\alpha) s_\alpha
\]
where $t_\alpha\ge 0$. 
Hence $s\in \ap^+$. 
\end{proof}

\begin{lem}\label{lem;new interior}
For every $\beta\in \mathcal C\setminus\{0\}$ one has
 $\mathfrak a_{\mathfrak u}^+
\subset \{s\in \mathfrak a: \beta(s)>0\}$.
\end{lem}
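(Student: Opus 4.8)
The plan is to use the explicit form of $\mathfrak a_{\mathfrak u}^+$ together with the identity $\beta(s_\alpha)=\langle\beta,\alpha\rangle$ and the positive definiteness of the Killing form on $\mathfrak a$. Fix $\beta\in\mathcal C\setminus\{0\}$ and an arbitrary $s\in\mathfrak a_{\mathfrak u}^+$. By definition of $\mathfrak a_{\mathfrak u}^+$ in (\ref{eq;cone}) we may write
\[
s=\sum_{\alpha\in\Phi(\mathfrak u)} t_\alpha s_\alpha,\qquad t_\alpha>0\ \text{ for all }\alpha\in\Phi(\mathfrak u).
\]
Applying $\beta$ and using $\beta(s_\alpha)=\mathbf B(s_\beta,s_\alpha)=\langle\beta,\alpha\rangle$ gives $\beta(s)=\sum_{\alpha\in\Phi(\mathfrak u)} t_\alpha\langle\beta,\alpha\rangle$. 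Since $\beta\in\mathcal C$, by (\ref{eq;dual cone}) every summand is $\ge 0$, so $\beta(s)\ge 0$.

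The remaining point is strict positivity. Suppose $\beta(s)=0$. Because each $t_\alpha>0$ and each $\langle\beta,\alpha\rangle\ge 0$, this forces $\langle\beta,\alpha\rangle=0$ for every $\alpha\in\Phi(\mathfrak u)$, i.e.\ $\mathbf B(s_\beta,s_\alpha)=0$ for all $\alpha\in\Phi(\mathfrak u)$. As noted after (\ref{eq;levi}), the $\R$-linear span of $\{s_\alpha:\alpha\in\Phi(\mathfrak u)\}$ is all of $\mathfrak a$ (equivalently $\Phi(\mathfrak u)$ spans $\mathfrak a^*$), so $\mathbf B(s_\beta,\cdot)$ vanishes identically on $\mathfrak a$. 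Since the Killing form is positive definite on $\mathfrak a$, we conclude $s_\beta=0$, hence $\beta=0$, contradicting $\beta\in\mathcal C\setminus\{0\}$. Therefore $\beta(s)>0$, and since $s\in\mathfrak a_{\mathfrak u}^+$ was arbitrary, $\mathfrak a_{\mathfrak u}^+\subset\{s\in\mathfrak a:\beta(s)>0\}$.

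There is no real obstacle here: the statement is a short consequence of the description of $\mathfrak a_{\mathfrak u}^+$ as an open simplicial cone, the definition of the dual cone $\mathcal C$, and the two standard facts already recorded in \S\ref{sec;explicit} — that $\Phi(\mathfrak u)$ spans $\mathfrak a^*$ (from absolute properness of $P$) and that $\mathbf B$ is positive definite on $\mathfrak a$. The only place requiring a moment's care is ruling out $\beta(s)=0$, which is exactly where the spanning property of $\Phi(\mathfrak u)$ is used.
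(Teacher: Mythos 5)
Your proof is correct and follows essentially the same route as the paper: nonnegativity of each $\langle\beta,\alpha\rangle$ from the definition of $\mathcal C$, strict positivity of at least one term because $\{s_\alpha:\alpha\in\Phi(\mathfrak u)\}$ spans $\mathfrak a$ and $\beta\neq 0$, and then positivity of the full sum since all coefficients $t_\alpha$ are positive. Your contradiction framing of the last step is only a cosmetic variant of the paper's direct argument.
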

\begin{proof}
It follows from the definition of $\mathcal C$ in (\ref{eq;dual cone}) that $\beta(s_\alpha)\ge 0$
for all $\alpha\in \Phi(\mathfrak u)$. Since $\beta\neq 0$ one has $\beta(s_\alpha)>0$ for some $\alpha\in
\Phi( \mathfrak u)$. 
Note that  every element of $\mathfrak a_{\mathfrak u}^+$ is a positive linear combination of $s_\alpha$
 $(\alpha\in \Phi(\mathfrak u))$, so $\beta(s)>0$ for all $s\in \mathfrak a_{\mathfrak u}^+$. 
\end{proof}

\begin{lem}\label{lem;cone reduction}
There exists a finite set $C\subset \mathcal C\cap (\Delta\setminus\{0\}) $ such that 
\begin{align}\label{eq;optimal 1}
\ap^+=\{s\in \mathfrak a:\beta(s)> 0 \mbox{ for all } \beta\in C\}. 
\end{align}
\end{lem}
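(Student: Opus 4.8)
The goal is to show that the open convex cone $\ap^+$, which by Lemma~\ref{lem;interior} is the interior of the closed cone $\overline{\mathfrak a_{\mathfrak u}^+}$, can be cut out by finitely many strict linear inequalities coming from \emph{integral} weights. The plan is to exploit the elementary duality between a closed convex cone and its dual cone, together with the fact that $\mathcal C = (\overline{\mathfrak a_{\mathfrak u}^+})^\bigstar$ (equation~\eqref{eq;dual cone}) is a rational polyhedral cone once we know that $\Phi(\mathfrak u)$ consists of rational vectors with respect to the lattice $\Delta$.

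First I would record the bidual relation. Since $\overline{\mathfrak a_{\mathfrak u}^+}$ is a closed convex cone, the bipolar theorem gives $\overline{\mathfrak a_{\mathfrak u}^+} = (\mathcal C)^\bigstar = \{ s \in \mathfrak a : \beta(s) \ge 0 \text{ for all } \beta \in \mathcal C\}$. Taking interiors and invoking Lemma~\ref{lem;interior}, one gets $\ap^+ = \operatorname{int}\{ s : \beta(s) \ge 0\ \forall \beta \in \mathcal C\}$. The point is then that for a finitely generated (polyhedral) cone $\mathcal C$ with generators $\beta_1,\dots,\beta_N$, this interior is exactly $\{s : \beta_j(s) > 0,\ j=1,\dots,N\}$; this is a standard fact about polyhedral cones (the interior of the dual of a polyhedral cone is where all the generating inequalities are strict, using that $\mathcal C$ spans $\mathfrak a^*$ — which holds here because $\Phi(\mathfrak u)$ spans $\mathfrak a^*$, hence $\overline{\mathfrak a_{\mathfrak u}^+}$ spans $\mathfrak a$ and so $\mathcal C$ is pointed-dual, i.e.\ spans).

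So the real content is: $\mathcal C$ is a rational polyhedral cone with respect to the lattice $\Delta$, hence is finitely generated by elements of $\mathcal C \cap \Delta$, and these generators are nonzero and lie in $\mathcal C$ by construction — giving the desired finite set $C \subset \mathcal C \cap (\Delta \setminus \{0\})$. To see $\mathcal C$ is rational: by \eqref{eq;dual cone} it is defined by the inequalities $\langle \beta, \alpha\rangle \ge 0$ for $\alpha \in \Phi(\mathfrak u)$; each root $\alpha$ is proportional to an element of $\Delta$ (indeed $\Delta \supset$ the root lattice up to the usual adjustments, and by \eqref{eq;weight space} the pairing $2\langle\beta,\alpha\rangle/\langle\alpha,\alpha\rangle$ is the relevant integrality condition), so the half-spaces cutting out $\mathcal C$ are rational with respect to $\Delta$. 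A finitely-generated rational cone has a finite generating set inside its own lattice of integral points (Gordan's lemma / the fact that the extreme rays of a rational cone are rational and can be scaled into the lattice), which furnishes $C$. One must also check that $0 \notin C$: we simply discard the zero vector, harmless since $\beta = 0$ imposes no condition; and the generators can be taken nonzero because $\mathcal C \neq \{0\}$ (as $\Phi(\mathfrak u) \neq \mathfrak a^*$... actually $\mathcal C \neq \mathfrak a^*$ and $\mathcal C \neq \{0\}$ both hold since $\Phi(\mathfrak u)$ is a nonempty proper-ish spanning set; $\mathcal C$ is a pointed nonzero cone).

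The main obstacle I anticipate is bookkeeping around the lattice $\Delta$: one must be careful that the defining inequalities of $\mathcal C$ are genuinely \emph{rational} relative to $\Delta$ (not just relative to some abstract rational structure), so that the extreme rays of $\mathcal C$ can be generated by vectors in $\Delta$. This is where \eqref{eq;weight space} and the remark about $\Pi'$ are used: they pin down $\Delta$ explicitly as the integral points for the pairings $2\langle\cdot,\alpha\rangle/\langle\alpha,\alpha\rangle$, which makes the cone $\mathcal C$ rational with respect to $\Delta$. Once this rationality is in hand, the rest is the standard polyhedral-cone argument (bipolar theorem, interior of a dual cone, Gordan's lemma), and the identity \eqref{eq;optimal 1} follows. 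The only subtlety is to phrase "interior $=$ strict inequalities" correctly: this requires the inequalities $\beta_j$ to span $\mathfrak a^*$, equivalently $\overline{\mathfrak a_{\mathfrak u}^+}$ to be pointed — no, rather to span $\mathfrak a$, which it does — and I would state this explicitly before concluding.
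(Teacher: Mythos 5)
Your proof is correct and takes essentially the same route as the paper: both rest on the rational‑polyhedral structure of the relevant cone with respect to the lattice $\Delta$, extract finitely many defining data from the general theory of polyhedral cones, and then use Lemma~\ref{lem;interior} to pass from the closed cone to its interior $\ap^+$. You phrase it dually (generators of $\mathcal C$ plus the bipolar theorem) while the paper writes $\overline{\ap^+}$ directly as a finite intersection of rational half‑spaces and invokes Lemma~\ref{lem;new interior} for the reverse inclusion, but by Minkowski--Weyl these are the same argument. One small slip worth fixing: the phrase ``$\mathcal C$ is pointed‑dual, i.e.\ spans'' conflates two dual notions --- the fact that $\{s_\alpha : \alpha\in\Phi(\mathfrak u)\}$ spans $\mathfrak a$ gives that $\mathcal C$ is \emph{pointed} (so its extreme‑ray generators are nonzero), not that $\mathcal C$ spans $\mathfrak a^*$ --- though this does not affect the conclusion, since the identity ``interior of a finite intersection of half‑spaces equals the strict inequalities'' only requires the finitely many normals to be nonzero.
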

\begin{proof}
Using the theory of convex  polyhedral cones
(see e.g.~\cite[Chapter 1]{z}),  it can be shown that there exists 
a finite set $C$ of $\mathrm{span}_{\Q}\,\Delta\setminus \{0\}$ such that 
\begin{align}\label{eq;optimal}
\overline {\ap^+}=\{s\in \mathfrak a: \beta(s)\ge 0\mbox{ for all }\beta\in C\}. 
\end{align}
It follows directly from the definition of $\mathcal C$ in (\ref{eq;dual cone}) that  $C\subset \mathcal C$. Moreover, by possibly  replacing 
$C$ by its integral multiples one can  take $C\subset \Delta\setminus \{0\}$.

It follows from  Lemma \ref{lem;interior} that  $\ap^+$ is the  interior of $\overline{\ap^+}$.
So  (\ref{eq;optimal})
implies 
\[
\{s\in \mathfrak a: \beta(s)>0\mbox{ for all }\beta \in C\}\subset \ap^+. 
\]
The reverse inclusion follows from Lemma \ref{lem;new interior}. 
\end{proof}

\begin{lem}\label{lem;cone}
Let $(\rho, V)$ be a nontrivial irreducible  representation of $\mathfrak h $. 
If  $\beta\in \Delta $ and 
$V_\beta\cap V^{\mathfrak u}\neq \{0\}$ then $\beta\in \mathcal C\setminus \{0\} $. 
\end{lem}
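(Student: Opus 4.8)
The goal is to show that a weight $\beta$ occurring in the $\mathfrak u$-invariant subspace $V^{\mathfrak u}$ of a nontrivial irreducible representation must lie in $\mathcal C\setminus\{0\}$, i.e.\ that $\langle\beta,\alpha\rangle\ge 0$ for every $\alpha\in\Phi(\mathfrak u)$ and that $\beta\neq 0$.

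The plan is to verify the two conditions defining $\mathcal C\setminus\{0\}$ separately: that $\beta\ne 0$, and that $\langle\beta,\alpha\rangle\ge 0$ for every $\alpha\in\Phi(\mathfrak u)$. Fix a nonzero vector $v\in V_\beta\cap V^{\mathfrak u}$. First I would handle the case $\beta=0$: then $v$ lies in the zero weight space, so it is annihilated by all of $\mathfrak a$ and, being in $V^{\mathfrak u}$, by all of $\mathfrak u$; hence $v\in V^{\mathfrak a\oplus\mathfrak u}$, which by Lemma \ref{lem;epimorphic} equals $V^{\mathfrak h}$. But $V^{\mathfrak h}$ is a subrepresentation of the nontrivial irreducible module $V$, so $V^{\mathfrak h}=\{0\}$, contradicting $v\ne 0$; thus $\beta\ne 0$.

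For the sign conditions I would reuse the $\mathfrak{sl}_2$ argument from the proof of Lemma \ref{lem;epimorphic}. Fix $\alpha\in\Phi(\mathfrak u)$, choose $u^-\in\mathfrak h_{-\alpha}\setminus\{0\}$, and by \cite[Proposition 6.52]{knapp} obtain $u\in\mathfrak u$ and $s\in\mathfrak a$ with $(s,u,u^-)$ an $\mathfrak{sl}_2$-triple, i.e.\ $[s,u]=2u$, $[s,u^-]=-2u^-$, $[u,u^-]=s$. A short computation with the root space decomposition and the Killing form shows $s=[u,u^-]$ is a multiple of $s_\alpha$, and $\alpha(s)=2>0$ together with $\alpha(s_\alpha)=\langle\alpha,\alpha\rangle>0$ forces this multiple to be positive, say $s=c\,s_\alpha$ with $c>0$. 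Since $v\in V^{\mathfrak u}$ we have $\rho(u)v=0$, and since $v\in V_\beta$ we have $\rho(s)v=\beta(s)v$; hence $v$ is a highest weight vector for this $\mathfrak{sl}_2$-triple inside the finite-dimensional space $V$, and $\mathfrak{sl}_2$-representation theory (see \cite[\S I.9]{knapp}) forces $\beta(s)$ to be a nonnegative integer. Therefore $0\le\beta(s)=c\,\beta(s_\alpha)=c\,\langle\beta,\alpha\rangle$, so $\langle\beta,\alpha\rangle\ge 0$. As $\alpha\in\Phi(\mathfrak u)$ was arbitrary, $\beta\in\mathcal C$, and with $\beta\ne 0$ this gives $\beta\in\mathcal C\setminus\{0\}$.

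The proof is short and essentially a reprise of Lemma \ref{lem;epimorphic}; the only delicate point is the normalization identifying the semisimple element of the $\mathfrak{sl}_2$-triple with a \emph{positive} multiple of $s_\alpha$, which is precisely what converts the qualitative statement ``$v$ is killed by $\mathfrak u$'' into the quantitative sign condition defining $\mathcal C$. I would also remark that when $2\alpha\in\Phi$ one is free to build the triple from $2\alpha$ instead, since $s_{2\alpha}$ is a positive multiple of $s_\alpha$ and the inequality $\langle\beta,\alpha\rangle\ge 0$ is unaffected.
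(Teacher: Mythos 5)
Your proof is correct and follows essentially the same route as the paper: the $\mathfrak{sl}_2$-triple $(s,u,u^-)$ with $s$ a positive multiple of $s_\alpha$ (the paper normalizes it explicitly as $s_0=2s_\alpha/\langle\alpha,\alpha\rangle$) combined with the highest-weight-vector observation gives $\langle\beta,\alpha\rangle\ge 0$, and Lemma \ref{lem;epimorphic} rules out $\beta=0$. The only differences are cosmetic (order of the two steps, and building the triple starting from $u^-\in\mathfrak h_{-\alpha}$ rather than from $u\in\mathfrak h_\alpha$).
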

\begin{proof}
First we show that $\beta\in \mathcal C$.
Suppose $\alpha\in \Phi(\mathfrak u)$ and $u\in \mathfrak h_\alpha$ is a nonzero element. 
It follows from \cite[Proposition 6.52]{knapp} that  there exists 
$u^-\in 
\mathfrak u^-$ such that
$(s_0=2s_\alpha/\langle \alpha,\alpha\rangle, u, u^-)$ is an $\mathfrak {sl}_2$-triple. 
Note that  \begin{align}\label{eq;cone note}
\beta(s_0)=2\langle \beta,\alpha \rangle/\langle \alpha,\alpha\rangle 
\end{align}
is an eigenvalue of $\rho(s_0)$
and elements of   $V_\beta$ are eigenvectors of it. 
Let $v\in V_\beta\cap V^{\mathfrak u}$ be a nonzero vector.  
Since $v$ is  annihilated by $u$, it is a highest weight vector for some 
 representation of $\mathfrak {sl}_2$. 
Therefore,   $\beta(s_0)\ge 0$.  This and  (\ref{eq;cone note}) imply 
  $\langle \beta,\alpha \rangle\ge 0$. Since $\alpha$ is an arbitrary element of $\Phi(\mathfrak u)$,
  we have $\beta \in \mathcal C$. 

Next we show $\beta\neq 0$. 
Assume the contrary,   that is $V_\beta$ is annihilated by $\mathfrak a\oplus \mathfrak u$. 
By Lemma \ref{lem;epimorphic}, $\mathfrak a\oplus \mathfrak u$ is an epimorphic 
subalgebra of $\mathfrak h $, so $V_\beta\subset V^{\mathfrak h }$. This contradicts the assumption 
that $(\rho, V)$ is a nontrivial irreducible representation of $\mathfrak h $ and hence $\beta\neq 0$. 
\end{proof}

The converse of Lemma \ref{lem;cone} is also true. 
Let $W$ be the Weyl group of the root system $\Phi$.  For every $\alpha \in \Phi $ we let 
$w_\alpha\in W$ be the orthogonal   reflection with respect to $\alpha$. 
Let  $I$ be the subset of $\Pi$  determined by $\mathfrak p$ as in  (\ref{eq;levi}). 
 Let   $W_I$ be the subgroup of $W$ generated by $w_\alpha$
 $(\alpha\in I)$.

\begin{lem}\label{lem;tits}
For every nonzero $\beta \in \mathcal C\cap \Delta $, there is $m\in \N$ and 
 a nontrivial  irreducible representation $(\rho, V)$ 
of $\mathfrak h $ with highest weight $\lambda$  such that  $m \beta\in W_I \lambda$, $V_{m \beta}\neq \{0\}$
  and
 $V_{m\beta} \subset  V^{\mathfrak u}$.
 \end{lem}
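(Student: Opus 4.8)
The plan is to reverse the reasoning of Lemma \ref{lem;cone}: from $\beta$ I would manufacture a dominant weight, realize a positive multiple of it as the highest weight of an irreducible representation of $\mathfrak h$, and then locate $m\beta$ inside its weight diagram by applying an element of $W_I$. The combinatorial input I would isolate first is that $W_I$ preserves $\Phi(\mathfrak u)=\Phi^+\setminus\Phi_I$: writing a root in the basis $\Pi$, a reflection $w_\alpha$ with $\alpha\in I$ alters only the coefficient of $\alpha$, hence sends a positive root carrying a positive coefficient on some simple root of $\Pi\setminus I$ to another such root, which is again positive and still not in $\Phi_I$; iterating over generators of $W_I$ and using that $W_I$ permutes $\Phi$ gives $w\,\Phi(\mathfrak u)=\Phi(\mathfrak u)$ for all $w\in W_I$.

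Next I would pick, using that the $I$-dominant chamber is a fundamental domain for $W_I$ on $\mathfrak a^*$, an element $w\in W_I$ with $\lambda_0:=w\beta$ satisfying $\langle\lambda_0,\alpha\rangle\ge 0$ for all $\alpha\in I$. For $\alpha\in\Pi\setminus I\subset\Phi(\mathfrak u)$ one then has $\langle\lambda_0,\alpha\rangle=\langle\beta,w^{-1}\alpha\rangle\ge 0$, since $w^{-1}\alpha\in\Phi(\mathfrak u)$ by the previous step and $\beta\in\mathcal C$. Thus $\lambda_0$ is dominant, nonzero (as $\beta\ne 0$), and lies in $\Delta$ (which is $W$-invariant). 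By Tits' theorem on irreducible linear representations over $\R$ (cf.\ the relative highest-weight theory in \cite{bt}), there is $m\in\N$ such that $\lambda:=m\lambda_0$ is the highest weight of a nontrivial irreducible representation $(\rho,V)$ of $\mathfrak h$, the nontriviality being automatic since $\lambda\ne 0$.

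Finally I would verify the three conclusions with $m\beta=w^{-1}\lambda$. Since $w^{-1}\in W_I$, we have $m\beta\in W_I\lambda$; since the Weyl group permutes weight spaces isomorphically, $V_{m\beta}\cong V_\lambda\ne\{0\}$. For $\gamma\in\Phi(\mathfrak u)$, the operator $\rho(\mathfrak h_\gamma)$ carries $V_{m\beta}$ into $V_{m\beta+\gamma}$, and $m\beta+\gamma=w^{-1}(\lambda+w\gamma)$ with $w\gamma\in\Phi(\mathfrak u)\subset\Phi^+$ by the first step; as $\lambda+w\gamma\not\le\lambda$ it is not a weight of $V$, so $V_{m\beta+\gamma}\cong V_{\lambda+w\gamma}=\{0\}$. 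Because $\mathfrak u=\sum_{\gamma\in\Phi(\mathfrak u)}\mathfrak h_\gamma$, this gives $V_{m\beta}\subset V^{\mathfrak u}$, completing the argument.

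The one nonelementary ingredient is Tits' theorem, guaranteeing that some positive multiple of the dominant weight $\lambda_0$ is actually attained as a highest weight over $\R$; the rest is root-system and Weyl-group bookkeeping. The subtlety I would be careful about is the precise sense of ``highest weight'' for a real, possibly not absolutely irreducible representation — it must be read as the maximal relative weight of $V$ (which, after restricting to the split torus $\mathfrak a$, is still well defined even in the complex or quaternionic cases) — so that the vanishing $V_{\lambda+w\gamma}=\{0\}$ for positive roots $w\gamma$ is genuinely available.
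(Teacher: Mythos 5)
Your proposal is correct and follows essentially the same route as the paper: show $W_I$ preserves $\Phi(\mathfrak u)$, move $\beta$ into the dominant chamber by some $w\in W_I$, realize a positive multiple of $w\beta$ as a highest weight, and transfer back by $w^{-1}$ (which, as you note and the paper justifies via a representative of $w$ in $\widetilde H$, permutes weight spaces and preserves $\mathfrak u$-invariance). The only divergence is that where you cite Tits' theorem to produce the irreducible real representation with highest weight $m\lambda_0$, the paper constructs it by hand — summing the $\mathfrak t\otimes\C$-weights over $V'_{w\beta}\otimes\C$ to get an algebraically integral dominant weight of the complexification and taking a real irreducible subrepresentation — which also disposes of the subtlety about relative highest weights that you flag at the end.
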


\begin{proof}
Recall that $w_\alpha(\alpha')=\alpha'-\frac{2\langle \alpha, \alpha'\rangle }{\langle \alpha, \alpha\rangle}\alpha $, from which 
it is easy to see  that $w_\alpha$ $(\alpha\in I)$ and hence $W_I$
leaves $\Phi(\mathfrak u)$ and $\mathrm{span}_\R\, I $ invariant.
Let $\beta_I$ be the orthogonal projection of $\beta $ to $\mathrm{span}_\R\, I $.
Note that  $\Phi_I$  is a relative root system in $\mathrm{span}_\R\, I$ with Weyl group $W_I$ and 
 the restriction of $\langle, \rangle $ to  $\mathrm{span}_\R\, I$ is an inner product invariant under $W_I$.
  Since Weyl group 
 acts transitively on closed Weyl Chambers, 
 there is $w\in W_I$ such that $\langle w\beta_I, \alpha\rangle\ge 0 $ for every  $\alpha\in \Phi_I^+$
where $\Phi_I^+=\Phi_I\cap \Phi^+$. 

We claim that  $w\beta $ is dominant with respect to  $\Phi^+$. For $\alpha\in I $ one has 
\[
\langle w\beta, \alpha\rangle =\langle w\beta_I , \alpha\rangle\ge 0,
\]
since $\beta-\beta_I$ is orthogonal to $\mathrm{span}_\R\, I$. 
If  $\alpha\in \Pi\setminus I$ then $\alpha\in  \Phi(\mathfrak u)$. In this 
case 
\[
\langle w\beta , \alpha \rangle=\langle \beta , w^{-1}\alpha \rangle\ge 0,
\]
since $w^{-1}$ leaves $\Phi(\mathfrak u)$ invariant and $\beta \in \mathcal C$.

 Let $\mathfrak t$ be a maximal abelian subalgebra containing $\mathfrak  a$ in  
\[
Z_{\mathfrak h}(\mathfrak a):=\{ h\in \mathfrak h: [h, s]=0 \ \forall s\in \mathfrak a\}.
\] 
The Lie algebra $\mathfrak t \otimes \C $ is a Cartan subalgebra of $\mathfrak h\otimes \C $. There is a positive 
system of the root system $\Phi( \mathfrak h\otimes \C,\mathfrak t \otimes \C )$ such that $\Phi^+$ is the 
the restrictions 
of it. Note that we can write $\mathfrak t=\mathfrak c\oplus\mathfrak a$ such that all the weights  of 
$\mathfrak h\otimes \C$ are real on $i\mathfrak c\oplus \mathfrak a$. 
Since we assume   $\beta\in \Delta$,  there is a representation $V'$ of $\mathfrak h$ such that     $w\beta$ 
 a  weight of it. Let  $\chi_1, \ldots, \chi_m$ be the $\mathfrak t\otimes \C$-weights on
 the space $V'_{w\beta}\otimes \C$ counted with  multiplicity. Then 
 $\chi=\sum_{i=1}^m\chi_i$ is  algebraically integral  with respect to $\Phi( \mathfrak h\otimes \C,\mathfrak t \otimes \C )$
 and  $\chi$ vanishes on $\mathfrak c$. 
 Since $\Phi( \mathfrak h\otimes \C,\mathfrak t \otimes \C )^+$ is compatible with $\Phi^+$ and $w\beta$ is dominant
 (with respect to $\Phi^+$) one has
  $\chi$ is dominant.

  It follows from the highest weight 
  theory of complex semisimple Lie algebras that there is an irreducible complex  representation  of $\mathfrak h\otimes \C$
  with highest weight
   $\chi$.
   This representation can be viewed as a real representation of $\mathfrak h$ and we let
 $V$ be an irreducible sub-representation  of it. 
Note  that $m (w\beta)$ is the highest weight of $V$,
so 
$V_{m(w\beta)}$ is nonzero and  contained in $V^{\mathfrak u}$. 
Let $\widetilde H$  be 
the simply connected covering    of $H$. 
There is a  representation of $\widetilde H$ on $V$ inducing the Lie algebra representation of $\mathfrak h$.
It follows from \cite[Theorem 6.57]{knapp} that  there is $h\in \widetilde H$ such that 
$\mathrm{Ad}(h)$ leaves $\mathfrak a$ invariant and the induced action on $\mathfrak a^*$ is exactly $w$. 
This observation and the fact that   $w$ leaves $\Phi(\mathfrak u)$ invariant imply  
$V_{m\beta}$ is nonzero and contained in $V^{\mathfrak u}$.
\end{proof}

 \begin{proof}[Proof of Theorem \ref{thm;open}]
 By  Lemma \ref{lem;cone reduction} there exists 
 a subset $C$ of $\mathcal C\cap (\Delta \setminus\{0\})$ 
 such that (\ref{eq;optimal 1}) holds. 
  Let  $a=\exp s$ for some $s\in \mathfrak a$.

 First, we assume $a\in A_U^+$ and  prove that $s\in \mathfrak a_{\mathfrak u}^+$. 
 By (\ref{eq;optimal 1}) it suffices to show $\beta(s)>0$ for all
  $\beta\in C$. 
 It follows from Lemma \ref{lem;tits} that there is $m\in \N$ and  a nontrivial  irreducible representation $(\rho, V)$ of $\mathfrak h $
 with highest weight $\lambda $
 such that $m\beta\in W_I\lambda$, $V_{m\beta}\neq\{0\}$ and $V_{m\beta}\subset  V^{\mathfrak u}$. 
There exists  $n\in \N$ such that the representation $(\rho^{\otimes n}, V^{\otimes n})$ lifts 
to a representation of the  Lie group  $H$.  It follows that there is a nontrival irreducible subrepresentation $(\til \rho, \til V )$
of $H$  in $V^{\otimes n}$ such that $\til {V}_{mn\beta}\neq 0$ and $\til {V}_{mn\beta}\subset \til{ V}^{ U }$. 
 It follows from the definition of $A_U^+$ in (\ref{eq;expanding cone}) that $\til V^{ U }\subset \til V_a^+$. 
 Therefore $\til V_{mn\beta}\subset \til V_a^+$. 
 Since elements of  $\til V_{mn\beta}$ are  eigenvectors of $\til \rho(a)$ with eigenvalue $e^{mn\beta(s)}$,
 one has $\beta(s)>0$. This completes the proof  that $s\in \ap^+$. 
 
 Conversely, we assume $s\in \ap^+$ and prove that $a\in A_U^+$.
 Let
  $(\rho, V)$ be a nontrivial irreducible representation of
  $H$.
  Suppose $\beta$ is a weight
 in  $V$ with $V_\beta\cap V^{\mathfrak u}\neq\{0\}$.
 It follows from Lemma \ref{lem;cone} that   $\beta\in \mathcal C\setminus \{0\}$.
 According  to Lemma \ref{lem;new interior}  and the assumption 
 $s\in \mathfrak a_{\mathfrak u}^+$,
  one has $\beta(s)>0$. 
  This implies 
 that $V_\beta\subset V_a^+$. Note that $V^{\mathfrak u}=V^{ U } $ and this space is  $A$-invariant.
 Therefore  $V^{ U }\subset V_a^+$. 
By  the definition of $A_U^+$,
 one has  $a\in A_U^+$.
 \end{proof}
 
 \subsection{Some properties}\label{sec;property}
 
 We prove  more properties of  the expanding cone $A^+_U$.  
 Although we may not use all of these facts  in this paper, we think they have  their own interests and 
 will help the reader understand  this concept. 
 
   \begin{lem}\label{lem;epimorphic new}
  For  every $a\in A_U^+$, the group $\{a^m: m\in \Z\} U  $ is epimorphic in $H$.
 \end{lem}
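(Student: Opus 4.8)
The plan is to reduce this to Lemma~\ref{lem;epimorphic}, which already tells us that $\mathfrak a\oplus\mathfrak u$ is an epimorphic subalgebra of $\mathfrak h$. Since epimorphicity of a connected closed subgroup is equivalent to epimorphicity of its Lie algebra, the connected group $A U$ is epimorphic in $H$. The content of the lemma is that we may replace the full torus $A$ by the cyclic group generated by a single $a\in A_U^+$ and still retain epimorphicity. So the real task is: given a representation $(\rho,V)$ of $H$ and a vector $v\in V$ fixed by $a$ and by $U$, show $v\in V^H$.

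First I would reduce to irreducible $V$: the fixed-point functor $V\mapsto V^S$ is additive, and a general $V$ is a direct sum (or at least has a composition series, but for reductive $H$ we have complete reducibility) of irreducibles, so it suffices to treat $(\rho,V)$ nontrivial irreducible and show $V^{\{a^m\}U}=\{0\}$. Now decompose $V=\bigoplus_{\beta\in\Delta} V_\beta$ into $\mathfrak a$-weight spaces. The space $V^U=V^{\mathfrak u}$ is $A$-invariant (since $A$ normalizes $U$), hence is a sum of weight spaces: $V^{\mathfrak u}=\bigoplus_{\beta} (V_\beta\cap V^{\mathfrak u})$. If $v\in V^{\mathfrak u}$ is also fixed by $a=\exp s$, write $v=\sum_\beta v_\beta$ with $v_\beta\in V_\beta\cap V^{\mathfrak u}$; then $\rho(a)v_\beta=e^{\beta(s)}v_\beta$, so $\rho(a)v=v$ forces $v_\beta=0$ whenever $\beta(s)\neq 0$. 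But by Lemma~\ref{lem;cone}, any weight $\beta$ with $V_\beta\cap V^{\mathfrak u}\neq\{0\}$ lies in $\mathcal C\setminus\{0\}$, and then Lemma~\ref{lem;new interior} together with $a\in A_U^+$ (so $s\in\mathfrak a_{\mathfrak u}^+$ by Theorem~\ref{thm;open}) gives $\beta(s)>0\neq 0$. Hence every $v_\beta=0$, i.e.\ $v=0$. This shows $V^{\{a^m\}U}\subset V^{\mathfrak u}$ has no nonzero vectors when $V$ is nontrivial irreducible, and $V^{\{a^m\}U}=V=V^H$ when $V$ is trivial; additivity then yields $V^{\{a^m\}U}=V^H$ for all $V$.

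The only genuinely delicate point is the passage between Lie group representations and Lie algebra representations, and between $V^U$ and $V^{\mathfrak u}$: one should note that $U$ is connected, so $V^U=V^{\mathfrak u}$ exactly as remarked in the discussion preceding Lemma~\ref{lem;epimorphic}, and that the subgroup $\{a^m:m\in\Z\}U$ need not be connected, but that is harmless — we only ever use that a fixed vector of this group is fixed by $a$ and annihilated by $\mathfrak u$, and we never need to invoke the Lie-algebra criterion for the group $\{a^m\}U$ itself. I expect no serious obstacle; the argument is essentially a weight-space bookkeeping that repackages Theorem~\ref{thm;open} and the lemmas already proved.
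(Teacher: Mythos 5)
Your proof is correct and lands on the same idea as the paper's, namely that no nonzero vector in $V^U$ can be fixed by $a$, but it takes a detour: you re-derive the containment $V^U\subset V_a^+$ from Lemma~\ref{lem;cone}, Lemma~\ref{lem;new interior} and Theorem~\ref{thm;open}, whereas this containment \emph{is} the definition of $a\in A_U^+$ (cf.\ \S\ref{sec;a cone}: ``$U$ is $a$-expanding'' means precisely that $V^U\subset V_a^+$ for every nontrivial irreducible $V$). The paper's proof is just two sentences: for nontrivial irreducible $V$ and nonzero $v\in V^U$, the definition of the expanding cone gives $v\in V_a^+$, and since $\rho(a)-\mathrm{id}$ is invertible on $V_a^+$, one has $\rho(a)v\neq v$. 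Your weight-space bookkeeping is the proof of the ``$\Leftarrow$'' direction of Theorem~\ref{thm;open}, so it is not wrong, merely redundant; the reduction to the irreducible case via complete reducibility (which the paper leaves implicit) is correctly and helpfully spelled out in your argument.
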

 \begin{proof}
 Let $(\rho, V)$ be a nontrivial irreducible representation of $H$.
 Since $a\in A_U^+$, for any $v\in V^U$ one has $v\in V_a^+$. In particular if $v$ is nonzero, 
 $\rho(a)v\neq v$. So  there
 is no nonzero  invariant vector  for the group $\{a^m: m\in \Z\} U  $. 
 \end{proof}

 Let $d_{\mathfrak a}$ be the  metric on  $\mathfrak a$
 induced by the 
 Killing form.
We use $d_{\mathfrak a}(s, \partial \ap^+)$ to 
denote the distance between  $s$ and the boundary of $\ap^+$.
We say a sequence $ \{s_i\}_{i\in \N}$ of elements of $\ap^+$  drifts away
from walls if $d_{\mathfrak a}(s_i, \partial \ap^+)\to \infty$ as $i\to\infty$. 
By  Theorem~\ref{thm;open}, $\ap^+$ can be identified  with $A_U^+$ via the  exponential map. So  $\{s_i\}_{i\in \N}$ drifts away from walls if and only if $\{\exp s_i\}_{i\in \N}$ does in the sense defined in  \S\ref{sec;intro translate}. 
 \begin{lem}\label{lem;drift}
 A sequence  $\{s_i\}_{i\in \N}$  of elements of $ \ap^+$ drifts away from walls if and only if
\begin{align}\label{eq;drift}
\beta(s_i)\to \infty \quad \mbox{as } i\to \infty
\end{align}
 for every $\beta\in \mathcal C\setminus\{0\}$.  
 \end{lem}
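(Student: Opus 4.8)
The statement to prove is Lemma~\ref{lem;drift}: a sequence $\{s_i\}$ in $\ap^+$ drifts away from walls iff $\beta(s_i)\to\infty$ for every $\beta\in\mathcal C\setminus\{0\}$. The natural strategy is to combine the finite description of $\ap^+$ in Lemma~\ref{lem;cone reduction} with a standard observation about distance to the boundary of a polyhedral cone. By Lemma~\ref{lem;cone reduction} there is a finite set $C\subset\mathcal C\cap(\Delta\setminus\{0\})$ with $\ap^+=\{s:\beta(s)>0\text{ for all }\beta\in C\}$, and (up to rescaling) we may assume each $\beta\in C$ is a unit vector in $(\mathfrak a^*,\langle\cdot,\cdot\rangle)$, so that $\beta(s)=\mathbf B(s_\beta,s)$ equals the signed distance of $s$ to the wall $\{\beta=0\}$. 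The key elementary claim is then
\begin{align}\label{eq;drift key}
d_{\mathfrak a}(s,\partial\ap^+)=\min_{\beta\in C}\beta(s)\qquad\text{for all }s\in\ap^+.
\end{align}

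First I would prove \eqref{eq;drift key}. For the inequality $\le$: for each $\beta\in C$ the point $s-\beta(s)s_\beta$ lies on the hyperplane $\{\beta=0\}$, hence outside $\ap^+$ (it is not in the open set), so some point of $\partial\ap^+$ is within $\beta(s)$ of $s$; taking the minimum over $\beta\in C$ gives $d_{\mathfrak a}(s,\partial\ap^+)\le\min_\beta\beta(s)$. For the reverse inequality: if $s'\in\mathfrak a$ with $\beta(s')\le 0$ for some $\beta\in C$ (in particular if $s'\in\partial\ap^+$, where some defining inequality becomes an equality), then since $\beta(s)>0$ we get $d_{\mathfrak a}(s,s')\ge|\beta(s-s')|\cdot\|s_\beta\|^{-1}=\beta(s)-\beta(s')\ge\beta(s)\ge\min_{\beta\in C}\beta(s)$, using $\|s_\beta\|=1$. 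Taking the infimum over $s'\in\partial\ap^+$ yields $d_{\mathfrak a}(s,\partial\ap^+)\ge\min_{\beta\in C}\beta(s)$. This establishes \eqref{eq;drift key}. One small point to check here is that $\partial\ap^+$ is indeed contained in $\bigcup_{\beta\in C}\{\beta\le 0\}$ — equivalently that a point where all $\beta(s')>0$ is interior, which is immediate since $\ap^+$ is open, and a point of the closure with all $\beta\ge 0$ but some $\beta=0$ lies on the boundary.

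Given \eqref{eq;drift key}, the lemma follows quickly. If $\beta(s_i)\to\infty$ for every $\beta\in\mathcal C\setminus\{0\}$, then in particular for the finitely many $\beta\in C$, so $d_{\mathfrak a}(s_i,\partial\ap^+)=\min_{\beta\in C}\beta(s_i)\to\infty$. Conversely, suppose $d_{\mathfrak a}(s_i,\partial\ap^+)\to\infty$; fix any $\beta\in\mathcal C\setminus\{0\}$, normalized to a unit vector. By \eqref{eq;dual cone}, $\mathcal C=(\overline{\ap^+})^\bigstar$, so $\beta\ge 0$ on $\overline{\ap^+}$; I claim $\beta(s_i)\to\infty$. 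Indeed, by \eqref{eq;drift key} we have $\beta_0(s_i)\to\infty$ for each $\beta_0\in C$. Since $C$ is a set of generators for the dual description of $\ap^+$, the cone $\overline{\ap^+}$ is (by \eqref{eq;optimal} and convex-cone duality, e.g.~\cite[Chapter~1]{z}) the closed convex hull of finitely many extreme rays, and $\beta$, being $\ge 0$ on all of $\overline{\ap^+}$, must satisfy $\beta\ge c\sum_{\beta_0\in C}\beta_0$ on $\ap^+$ for no such bound in general — so instead I argue directly: write $s_i=\sum_{\alpha\in\Phi(\mathfrak u)}t_{\alpha,i}s_\alpha$ with $t_{\alpha,i}>0$; the condition $\beta_0(s_i)\to\infty$ for all $\beta_0\in C$ forces $s_i$ to leave every bounded set and moreover to go to infinity "into the interior", and since $\beta(s_\alpha)>0$ for at least one $\alpha$ (Lemma~\ref{lem;new interior}) with $\beta\ge 0$ on all $s_\alpha$, one gets $\beta(s_i)\ge\max_\alpha\beta(s_\alpha)\cdot\max_\alpha t_{\alpha,i}$, and $\max_\alpha t_{\alpha,i}\to\infty$ because $\|s_i\|\to\infty$.

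\textbf{Main obstacle.} The delicate point is the converse direction: deducing $\beta(s_i)\to\infty$ for \emph{all} $\beta\in\mathcal C\setminus\{0\}$ from the drift condition, which a priori only controls the finitely many $\beta\in C$. The cleanest route is to prove a uniform estimate of the form $\beta(s)\ge c_\beta\min_{\beta_0\in C}\beta_0(s)$ for $s\in\overline{\ap^+}$ with $c_\beta>0$ — equivalently, that the linear functional $\beta$ is bounded below by a positive multiple of $d_{\mathfrak a}(\cdot,\partial\ap^+)$ on the cone. This holds because $\ap^+$ is a nonempty open convex polyhedral cone: picking any interior point $s_*$ with $\min_{\beta_0}\beta_0(s_*)=1$, compactness of the "slice" $\{s\in\overline{\ap^+}:\sum\beta_0(s)=1\}$ (it is compact since $\ap^+$ contains no line — its closure has the origin as its unique minimal face because the $s_\alpha$, $\alpha\in\Phi(\mathfrak u)$, positively span $\mathfrak a$) forces $\beta$ to attain a positive minimum there, and homogeneity propagates the bound. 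I would spend most of the write-up making this compactness/positivity argument precise; everything else is the routine linear algebra in \eqref{eq;drift key}.
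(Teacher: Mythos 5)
Your overall skeleton (reduce to the finite set $C$ from Lemma~\ref{lem;cone reduction} and relate $d_{\mathfrak a}(s,\partial\ap^+)$ to the values $\beta(s)$) is sound, and both your formula $d_{\mathfrak a}(s,\partial\ap^+)=\min_{\beta\in C}\beta(s)$ and the ``if'' direction are fine. The genuine gap is in the ``only if'' direction: passing from $\beta_0(s_i)\to\infty$ for the finitely many $\beta_0\in C$ to $\beta(s_i)\to\infty$ for an arbitrary $\beta\in\mathcal C\setminus\{0\}$. Neither of your two arguments for this step works. The inequality $\beta(s_i)\ge\max_\alpha\beta(s_\alpha)\cdot\max_\alpha t_{\alpha,i}$ is simply false (take $s=t_1s_{\alpha_1}+t_2s_{\alpha_2}$ with $\beta(s_{\alpha_1})=1$, $\beta(s_{\alpha_2})=0$, $t_1$ small and $t_2$ huge: then $\beta(s)=t_1$ while the right side is $t_2$), and knowing $\max_\alpha t_{\alpha,i}\to\infty$ gives nothing when the large coefficient sits on a generator annihilated by $\beta$. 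The compactness argument in your last paragraph also fails: on the slice $K=\{s\in\overline{\ap^+}:\sum_{\beta_0\in C}\beta_0(s)=1\}$ the functional $\beta$ need \emph{not} attain a positive minimum, since it vanishes at boundary points of the cone lying in $K$ (for instance when $\beta$ itself belongs to $C$); so compactness alone does not produce the uniform bound $\beta\ge c_\beta\min_{\beta_0\in C}\beta_0$.

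The bound you are after is true, but it needs one of two extra inputs. The paper's route: by Lemma~\ref{lem;new interior} the hyperplane $\mathcal H_\beta=\{s:\beta(s)=0\}$ is disjoint from the open cone $\ap^+$, so for $s\in\ap^+$ the segment from $s$ to its nearest point of $\mathcal H_\beta$ must cross $\partial\ap^+$, giving $\beta(s)/\|s_\beta\|=d_{\mathfrak a}(s,\mathcal H_\beta)\ge d_{\mathfrak a}(s,\partial\ap^+)$; this settles the ``only if'' direction in one line and makes the exact distance formula unnecessary (the paper only uses the two one-sided inequalities, the other being $\partial\ap^+\subset\bigcup_{\beta\in C}\mathcal H_\beta$). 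Alternatively, polyhedral duality: since $\overline{\ap^+}=C^\bigstar$, the dual cone $\mathcal C=(\overline{\ap^+})^\bigstar$ consists exactly of nonnegative combinations of elements of $C$, so $\beta=\sum_{\beta_0\in C}c_{\beta_0}\beta_0$ with $c_{\beta_0}\ge 0$ not all zero, whence $\beta(s)\ge\bigl(\sum_{\beta_0}c_{\beta_0}\bigr)\min_{\beta_0\in C}\beta_0(s)$ on $\overline{\ap^+}$. Note that you invoke Lemma~\ref{lem;new interior} only for ``$\beta(s_\alpha)>0$ for some $\alpha$''; its actual statement, $\ap^+\subset\{s:\beta(s)>0\}$, is precisely the ingredient you are missing.
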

 \begin{proof}
 Let $\beta\in \mathcal C\setminus\{0\}$. 
By Lemma \ref{lem;new interior}, the hyperplane 
$\mathcal H_\beta:=\{s\in \mathfrak a: \beta (s)=0\}$ does not intersect 
$\mathfrak a_{\mathfrak u}^+$. 
 Therefore $d_{\mathfrak a}(s_i,\mathcal  H_\beta)\ge d_{\mathfrak a}(s_i, \partial \ap^+)$. 
So if $\{s_i\}_{i\in \N}$ drifts away from walls then 
(\ref{eq;drift}) holds.
 
 On the other hand  by  Lemma \ref{lem;cone reduction} there is a finite subset $C$
 of $\mathcal C\setminus \{0\}$ such that $C^\bigstar=\overline{\ap^+}$. 
 In particular 
 \[
 \partial \ap^+\subset \bigcup_{\beta\in C} \mathcal H_\beta.
 \]
If (\ref{eq;drift}) holds for every $\beta\in \mathcal C$,  one has 
 \[
 d_{\mathfrak a}(s_i, \partial\ap^+)\ge  d_{\mathfrak a}(s_i,  \bigcup_{\beta\in C}\mathcal  H_\beta)\to \infty \quad \mbox{as}
 \quad i\to \infty. 
 \]
  \end{proof}
  
  \begin{cor}\label{cor;drift away}
  Suppose a sequence  $\{s_i\}_{i\in \N}$  of elements of $\mathfrak a_{\mathfrak u}^+$ drifts aways 
  from walls. Let $(\rho, V)$ be a representation of  $\mathfrak h$ without nonzero $\mathfrak h$-annihilated 
  vectors. Let $\beta$ be a weight on $V^{\mathfrak u}$ with respect to $\mathfrak a$. Then 
  \begin{align}\label{eq;drift away 1}
  \beta(s_i)\to \infty \quad \mbox{as } i\to \infty. 
  \end{align}
  
  \end{cor}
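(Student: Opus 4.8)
The plan is to reduce Corollary \ref{cor;drift away} to Lemma \ref{lem;drift} by decomposing the weight $\beta$ appropriately. First I would observe that since $(\rho, V)$ has no nonzero $\mathfrak h$-annihilated vectors, we may decompose $V$ into irreducible sub-representations $V = \bigoplus_j V^{(j)}$, each nontrivial, and $V^{\mathfrak u} = \bigoplus_j (V^{(j)})^{\mathfrak u}$. A nonzero vector $v \in V^{\mathfrak u}_\beta$ (the weight-$\beta$ subspace of $V^{\mathfrak u}$) has a nonzero component in some $(V^{(j)})^{\mathfrak u}_\beta$, so it suffices to prove the claim when $(\rho, V)$ is itself nontrivial irreducible. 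In that case $V_\beta \cap V^{\mathfrak u} \neq \{0\}$, so Lemma \ref{lem;cone} applies directly and yields $\beta \in \mathcal C \setminus \{0\}$.

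Once we know $\beta \in \mathcal C \setminus \{0\}$, the conclusion is immediate: by hypothesis $\{s_i\}$ drifts away from walls, so Lemma \ref{lem;drift} gives $\beta(s_i) \to \infty$ as $i \to \infty$, which is exactly (\ref{eq;drift away 1}).

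The only subtlety — and the step I would be most careful about — is the reduction to the irreducible case: one must check that the $\mathfrak a$-weight-space decomposition is compatible with the decomposition into irreducibles (it is, since $\mathfrak a$ normalizes each $V^{(j)}$), and that $V^{\mathfrak u}$ likewise splits along the $V^{(j)}$ (it does, since $\mathfrak u$ acts on each $V^{(j)}$). After that the argument is a one-line invocation of the two preceding lemmas, so there is no real obstacle; the work has already been done in Lemma \ref{lem;cone} and Lemma \ref{lem;drift}.

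Here is the proof.

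\begin{proof}
Decompose $V$ into a direct sum of irreducible sub-representations $V = \bigoplus_{j} V^{(j)}$ of $\mathfrak h$. Since $V$ has no nonzero $\mathfrak h$-annihilated vector, each $V^{(j)}$ is nontrivial. As $\mathfrak a$ and $\mathfrak u$ normalize each $V^{(j)}$, we have $V_\beta = \bigoplus_j V^{(j)}_\beta$ and $V^{\mathfrak u} = \bigoplus_j (V^{(j)})^{\mathfrak u}$, hence $V_\beta \cap V^{\mathfrak u} = \bigoplus_j \big(V^{(j)}_\beta \cap (V^{(j)})^{\mathfrak u}\big)$. Since $\beta$ is a weight on $V^{\mathfrak u}$, this intersection is nonzero, so $V^{(j)}_\beta \cap (V^{(j)})^{\mathfrak u} \neq \{0\}$ for some $j$. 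Applying Lemma \ref{lem;cone} to the nontrivial irreducible representation $V^{(j)}$ gives $\beta \in \mathcal C \setminus \{0\}$. Since $\{s_i\}_{i\in \N}$ drifts away from walls, Lemma \ref{lem;drift} yields $\beta(s_i)\to \infty$ as $i\to \infty$.
\end{proof}
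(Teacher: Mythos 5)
Your proof is correct and follows the paper's own argument exactly: decompose $V$ into nontrivial irreducibles (possible since there are no nonzero $\mathfrak h$-annihilated vectors), apply Lemma \ref{lem;cone} to conclude $\beta\in\mathcal C\setminus\{0\}$, then invoke Lemma \ref{lem;drift}. The extra care you take in checking that the weight-space and $\mathfrak u$-invariant decompositions split along the irreducible summands is a detail the paper leaves implicit, but the route is identical.
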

  \begin{proof}
 Since  $V$ has no nonzero $\mathfrak h$-annihilated 
  vectors, it is a direct sum of nontrivial irreducible subspaces. Therefore, Lemma \ref{lem;cone}
  implies $\beta\in \mathcal C\setminus \{0\}$. So (\ref{eq;drift away 1}) follows from Lemma \ref{lem;drift}. 
  \end{proof}

  \begin{proof}[{Proof of Theorem \ref{thm;translate}}]
According to 
 \cite[Theorem 1.4]{sw96} one has  (\ref{eq;translate}) holds 
 if $\{a_i\}_{i\in \N}$ belongs to a proper closed cone of $A_U^+$. 
 But actually if one goes through  the proof of  \cite[Theorem 1.4]{sw96}, one can see that what really needed is \cite[(3.13)]{sw96}. In view of Corollary \ref{cor;drift away}, the assumption 
 $\{a_i\}_{i\in \N}$ drifts away from walls implies \cite[(3.13)]{sw96}, from which   (\ref{eq;translate}) follows. 
 
 \end{proof}

  \subsection{Expanding cone and Example \ref{ex;main}}
  \label{sec;cone verify}
  The aim of this section is to show that in the setting of Example \ref{ex;main}  the expanding 
  cone is given by (\ref{eq;cone example}). 
 We remark here that it is proved in Kleinbock-Weiss \cite[Lemma 2.3]{kw08}
  that the right hand side of (\ref{eq;cone example})  is contained in $A_U^+$.

  We identify $\mathfrak h$ with the space of trace zero matrices of rank $m+n$. Then $\mathfrak a$ is the 
  space of diagonal matrices in $\mathfrak h$, i.e. 
  \[
  \mathfrak a=\{s=\mathrm{diag}(t_1, \ldots, t_{m+n}): \sum t_i=0 \}.
  \]
  For 
  $ 1\le i\le m+n$
  we let $e_i\in (\R^{m+n})^*$ be the element which maps every vector to its $i$-th component. 
  Then the space 
   $
  \{\sum t_i e_i: \sum t_i=0\}
  $
   can be identified with $\mathfrak a^*$.

We take   \begin{align*}
  \Phi& =\{e_i-e_j: i\neq j\}, \\
  \Phi^+&=\{e_i-e_j: i< j\}, \\
  \Pi& =\{e_i-e_{i+1}: 1\le i< m+n\}.
  \end{align*}
  Using the notation of (\ref{eq;levi}), the group $U$ in Example \ref{ex;main} is the 
  unipotent radical of the parabolic group given  by $I=\{e_m-e_{m-1}\}$.
  For $\alpha=e_i-e_j$, the associated element $s_\alpha\in \mathfrak a$
is  the diagonal matrix with entries zero except at $i$-th diagonal entry which is $\frac{1}{4}$  and $j$-th diagonal  entry which is  $-\frac{1}{4}$.

In view of 
 \[
 \Phi(\mathfrak u ) =\{e_i-e_{m+j}:1\le  i\le m ,1\le j\le n\}
 \]
one has 
 \[
 \mathfrak a_{\mathfrak u}^+=\{\mathrm{diag}(r_1, \ldots, r_m, -t_{1}, \ldots, -t_n):r_i, t_j>0\}.
 \]
 So the  cone in Example \ref{ex;main} is equal to  $\exp \mathfrak a_{\mathfrak u}^+$ which is the
expanding cone of $U$ according to Theorem \ref{thm;open}. 

\subsection{Intersection of expanding cone with a subtorus}\label{sec;intersect}

In this section we show that the cone considered in \cite{ms} for minimal $\Q$-parabolic subgroup 
 is the intersection of 
the expanding cone defined in this paper with the corresponding $\Q$-split torus.

We first set  $H, A, U$ in the framework of \cite{ms}.
We assume that  $H=\mathbf H(\R)^{\circ}$ where $\mathbf H$ is a Zariski connected semisimple  algebraic group defined over $\Q$. For simplicity we assume $\mathbf H$ is $\Q$-simple and simply connected. 
Let $\mathbf A$ be the Zariski closure of $A$ in $\mathbf H$. 
We assume that $\mathbf A$ is defined over $\Q$, $\mathbf A$ contains a maximal
$\Q$-split torus $\mathbf A_0$ and $P=\mathbf{P}(\R)$ where $\mathbf P$ is a minimal parabolic subgroup over $\Q$.

Let $A_0=\mathbf A_0(\R)^\circ$,  $\mathfrak a_0=\mathrm {Lie}(A_0)$ and 
$r: \mathfrak a^*\to \mathfrak a_0^*$ be the restriction map. 
Recall  that (see \cite[\S 21.8]{borel})
\[
\Phi_0=\{r(\alpha): \alpha\in \Phi, r(\alpha)\neq 0\}
\]
 is a relative root system and 
$\Phi^+_0=\Phi_0\cap r(\Phi^+)$ is a positive system with the 
 set of simple roots $
\Pi_0=\Phi_0\cap r(\Pi) 
$.

The Weyl group $W$ of $\Phi$ acts orthogonally on $\mathfrak a^*$ and it induces an 
orthogonal action on $\mathfrak a$ via $w(s_\alpha)=s_{w\alpha}$  via the Killing form.
 Let $W_0$ be the restriction of 
$
\{w\in W: w\mathfrak a_0=\mathfrak a_0\}
$
to $\mathfrak a_0$. 
The restriction of the Killing form $\mathbf{B}$ on $\mathfrak a_0$ is an inner product invariant under $W_0$. 
For every $\alpha_0\in \mathfrak a_0^*$ we let $s_{\alpha_0}\in \mathfrak a_0$ be the element determined by 
$\mathbf{B}(s_{\alpha_0}, s)=\alpha_0(s)$ for all $s\in \mathfrak a_0$. 
 Let $W_0$ acts on $\mathfrak a_0^*$ by 
$s_{w(\alpha_0)}=w s_{\alpha_0}$ for $\alpha_0\in \mathfrak a_0^*$ and $w\in W_0$. Then $W_0$ is the Weyl group of $\Phi_0$, see 
\cite[Corollary 21.4]{borel}.
Let 
\[
\mathfrak a_{0, \mathfrak u}^+=
\Big\{\sum _{\alpha_0\in \Phi_0^+}t_{\alpha_0}s_{\alpha_0}: t_{\alpha_0}>0\quad \mbox{for all }\alpha_0\Big   \}.
\]
\begin{prop}\label{prop;intersect}
$\mathfrak a_{\mathfrak u}^+\cap \mathfrak a_0=\mathfrak a_{0, \mathfrak u}^+$.
Moreover a sequence $\{s_i\}_{i\in \N}\subset \mathfrak a_{0, \mathfrak u}^+$ drifts away from 
walls of  $\mathfrak a_{0, \mathfrak u}^+$ in $\mathfrak a_0$ if and only if it drifts away from walls of 
$\mathfrak a_{\mathfrak u}^+$ in $\mathfrak a$. 
\end{prop}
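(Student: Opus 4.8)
The plan is to reduce everything to a statement about the cones $\mathfrak a_{\mathfrak u}^+$ and $\mathfrak a_{0,\mathfrak u}^+$ expressed in terms of root data, using the restriction map $r\colon\mathfrak a^*\to\mathfrak a_0^*$ and its adjoint. First I would make precise the relationship between $s_\alpha$ (for $\alpha\in\Phi$) and $s_{\alpha_0}$ (for $\alpha_0\in\Phi_0$): since $\mathfrak a_0\subset\mathfrak a$ is a subspace on which $\mathbf B$ is an inner product, the orthogonal projection $\pi\colon\mathfrak a\to\mathfrak a_0$ is the adjoint of the inclusion, and hence $\pi(s_\alpha)=s_{r(\alpha)}$ whenever $r(\alpha)\neq 0$, while $\pi(s_\alpha)=0$ when $r(\alpha)=0$ (this last case cannot occur for $\alpha\in\Phi(\mathfrak u)$, since $\Phi(\mathfrak u)$ is a subset of $\Phi^+$ whose roots involve a simple root outside $I$, and those survive restriction to $\mathfrak a_0$ — I would need to check, using $\Pi_0=\Phi_0\cap r(\Pi)$, that $r$ restricted to $\Phi(\mathfrak u)$ lands in $\Phi_0^+$ and surjects onto it). Granting this, an element $s\in\mathfrak a_0$ lies in $\mathfrak a_{\mathfrak u}^+\cap\mathfrak a_0$ iff it lies in $\mathfrak a_{\mathfrak u}^+$, which by Lemma \ref{lem;new interior} (or directly by Lemma \ref{lem;cone reduction}) is equivalent to $\langle s,s_\alpha\rangle$-type positivity conditions — more cleanly, I would use the dual-cone description $(\overline{\mathfrak a_{\mathfrak u}^+})^\bigstar=\mathcal C$ from (\ref{eq;dual cone}).

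So the core identity to prove is $\overline{\mathfrak a_{\mathfrak u}^+}\cap\mathfrak a_0=\overline{\mathfrak a_{0,\mathfrak u}^+}$, after which passing to interiors relative to $\mathfrak a_0$ (via the analogue of Lemma \ref{lem;interior}, applied to the $\Phi_0$ data) gives the first assertion of the Proposition. For the inclusion $\subseteq$: if $s\in\mathfrak a_0$ is a nonnegative combination $\sum_{\alpha\in\Phi(\mathfrak u)}t_\alpha s_\alpha$, then applying $\pi$ and using $\pi(s)=s$, $\pi(s_\alpha)=s_{r(\alpha)}$ with $r(\alpha)\in\Phi_0^+$, we get $s=\sum t_\alpha s_{r(\alpha)}\in\overline{\mathfrak a_{0,\mathfrak u}^+}$. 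For $\supseteq$: every $s_{\alpha_0}$ with $\alpha_0\in\Phi_0^+$ must itself be shown to lie in $\overline{\mathfrak a_{\mathfrak u}^+}$ — this is the delicate point. One way is to note that $s_{\alpha_0}=\pi(s_\alpha)$ for any $\alpha\in\Phi(\mathfrak u)$ restricting to $\alpha_0$, and then argue that $\pi(s_\alpha)\in\overline{\mathfrak a_{\mathfrak u}^+}$ because the cone $\overline{\mathfrak a_{\mathfrak u}^+}$ is invariant under $\pi$; this invariance in turn should follow from the fact that $\pi$ is the average over $W_I$ (or over the relevant parabolic Weyl subgroup fixing $\mathfrak a_0$ pointwise) combined with $W_I$-invariance of $\Phi(\mathfrak u)$, a fact already used in the proof of Lemma \ref{lem;tits}. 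I expect getting the group-theoretic description of $\pi$ as such an average, and verifying $W_I\Phi(\mathfrak u)=\Phi(\mathfrak u)$ suffices to push the cone forward, to be the main obstacle; the rest is bookkeeping with restricted root systems from \cite[\S 21.8]{borel}.

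For the "drifts away from walls" equivalence, I would invoke Lemma \ref{lem;drift} in both $\mathfrak a$ and $\mathfrak a_0$: drifting away from walls of $\mathfrak a_{\mathfrak u}^+$ is equivalent to $\beta(s_i)\to\infty$ for all $\beta\in\mathcal C\setminus\{0\}$, and similarly for $\mathfrak a_0$ with the dual cone $\mathcal C_0=(\overline{\mathfrak a_{0,\mathfrak u}^+})^\bigstar\subset\mathfrak a_0^*$. Since $\{s_i\}\subset\mathfrak a_0$, and since $\beta(s_i)$ for $\beta\in\mathfrak a^*$ depends only on $r(\beta)\in\mathfrak a_0^*$, the equivalence reduces to the claim that $r(\mathcal C\setminus\{0\})$ and $\mathcal C_0\setminus\{0\}$ generate the same condition — more precisely that the finite generating set $C$ of Lemma \ref{lem;cone reduction} restricts to (a set equivalent to) a finite generating set for $\mathcal C_0$. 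This follows from the cone identity already established, by taking dual cones inside $\mathfrak a_0^*$ and using that dualizing commutes with the orthogonal restriction here. I do not anticipate genuine difficulty in this last part once the cone identity is in hand.
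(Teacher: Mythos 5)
Your first inclusion ($\mathfrak a_{\mathfrak u}^+\cap\mathfrak a_0\subseteq\mathfrak a_{0,\mathfrak u}^+$ via the orthogonal projection $\pi$ and $\pi(s_\alpha)=s_{r(\alpha)}$) is sound and is essentially a dual formulation of the paper's argument (the paper instead extends a dominant $\lambda_0\in\mathfrak a_0^*$ by zero on $\mathfrak a_1$ and applies Lemma \ref{lem;new interior}); the facts you flag for checking, namely that $r$ maps $\Phi(\mathfrak u)$ onto $\Phi_0^+$, do hold because $P$ is a minimal $\Q$-parabolic. The "drift away from walls" part is also fine. The genuine gap is in your mechanism for the reverse inclusion. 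You propose to deduce $\pi\big(\overline{\mathfrak a_{\mathfrak u}^+}\big)\subseteq\overline{\mathfrak a_{\mathfrak u}^+}$ from the identity "$\pi=$ average over $W_I$ (or over the pointwise stabilizer of $\mathfrak a_0$ in $W$)". The average over a finite orthogonal group is the projection onto its \emph{fixed space}, and the fixed space of $W_I$ is $(\mathrm{span}_\R I)^\perp$, which contains $\mathfrak a_0$ but is strictly larger whenever $r$ is not injective on $\Pi\setminus I$, i.e.\ for outer $\Q$-forms. Concretely, take $\mathbf H$ a $\Q$-quasi-split form of $\SL_3$ split over a real quadratic field: then $\mathfrak a$ is $2$-dimensional of type $A_2$, $\mathfrak a_0$ is the line $\R(s_{\alpha_1}+s_{\alpha_2})$, and $I=\emptyset$, so $W_I$ is trivial and its average is the identity, not $\pi$. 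Nor does any subgroup of $W$ have fixed space exactly $\mathfrak a_0$ here (that would require a reflection in $\alpha_1-\alpha_2$, which is not a root). So your key step has no proof as stated.

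The correct averaging is over the Galois group rather than a Weyl subgroup, and this is what the paper does: for $\alpha_0\in\Phi_0^+$ the fiber $\{\alpha\in\Phi: r(\alpha)=\alpha_0\}$ is stable under $\mathrm{Gal}(k/\Q)$ (since $r\circ\sigma=r$), hence its sum is Galois-invariant and therefore vanishes on $\mathfrak a_1$, giving $\sum_{r(\alpha)=\alpha_0}\alpha=m\alpha_0'$ and so $s_{\alpha_0}=\tfrac1m\sum_{r(\alpha)=\alpha_0}s_\alpha$. Since the whole fiber lies in $\Phi(\mathfrak u)$ (again by minimality of $\mathbf P$ over $\Q$), this exhibits each $s_{\alpha_0}$ as a nonnegative combination of the generators $s_\alpha$, $\alpha\in\Phi(\mathfrak u)$, and summing over all $\alpha_0\in\Phi_0^+$ gives strictly positive coefficients for every $\alpha\in\Phi(\mathfrak u)$, whence $\mathfrak a_{0,\mathfrak u}^+\subseteq\mathfrak a_{\mathfrak u}^+\cap\mathfrak a_0$ directly (this also spares you the passage to interiors, which by itself would additionally require knowing that the open cone $\mathfrak a_{\mathfrak u}^+$ actually meets $\mathfrak a_0$). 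You correctly identified this as the delicate point, but the tool you reached for does not close it.
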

\begin{proof}
	 Let $\mathbf A_1$ be the maximal $\Q$-anisotropic torus of  $\mathbf A$ and let 
	 $\mathfrak a_1$ be the Lie algebra of $A_1:=\mathbf A_1(\R)^\circ$. 
	 Then $A=A_0A_1$ is  a direct product and $\mathfrak a=\mathfrak a_0\oplus \mathfrak a_1$ is an orthogonal 
	 decomposition with respect to the Killing form, see 
	 \cite[Proposition 8.15]{borel}. For all $\alpha\in \Phi$ the element $s_{r(\alpha)}$ is the orthogonal projection of $s_\alpha$ to
	 $\mathfrak a_0$. 
	 Let $\lambda_0\in \mathfrak a^*_0$ be a nonzero  dominant  weight with respect to $\Phi_0^+$ and
	 let  $\lambda\in \mathfrak a^*$ be the extension 
	 of $\lambda_0$ by taking $\lambda(\mathfrak a_1)=\{0\}$.  Then $\lambda$ is  dominant 
	 with respect to $\Phi^+$. In particular $\lambda\in\mathcal C$ which is defined 
	 in (\ref{eq;dual cone}). Therefore for all $s\in \mathfrak a_{\mathfrak u}^+\cap \mathfrak a_0 $
	 one has 
	 \[
	 \lambda_0(s)=\lambda(s)> 0
	 \]
	 by Lemma \ref{lem;new interior}.
	 Hence 
	 $\mathfrak a_{\mathfrak u}^+\cap \mathfrak a_0 \subset \mathfrak a_{0, \mathfrak u}^+$.

Let $k$ be a finite extension of $\Q$  in $\R$ such that  $\mathbf A$  is
 $k$-split. 
 The group $\mathrm{Gal}(k/\Q)$ acts naturally  on $\Phi$ and hence on $V=\mathrm{span}_\Q\, \Phi$. 
Note that 
 $r(\sigma(\alpha))=r(\alpha)$ for all  $\alpha\in V$ and $\sigma \in 
\mathrm{Gal}(k/\Q) $.
 Also,  $\alpha\in V$ vanishes on $\mathfrak a_1$ if and only if $\sigma(\alpha)=\alpha$
 for all $\sigma\in \mathrm{ Gal}\, (k/\Q)$.
  It follows from the above two observations  that for   $\alpha_0\in \Phi_0^+$
\begin{align*}
\sum_{\alpha\in \Phi, r(\alpha)=\alpha_0} \alpha=m\alpha_0',
\end{align*}
where $m$ is a positive integer and $\alpha_0'\in \mathfrak a^*$ is the extension of $\alpha_0$
by taking  $\alpha_0'(\mathfrak a_1)=\{0\}$. 
Therefore 
\begin{align}\label{eq;sanya}
s_{\alpha_0}= s_{\alpha_0'}=\frac{1}{m}\sum_{\alpha\in \Phi, r(\alpha)=\alpha_0}s_\alpha.
\end{align}
Since $\alpha\in\Phi(\mathfrak u)$ if and only if $r(\alpha)\in \Phi_0^+$, 
(\ref{eq;sanya}) implies  $\mathfrak a_{0, \mathfrak u}^+\subset \mathfrak a_{\mathfrak u}^+\cap \mathfrak a_0$. 

For the  second assertion, we recall that the cone $\mathfrak a_{\mathfrak u}^+$ is a  polyhedral cone with $0$ as a vertex. 
Therefore the boundary of 
 $\mathfrak a_{0,\mathfrak u}^+$ in $\mathfrak a_0$ is exactly the intersection of $\partial \mathfrak a_{\mathfrak u}^+$ with 
 $\mathfrak a_0$.  The second assertion follows from the observation that a sequence of points in the polyhedral cone drift away
 from walls if and and only if the distance of the sequence to each hyperplane of the boundary goes to infinity (cf. Lemma \ref{lem;drift}). 
\end{proof}

Suppose  $\{ s_i\}\subset \mathfrak a_{0, \mathfrak u}^+$  drifts away from  walls in
$\mathfrak a_0$. Let  $a_i=\exp s_i $ and  $X=H/{\mathbf H}(\Z)\cap H$. 
Let  $\nu$ be  a probability measure on $X$ 
supported a $U$ orbit and absolutely 
continuous with respect to the volume measure of this orbit. 
By Proposition \ref{prop;intersect} and   Theorem \ref{thm;translate}, one has 
$a_i \nu$ converges to the probability Haar measure on $X$ as $i\to \infty$. 
The same equidistribution result holds for any probability measure 
on $X$ whose disintegration into measures on $U$ orbits   have the same property as 
$\nu$. 
In particular, this  applies to homogeneous  measures considered in \cite{ghsy} and gives a proof of a special case of \cite[Theorem 1(\rmnum{2})]{ghsy}.
 

\section{Quantitative nonescape of mass
}
\label{sec;U slice}

In this section we  prove Theorem \ref{thm;general}.  Recall that   $H$ is  
a subgroup of a connected semisimple Lie group $G$, $\Gamma$ is a lattice of $G$ and $X=G/\Gamma$. 

Let $\mathfrak u$ be the Lie algebra of $U$ and we endow $\mathfrak u$ with an 
inner product structure  such that the corresponding 
Lebesgue measure $\mu_{\mathfrak u}$ is mapped to the Haar measure $\mu_U$ through  the 
exponential map, see  \cite[Theorem 1.2.10 (a)]{cg}. 
We use $B_r^{\mathfrak u}$ to denote the open  ball of radius $r$ centered at $0\in \mathfrak u$ with respect to 
the metric induced from this inner product. 
 For $r>0$ there exists $\ell\ge 1$ such that  
\begin{align}\label{eq;lie algebra}
 \exp B_{ r/\ell}^{\mathfrak u}\subset B_r^U\subset \exp B_{\ell r}^{\mathfrak u}. 
\end{align}
Since we allow the implied  constant in (\ref{eq;better}) depends on $r$ we can work with $(\mathfrak u, \mu_{\mathfrak u})$
instead of $(U , \mu_U)$.

\subsection{Arithmetic quotient}\label{sec;arithmetic}

\begin{lem}\label{lem;norm}
Let $V$ be a real vector space with norm $\|\cdot\|$ and 
let $\rho: H\to \GL(V)$ be  a continuous homomorphism. For every 
 $r>0$ there exists $t >0$ (depending on $\rho, r $ and $\|\cdot\|$) such that for any $ a\in \overline{A_U^+}$ and any $v\in
 V\setminus\{0\}$ one has
 \begin{align}\label{eq;norm}
 \sup_{h\in B_r^U} \frac{ \|\rho(ah)v\|}{\|v\|}\ge t.
 \end{align}
\end{lem}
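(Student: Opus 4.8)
The plan is to reduce to finitely many irreducible representations and then exploit the defining property of $\overline{A_U^+}$. First I would decompose $V$ (as an $H$-module, after possibly passing to the connected component of the image or lifting to a cover — but since $\rho$ is given on $H$ this is harmless) into a direct sum of irreducible subrepresentations $V = \bigoplus_{j} V_j$. Fix a vector $v\neq 0$ and write $v = \sum_j v_j$; pick an index $j$ with $v_j\neq 0$ and $\|v_j\|\gtrsim \|v\|$ (with an implied constant depending only on the fixed decomposition and norm). Since the norm on $V$ restricted to $V_j$ is equivalent to any fixed norm on $V_j$, it suffices to prove the bound on each irreducible $V_j$ separately, i.e.\ I may as well assume $(\rho,V)$ is irreducible.

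Next I would treat the trivial representation trivially: if $V$ is the trivial $H$-module then $\rho(ah)v = v$ and the bound holds with $t=1$. So assume $(\rho,V)$ is nontrivial and irreducible. Here the key input is the definition of $\overline{A_U^+}$ via the expanding-cone machinery: for $a\in A_U^+$ the space $V^U$ lies in $V_a^+$, and more relevantly, by Theorem \ref{thm;open} together with Lemma \ref{lem;cone} and Lemma \ref{lem;new interior}, every $\mathfrak a$-weight $\beta$ occurring on $V^{\mathfrak u}=V^U$ satisfies $\beta \in \mathcal C$, hence $\beta(\log a)\geq 0$ for all $a\in\overline{A_U^+}$. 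In particular the eigenvalues of $\rho(a)$ on $V^U$ are all $\geq 1$, so $\|\rho(a)w\|\gg \|w\|$ uniformly for $a\in\overline{A_U^+}$ and $w\in V^U$ (the subspace $V^U$ is $A$-invariant, so $\rho(a)$ acts on it, and on a fixed subspace an operator with eigenvalues bounded below in modulus has inverse of bounded norm — but one must be a little careful about nilpotent parts, so I would instead note $\rho(a)$ restricted to $V^U$ is diagonalizable with positive eigenvalues $\geq 1$ because $a\in A$ is $\Ad$-diagonalizable and $V^U$ is spanned by weight vectors).

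The remaining and genuinely substantive step is to pass from control on $V^U$ to control on all of $V$ using the $B_r^U$-average. The mechanism is: for arbitrary $v\neq 0$, the orbit $\{\rho(h)v : h\in U\}$ cannot stay uniformly small in the directions transverse to $V^U$ — more precisely, since $U$ is unipotent and $V$ is irreducible nontrivial, one shows there is a constant $c=c(\rho,r)>0$ with $\sup_{h\in B_r^U}\|\rho(h)v\|\geq c\,\|v\|$ and moreover that the "$V^U$-component" of some $\rho(h)v$ with $h\in B_r^U$ is $\gg \|v\|$; here I would use compactness of $B_r^U$ and the fact that $V^U$ is exactly the fixed space, arguing by contradiction that otherwise $v$ (or a limit) would generate a proper $U$-invariant — in fact $H$-invariant — subspace, contradicting irreducibility. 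One convenient packaging: the map $v\mapsto \sup_{h\in B_r^U}\|(\text{proj to }V^U)\rho(h)v\|$ is a continuous, positively-homogeneous, $U$-semi-invariant function that is strictly positive on $V\setminus\{0\}$ (strict positivity again by irreducibility and unipotence of $U$), hence by homogeneity and compactness of the unit sphere it is $\geq c\|v\|$. Combining: choose $h_0\in B_r^U$ with $\|\mathrm{proj}_{V^U}\rho(h_0)v\|\geq c\|v\|$; then $h\mapsto ah$ with $h$ ranging over $B_r^U$ — wait, one needs $ah_0 = (a h_0 a^{-1}) a$ and $ah_0a^{-1}\in U$ since $A$ normalizes $U$, but it need not lie in $B_r^U$, so instead I would apply the $V^U$-lower bound directly: $\|\rho(ah_0)v\| = \|\rho(a)\rho(h_0)v\| \geq \|\rho(a)\,\mathrm{proj}_{V^U}\rho(h_0)v\| - \|\rho(a)(\mathrm{id}-\mathrm{proj}_{V^U})\rho(h_0)v\|$, which is not obviously good. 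Cleaner: decompose $\rho(h_0)v$ along the $A$-weight spaces, note $\rho(a)$ acts on the $V^U$-part with eigenvalues $\geq 1$ and project onto the weight-$\beta_0$ line where $\beta_0$ realizes $\geq c\|v\|$; then $\|\rho(ah_0)v\|\geq \|\mathrm{proj}_{\beta_0}\rho(a)\rho(h_0)v\| = e^{\beta_0(\log a)}\|\mathrm{proj}_{\beta_0}\rho(h_0)v\|\geq 1\cdot c'\|v\|$. Thus $t = c'$ works, uniformly over $\overline{A_U^+}$.

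I expect the main obstacle to be the last paragraph: making rigorous the claim that the $V^U$-component of $\rho(h)v$, maximized over $h\in B_r^U$, is bounded below by $c\|v\|$ for \emph{every} nonzero $v$, with a constant independent of $v$. The ingredients are (i) $V^U$ is precisely the fixed space and $U$ is unipotent, so the "leading term" of $\rho(h)v$ as $h$ runs through $U$ lands in $V^U$ — this is a standard fact about unipotent actions (Kolchin / the structure of $\rho(\exp Y)v = \sum \frac{1}{j!}\rho(Y)^j v$), and (ii) irreducibility, to rule out the degenerate case where the whole $U$-orbit of $v$ avoids a neighborhood of $V^U$. Then (iii) homogeneity plus compactness of the unit sphere upgrades a pointwise positive lower bound to a uniform one. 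I would present this as a short lemma-within-the-proof and cite \cite[Proposition 6.52]{knapp}-style $\mathfrak{sl}_2$ facts or the earlier Lemma \ref{lem;epimorphic}/Lemma \ref{lem;cone} only where the weight bookkeeping for $\overline{A_U^+}$ is needed.
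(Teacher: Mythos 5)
Your proposal is correct and follows essentially the same route as the paper's proof: an adapted inner product in which $\rho(A)$ is diagonal, the observation that $\rho(a)$ commutes with the orthogonal projection onto $V^U$ and acts there with eigenvalues $\ge 1$ for $a\in\overline{A_U^+}$, and a uniform lower bound $\inf_{\|v\|=1}\sup_{h\in B_r^U}\|\mathrm{proj}_{V^U}\rho(h)v\|>0$ obtained from Engel's theorem (your Kolchin-type argument) plus compactness of the unit sphere. The only cosmetic differences are that the paper splits $V$ into the trivial part and an $H$-complement rather than into irreducibles, and it concludes directly from the projection onto all of $V^U$ rather than onto a single weight line.
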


\begin{proof}
We decompose $V$ into the direct sum of   $H$-invariant subspaces  $V_1\oplus V_2$ where 
$H$ acts trivially on $V_1$ and $H$ does not  leave  any nonzero vector invariant  in $V_2$. 
Since different   norms on $V$ are equivalent,   we assume without loss of generality that
$\|\cdot\|$ is a Euclidean norm
induced from an inner product 
 with the following properties:
\begin{itemize}
	\item  $V_1$ and $V_2$ 	are orthogonal;
	\item  there exists an   orthonormal basis of $V_2$ under which $\rho(A)$ is diagonal.
\end{itemize}
Let $\pi: V\to V_2^U$ be the orthogonal projection.
Since $V_2^U$ is $A$-invariant, one has  
$\rho(a)\pi=\pi\rho(a)$  for all  $a\in A$.
 Since for  $a\in \overline {A_U^+}$ the  
  eigenvalues of $\rho(a)$ on $V^U_2$ are all greater than or equal to $1$,
one has 
  \begin{align}\label{eq;norm 1}
  \|\rho(a)v\|\ge \|v\| \quad \forall a\in \overline {A_U^+},v\in V_2^U.
  \end{align}

 We first  show  that
\begin{align}\label{eq;norm 2}
t_1:=\inf_{v\in V_2, \|v\|=1}\sup_{h\in B_r^U} \|\pi(\rho(h)v)\|
\end{align}
  is positive. 
  Let $v\in V_2 $ be a nonzero vector. We
 claim  that 
there exists $h\in B_r^U$ such that $\pi(\rho(h)v)\neq 0$.
Suppose the contrary, then the subspace $V_2'$ generated by $\rho(U)v$ is orthogonal 
to $V_2^U$. 
By Engel's  theorem for the  nilpotent Lie algebra $\mathfrak u$ acting  on the space $V_2'$, there exists a nonzero $U$-invariant 
vector 
$v'\in V_2'$. Since  $v'\in V_2^U$, it is not orthogonal to $V_2^U$. This contradiction 
proves the claim. 
Therefore for every 
$v\in V_2$ there exists $t_v>0$ and an open neighborhood $N_v$ of $v$  in $V_2$
 such that  for every $v'\in N_v$
\[
\sup_{h\in B_r^U} \|\pi(\rho(h)v')\|\ge t_v. 
\]
 Since $\{v\in V_2: \|v\|=1\}$ is compact one has $t_1>0$. 

Let $a\in \overline{A_U^+}$ and   $v\in V$ with $\|v\|=1$.  We write $v=v_1+v_2$ according to the orthogonal decomposition $V=V_1\oplus V_2$
and estimate  
\begin{align*}
\|\rho(ah)v\|^2&=\|v_1\|^2+\|\rho(ah)v_2\|^2  && V_1 \mbox{ is orthogonal to }V_2\\
& \ge \|v_1\|^2+\|\pi(\rho(ah)v_2)\|^2  && \pi \mbox{ is orthogonal projection}\\
& \ge \|v_1\|^2+\|\rho(a)\pi(\rho(h)v_2)\|^2  && \rho(a) \mbox{ and } \pi \mbox{ commutes}\\
& \ge \|v_1\|^2+\|\pi(\rho(h)v_2)\| ^2&& \mbox{by } (\ref{eq;norm 1}). 
\end{align*}
By  above inequality and   (\ref{eq;norm 2}) one has 
\begin{align*}
\sup_{h\in B_r^U} \|\rho(ah)v\| \ge 
\left\{
\begin{array}{ll}
1/{2} & \mbox{if }\|v_1\|\ge 1/2\\
{t_1}/{2} & \mbox{if }\|v_2\|\ge 1/2.
\end{array}
\right.
\end{align*}
Therefore  (\ref{eq;norm}) holds for $t=\min\{{1}/{2}, {t_1}/{2}\}$. 
\end{proof}


\begin{lem}\label{lem;arithmetic}
Let 	$\Gamma_0=SL_n(\Z)\cap G_0$  be a lattice of a closed subgroup 
$G_0$ of $SL_n(\R)$
and let   $\rho: H\to G_0$ be a continuous homomorphism. 
Then 
there exists $\delta>0$ such that 
for any compact subset $L$ of $Y=G_0/\Gamma_0$, for any $r, \varepsilon>0$ and any $a\in 
\overline{A_U^+}, y\in L$
\begin{align*}
\mu_U(\{h\in B_{r}^{ U }: \rho(a h)y\not\in \mathrm{Inj}_{\varepsilon}^Y\}) \ll_{L, r} \varepsilon ^{\delta}.
\end{align*}  
\end{lem}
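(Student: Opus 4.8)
\textbf{Proof plan for Lemma \ref{lem;arithmetic}.}
The plan is to reduce the statement to the classical quantitative nonescape-of-mass estimate of Kleinbock–Margulis on the space of unimodular lattices, using the $(C,\alpha)$-good machinery and the representation-theoretic lower bound of Lemma \ref{lem;norm}. First I would recall that $Y=G_0/\Gamma_0$ embeds, via Mahler-type compactness, into (a finite union of copies of) the space $\SL_n(\R)/\SL_n(\Z)$ of unimodular lattices; more precisely, the condition $y\notin \mathrm{Inj}_\varepsilon^Y$ can be detected by the shortest nonzero vector of the corresponding lattice, or by the existence of some nonzero vector $v$ in one of finitely many $G_0$-invariant lattices in exterior powers of $\R^n$ whose norm $\|\rho(g)v\|$ (for $g$ a lift of $y$) is $\ll \varepsilon^{c}$ for an appropriate $c>0$. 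The key point is that the relevant "distance to the cusp" is controlled by polynomially many such linear functionals $g\mapsto \|g v\|$ on $G_0$, ranging over a finite set of primitive lattice vectors $v$ in a fixed finite collection of representations $(\rho_j, V_j)$ of $G_0$.

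Next I would invoke the $(C,\alpha)$-good property of the coordinate functions of the $U$-action: for each representation $V_j$ and each $v\in V_j$, the map $h\mapsto \log\|\rho(\rho_j(ah))v\|$, restricted to $h$ in a ball $B_r^U\cong B_{\ell r}^{\mathfrak u}\subset \mathfrak u$ via the exponential map (using (\ref{eq;lie algebra})), is a logarithm of a norm of a polynomial map of bounded degree in the exponential coordinates on $\mathfrak u$; hence it is $(C,\alpha)$-good with constants depending only on $\rho_j$, $r$, and the degree, which is bounded uniformly. This is the standard fact (as in \cite{km98}, \cite{klw04}) that $\|\text{polynomial of degree} \le D\|$ is $(C,\alpha)$-good with $\alpha=1/D$ and $C$ depending on $D$ and the dimension. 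The uniformity over $a\in\overline{A_U^+}$ comes precisely from Lemma \ref{lem;norm}: it guarantees that the supremum of $\|\rho(ah)v\|/\|v\|$ over $h\in B_r^U$ is bounded below by a constant $t>0$ independent of $a$ and $v$, which is exactly the hypothesis needed to run the Kleinbock–Margulis covering/integration argument and conclude $\mu_U(\{h\in B_r^U : \|\rho(ah)v\| < \varepsilon\}) \ll (\varepsilon/t\|v\|)^{\alpha} \mu_U(B_r^U) \ll_{L,r} \varepsilon^{\alpha}$ when $\|v\|$ is bounded below (which holds for $v$ a primitive vector in a fixed lattice, and for $y\in L$ compact after rescaling).

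Then I would combine these estimates: $\{h\in B_r^U : \rho(ah)y\notin \mathrm{Inj}_\varepsilon^Y\}$ is contained in a finite union (over the finitely many representations $\rho_j$ and the primitive vectors $v$ that can realize a short vector, of which there are polynomially many but, crucially, the relevant ones giving the obstruction form a set over which we take a union of controlled size) of sets of the form $\{h : \|\rho(\rho_j(ah))v\| < C\varepsilon^{c}\}$, and then apply the previous paragraph's bound to each, with $\delta$ the minimum of the resulting exponents $c\cdot\alpha_j$ over the finite collection. The compactness of $L$ enters to bound below the norms $\|v\|$ of the lifts uniformly and to absorb the number of relevant vectors into the implied constant $\ll_{L,r}$. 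The main obstacle I anticipate is not any single estimate but the bookkeeping of the reduction to lattices: one must verify that "small injectivity radius on $Y$" really is governed by finitely many norm functions in finitely many $G_0$-representations with primitive-vector normalization — this is where one uses that $\Gamma_0=\SL_n(\Z)\cap G_0$ is arithmetic and the Borel–Harish-Chandra / reduction theory, together with the elementary fact that a unimodular lattice has small injectivity radius iff it has a short nonzero vector. Everything else is an application of the now-standard $(C,\alpha)$-good formalism, with Lemma \ref{lem;norm} supplying the one extra ingredient (uniformity in $a\in\overline{A_U^+}$) beyond the Euclidean case.
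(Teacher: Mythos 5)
Your proposal follows essentially the paper's route: reduce to $SL_n(\R)/SL_n(\Z)$ (the paper does this by noting the injectivity radius only decreases when passing to the closed subgroup under a Riemannian embedding), pass to exponential coordinates on $\mathfrak u$ via (\ref{eq;lie algebra}), observe that $u\mapsto\|\rho(a\exp u)v\|^2$ is a polynomial of bounded degree and hence $(C,\alpha)$-good by \cite[Lemma 3.2]{bkm}, and then feed in Lemma \ref{lem;norm} together with Mahler compactness on the base point $y\in L$ to get the crucial $a$-uniform lower bound on $\sup_{h\in B_r^U}\|\rho(ah)v\|$. The paper then converts $\mathrm{Inj}_\varepsilon^Y$ to the short-vector condition $K_\varepsilon$ via \cite[Proposition 3.5]{km12} and closes by a direct appeal to \cite[Theorem 2.2]{k08}.

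One point worth tightening in your write-up: you describe the final combining step as a finite union over primitive vectors $v$ in finitely many representations, followed by a union bound. Taken literally that step fails, since as $h$ ranges over $B_r^U$ infinitely many lattice vectors can become short; the quantitative nondivergence theorem of Kleinbock--Margulis handles this via a covering argument that treats all exterior powers $\bigwedge^i\Z^n$ (equivalently, sublattices/flags) simultaneously, and the hypothesis that must be verified is the $i$-dependent lower bound $\sup_{u\in B_r^{\mathfrak u}}\|\rho(a\exp u)gv\|\ge t^i$ for pure tensors $v\in\bigwedge^i\Z^n$. You do gesture at the right tool (``the Kleinbock--Margulis covering/integration argument''), but the clean and correct move — the one the paper takes — is to verify the hypotheses of \cite[Theorem 2.2]{k08} and invoke it as a black box rather than assembling a union bound by hand.
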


\begin{proof}

Since different right invariant  Riemannian structures  on $G_0$ will induce equivalent metrics on $G_0$ and $Y$, we
assume without loss of generality that the inclusion map 
$G_0\to SL_n(\R)$ is a Riemannian embedding. 
It follows that 
 \[
 I(g\Gamma_0, G_0/\Gamma_0)\ge I(g SL_n(\Z), SL_n(\R)/ SL_n(\Z))
 \]
 for  all $g\in G_0$.
 So it suffices to prove the lemma in the case  where $L_0=SL_n(\R)$ and 
 $\Gamma_0=SL_n(\Z)$,  which we will assume in the rest of the  proof.

  Recall that the Lebesgue measure on  $\mathfrak u$ is mapped to $\mu_U$
 by the exponential map and for any $r>0$ there exists $\ell\ge 1$ such that  (\ref{eq;lie algebra}) holds. 
So it suffices to show  that there exists $\delta>0$ such that 
for any compact subset $L$ of $Y$,  any  $r, \varepsilon>0$ and any $a\in \overline{A_U^+}$,  $y\in L$
\begin{align*}
\mu_{\mathfrak u}(\{u\in B_{r}^{ \mathfrak u }: \rho( a (\exp u))y\not\in \mathrm{Inj}_\varepsilon^Y\})\ll_{L} \varepsilon ^{\delta}\mu_{\mathfrak u}(B_{r}^{ \mathfrak u }).
\end{align*}

  Let $\|\cdot\|$ be the standard Euclidean norm 
 on $V=\oplus_{i=0}^n \bigwedge ^i\R^n$. 
 For $\varepsilon>0$ we let 
 \[
 K_\varepsilon=\{g\Gamma_0\in Y: \inf_{v\in \Z^n\setminus \{0\}} \|gv\|\ge \varepsilon\}.
 \]
 By \cite[Proposition 3.5]{km12} it suffices to show that 
 there exists $\delta>0$ such that 
for any compact subset $L$ of $Y$,   any $r, \varepsilon>0$ and
any  $a\in 
\overline{A_U^+}$, $y\in L$
\begin{align}\label{eq;arithmetic 1}
\mu_{\mathfrak u}(\{u\in B_{r}^{ \mathfrak u }:\rho( a (\exp u))y\not\in K_{\varepsilon}\}) \ll_{L} \varepsilon ^{\delta}\mu_{\mathfrak u}(B_r^{\mathfrak u}).
\end{align}

 For every nonzero 
 $v\in V$ and $a\in \overline{A_U^+}$ we define  the  map 
 $\psi _{a, v}: \mathfrak u\to \R$ by $\psi _{a, v}(u)=\|\rho(a\exp (u)) v\|$.
 Since $\psi _{a,v}(u)^2$ 
  is a polynomial map 
 with degree at most $k$ where $k$ is determined by $\rho$, 
 \cite[Lemma 3.2]{bkm} implies 
$\psi _{a, v}(u)$ is $(C, \alpha)$-good on $\mathfrak u$ where  $C, \alpha>0$
are positive constants depending only on $\rho$. 
Since $L$ is compact, by Mahler's compactness criterion  there exists $t_0>0$ such that for any $g\Gamma_0\in L$ and  any
nonzero pure tensor  
$v\in \oplus_{i=0}^{n}\bigwedge^i \Z^n$ one has 
\begin{align*}
\|gv\|\ge t_0>0. 
\end{align*}
It follows from 
this observation and Lemma \ref{lem;norm}  that  there exists $t>0$ depending on $L$ such that for any
$0\le i\le n$,  any nonzero pure tensor
$v\in \bigwedge ^i \Z^n$, any 
$a\in \overline{A_U^+}$ and any $g\Gamma_0\in L$ one has 
\[
\sup_{u\in B_r^{\mathfrak u}}\|\rho(a(\exp u))g v\|\ge t^i. 
\]
Therefore (\ref{eq;arithmetic 1}) holds for $\delta=\alpha$ according to  \cite[Theorem 2.2]{k08},  
which completes the proof.

 \end{proof}

\subsection{Quotient of rank one Lie group}\label{sec;rank one}

\begin{lem}\label{lem;rank one}
Let $\Gamma_0$ be a lattice of 
 a connected real  rank one semisimple  Lie group
 $G_0 $. Let $\rho: H\to G_0 $ be a continuous homomorphism. 
Then there  exists $\delta>0$ such that 
for any compact subset $L$ of $Y=G_0 /\Gamma_0$,  any $r, \varepsilon>0$ and  any $a\in 
\overline{A_U^+}$, $y\in L$
\begin{align*}
\mu_U(\{h\in B_{r}^{ U }: \rho(a h)y\not\in \mathrm{Inj}_{\varepsilon}^Y\}) \ll_{L, r} \varepsilon ^{\delta}.
\end{align*}  

\end{lem}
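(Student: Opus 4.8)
The plan is to run the proof of Lemma~\ref{lem;arithmetic} again, replacing the single arithmetic cusp of an $\SL_n(\Z)$-type lattice by the finitely many cusps of $Y$ supplied by reduction theory, and feeding everything into the same Kleinbock--Margulis quantitative non-divergence machinery. First I would pass to the adjoint group: replacing $G_0$ by $\Ad(G_0)$ and $\Gamma_0$ by its image changes $Y$, its metric and the injectivity radii only up to multiplicative constants, so we may assume $G_0$ is a connected semisimple real algebraic group of real rank one and fix a faithful rational representation $G_0\hookrightarrow \SL_N(\R)$. Because $\rho$ is analytic and $U$ is unipotent, $\rho(a\exp u)=\rho(a)\exp(d\rho(u))$ with $d\rho(u)$ nilpotent, so $u\mapsto \rho(a\exp u)\in\SL_N(\R)$ has polynomial matrix entries of degree bounded in terms of $\rho$ and the nilpotency class of $\mathfrak u$ \emph{alone}, uniformly in $a$. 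Hence all functions appearing below are $(C,\alpha)$-good on $\mathfrak u$ with $C,\alpha$ independent of $a$, by \cite[Lemma 3.2]{bkm}, and by \eqref{eq;lie algebra} we may work on Euclidean balls $B_r^{\mathfrak u}$ in $\mathfrak u$ with $\mu_{\mathfrak u}$.

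Next, by reduction theory for lattices in real rank one semisimple Lie groups, $Y$ has finitely many cusps, each modeled on a single horospherical $\R$-coordinate. I would encode them linearly: there are a representation $(\sigma,W)$ of $G_0$ with a fixed norm $\|\cdot\|$, finitely many nonzero vectors $w_1,\dots,w_m\in W$ (one per cusp, normalised so that $\mathrm{Stab}_{\Gamma_0}(w_j)$ contains the unipotent radical of the corresponding parabolic), and constants $c,q\ge 1$, such that each set $\Gamma_0 w_j\subset W$ is discrete and its translates satisfy the configuration hypotheses of the Kleinbock--Margulis non-divergence theorem, and
\[
I(g\Gamma_0,Y)\ \ge\ c^{-1}\Big(\inf_{1\le j\le m}\ \inf_{\gamma\in\Gamma_0}\|\sigma(g)\sigma(\gamma)w_j\|\Big)^{q}\qquad(g\in G_0).
\]
For $\mathrm{SL}_2(\R)$ and $\mathrm{SO}(n,1)$ this comparison is classical; in general it is exactly the substance of rank one reduction theory. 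Writing $y=g_y\Gamma_0$, this inclusion shows that the set in \eqref{eq;better}, transported to $\mathfrak u$, is contained in $\bigcup_{j=1}^m\{u\in B_r^{\mathfrak u}:\ \exists\,\gamma\in\Gamma_0,\ \|\sigma(\rho(a\exp u))\,\sigma(g_y\gamma)w_j\|<(c\varepsilon)^{1/q}\}$.

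Now fix a cusp $j$. Each function $u\mapsto\|\sigma(\rho(a\exp u))\sigma(g_y\gamma)w_j\|$ is the norm of a vector-valued polynomial of bounded degree, hence $(C,\alpha)$-good; and applying Lemma~\ref{lem;norm} to the homomorphism $\sigma\circ\rho\colon H\to\GL(W)$ gives, uniformly for $a\in\overline{A_U^+}$, $\sup_{u\in B_r^{\mathfrak u}}\|\sigma(\rho(a\exp u))\sigma(g_y\gamma)w_j\|\ge t\,\|\sigma(g_y\gamma)w_j\|\ge t\,t_0$, where $t_0=t_0(L)>0$ is a lower bound for $\inf_{j,\gamma}\|\sigma(g_y)\sigma(\gamma)w_j\|$ valid for all $y\in L$ — this is the only place $y\in L$ is used, and it follows from the displayed comparison together with the fact that a compact $L$ lies in some $\mathrm{Inj}_{\varepsilon_0}^Y$. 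This is precisely the cone property entering here: $a\in\overline{A_U^+}$ is what makes Lemma~\ref{lem;norm} applicable. These are exactly the hypotheses of the Kleinbock--Margulis quantitative non-divergence theorem in the form \cite[Theorem 2.2]{k08} used in the proof of Lemma~\ref{lem;arithmetic} (see also \cite{km12}), which yields $\mu_{\mathfrak u}(\{u\in B_r^{\mathfrak u}:\exists\,\gamma,\ \|\sigma(\rho(a\exp u))\sigma(g_y\gamma)w_j\|<(c\varepsilon)^{1/q}\})\ll_L \varepsilon^{\alpha/q}\mu_{\mathfrak u}(B_r^{\mathfrak u})$. Summing over the finitely many cusps and translating back through \eqref{eq;lie algebra} gives the lemma with $\delta=\alpha/q$.

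\textbf{Main obstacle.} The genuinely rank-one input is the middle step: producing, for a possibly \emph{non-arithmetic} lattice $\Gamma_0$, a single linear representation $W$ together with a $\Gamma_0$-invariant discrete family of ``cusp vectors'' $\bigcup_j\Gamma_0 w_j$ whose configuration obeys the finiteness/covering hypotheses of \cite[Theorem 2.2]{k08}, and the comparison of $I(g\Gamma_0,Y)$ with a fixed power of $\inf_w\|\sigma(g)w\|$. Real rank one is used here to guarantee finitely many cusps each governed by a \emph{single} exponential, so the heights can be realized inside one $W$; the rest of the argument (uniformity in $a$, the degree bounds, the lower bound via Lemma~\ref{lem;norm}) is essentially identical to the arithmetic case.
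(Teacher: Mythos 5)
Your route is genuinely different from the paper's. The paper does not relinearize the cusps itself: it first uses the Garland--Raghunathan compactness criterion (Lemma \ref{lem;compactness}, quoted from \cite{kw13}) together with Lemma \ref{lem;norm} to produce, for each $y\in L$ and each $a=a_t\in\overline{A_U^+}$, an element $h_{y,t}\in B_1^U$ with $z_{y,t}=a_th_{y,t}y$ in a fixed $\mathrm{Inj}_{\varepsilon_0}^Y$; this recenters the problem so that it becomes quantitative nonescape for the conjugated unipotent trajectories $s\mapsto a(\exp su)a^{-1}z$ based at points of bounded injectivity radius, and that statement is then quoted as a black box from Buenger--Zheng \cite[Theorem 1.1]{bz} (applied on one-dimensional rays and integrated in polar coordinates over $B_r^{\mathfrak u}$). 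You instead propose to rerun the proof of Lemma \ref{lem;arithmetic} directly, which is essentially what \cite{bz} does internally; the advantage of the paper's route is that all the rank-one--specific reduction theory is outsourced.

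Two steps of your proposal need more care than you give them. First, the ``main obstacle'' you flag is indeed resolved by Garland--Raghunathan, but the object it produces is the discrete set $\Delta=\{\gamma g_iv_0\}\subset\bigwedge^k\mathfrak g_0$, which is \emph{not} a lattice and carries no poset of primitive subgroups; consequently \cite[Theorem 2.2]{k08}, which is formulated for $h(x)\Z^N$ and its flags of primitive subgroups, does not apply verbatim as you claim. What saves the day in rank one is the separation property (item (3) of Lemma \ref{lem;compactness}): at most one $v\in\Delta$ can satisfy $\|gv\|<r_0$, so a single-function $(C,\alpha)$-good covering argument suffices, with no combinatorics of flags. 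You should invoke that property explicitly rather than the $\Z^N$ theorem. Second, your displayed comparison $I(g\Gamma_0,Y)\ge c^{-1}(\inf\|\sigma(g)\sigma(\gamma)w_j\|)^q$ is the rank-one analogue of \cite[Proposition 3.5]{km12} and is true, but it is not contained in Lemma \ref{lem;compactness} (which is purely qualitative) and requires its own argument about horoball neighborhoods of the cusps; the paper sidesteps it entirely because \cite[Theorem 1.1]{bz} is already stated in terms of $\mathrm{Inj}_\varepsilon^Y$ (indeed the paper remarks that one must track the explicit constant $c_\varepsilon$ in the proof of \cite{bz} to get the power $\varepsilon^\delta$). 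With those two points repaired your argument would go through, and the uses of Lemma \ref{lem;norm} and of $a\in\overline{A_U^+}$ are exactly as in the paper.
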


We first recall a compactness criterion of $Y$ due to  Garland and Raghunathan \cite{gr70}. 
Let $\mathfrak g_0$ be the Lie algebra of $G_0$ and let  $V=\bigwedge ^k \mathfrak g_0$
where $k$ is the dimension of the unipotent radical $U_0$ of some  proper parabolic subgroup of $G_0$.
We fix a nonzero element $v_0$ in the one dimensional subspace $\bigwedge^k \mathfrak u_0$ where
$\mathfrak u_0$ is the Lie algebra of $U_0$. 
 The following version 
is taken from Kleinbock-Weiss \cite{kw13}.
\begin{lem}[\cite{kw13} Proposition 3.1]\label{lem;compactness}
There exist $g_1, \ldots, g_m\in  G_0$ such that for 
$\Delta=\{\gamma g_i v_0: \gamma\in 
\Gamma_0, 1\le i\le m\}$ the following holds:
\begin{enumerate}
\item $\Delta $ is a discrete subset of $V$;
\item For any $R\subset G_0$, $\pi(R)\subset Y$ is relatively compact if and only if there exists $r>0$
such that for all $ g\in R$ and $v\in \Delta$ one has $\|gv\|\ge r$;
\item There exists $r_0$ such that for any $g\in G_0$, there is at most one $v\in \Delta$ such that $\|g v\|<r_0$. 
\end{enumerate}
\end{lem}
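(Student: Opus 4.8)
The plan is to obtain Lemma \ref{lem;compactness} from the reduction theory of non-uniform lattices in real rank one Lie groups (Garland--Raghunathan \cite{gr70}); the statement is in essence a repackaging of that theory in terms of norms of vectors in $V=\bigwedge^k\mathfrak g_0$. Fix a minimal parabolic $P_0=M_0A_0U_0$ of $G_0$ (Langlands decomposition, $M_0$ compact) with unipotent radical $U_0$ as in the setup, a maximal compact subgroup $K$ of $G_0$ with Iwasawa decomposition $G_0=KA_0U_0$, and a $K$-invariant Euclidean norm $\|\cdot\|$ on $V$. Since $\mathfrak u_0$ is an ideal of $\mathfrak p_0$ consisting of $\operatorname{ad}$-nilpotent elements, $U_0$ acts trivially on $v_0\in\bigwedge^k\mathfrak u_0$; hence writing $h=kau$ one gets $\|hv_0\|=e^{c\,t(h)}\|v_0\|$, where $t(h)$ is the $A_0$-coordinate of $h$ and $c>0$ is determined by the roots of $\mathfrak a_0$ occurring in $\mathfrak u_0$. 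Thus $\|hv_0\|$ is, up to a fixed affine change of variables, a Busemann cocycle for the parabolic fixed point $\xi_0$ of $P_0$; in particular, for each fixed $g'\in G_0$, the well-defined function $g\Gamma_0\mapsto\inf_{\gamma\in\Gamma_0}\|g\gamma g'v_0\|$ on $Y$ records, up to fixed multiplicative constants, $e^{-(\text{depth of }g\Gamma_0\text{ toward the }\Gamma_0\text{-orbit of }g'\xi_0)}$. This dictionary between vector norms and cusp depth is the bridge for every assertion below.

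Next I would invoke \cite{gr70}: since $\Gamma_0$ is non-uniform in the rank one group $G_0$, there are only finitely many $\Gamma_0$-conjugacy classes of minimal parabolics $Q$ for which $\Gamma_0\cap R_u(Q)$ is a cocompact lattice in the unipotent radical $R_u(Q)$. Choose representatives $Q_1,\dots,Q_m$, elements $g_i\in G_0$ with $Q_i=g_iP_0g_i^{-1}$, and set $\Delta=\{\gamma g_iv_0:\gamma\in\Gamma_0,\ 1\le i\le m\}$, where $g_iv_0$ abbreviates $\bigl(\bigwedge^k\Ad(g_i)\bigr)(v_0)$; when $\Gamma_0$ is cocompact this family is empty, so $m=0$, $\Delta=\emptyset$, and all three assertions are trivial since $Y$ is then compact. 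The same reduction theory furnishes a thick--thin decomposition $Y=Y_{\mathrm{thick}}\sqcup C_1\sqcup\dots\sqcup C_m$ with $Y_{\mathrm{thick}}$ compact and each $C_i$ the \emph{injective} image of a sufficiently deep sub-horoball based at the fixed point $\xi_i=g_i\xi_0$ of $Q_i$, the $C_i$ pairwise disjoint; moreover $\Gamma_0\cap Q_i$ equals, up to a finite group, the unipotent lattice $\Gamma_0\cap R_u(Q_i)$, and since $M_0$ is compact $\Gamma_0\cap Q_i$ fixes $g_iv_0$ up to sign.

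Granting these inputs, the three properties follow. \emph{Discreteness (1)}: the $G_0$-stabiliser of $g_iv_0$ contains $R_u(Q_i)$, hence the cocompact lattice $\Gamma_0\cap R_u(Q_i)$; if $\gamma_ng_iv_0\to w\ne0$ along distinct cosets $\gamma_n(\Gamma_0\cap Q_i)$, then by the dictionary infinitely many distinct points of $Y$ would sit at bounded depth in cusp $i$, contradicting that $C_i$ is an embedded horoball quotient, so $\Gamma_0\cdot g_iv_0$ is discrete and closed; distinct cusps being separated, the finitely many orbits do not accumulate on one another, so $\Delta$ is discrete and $\inf_{w\in\Delta}\|w\|>0$. \emph{(2)}: if $\pi(R)$ is relatively compact, write $g=\omega\gamma_0$ with $\omega$ in a fixed compact set $\Omega$ and $\gamma_0\in\Gamma_0$; then $\gamma_0v\in\Delta$ for $v\in\Delta$ and $\|gv\|=\|\omega(\gamma_0v)\|\ge c_\Omega^{-1}\|\gamma_0v\|\ge r$, with $c_\Omega=\sup_{\omega\in\Omega}\|\omega^{-1}\|_{\mathrm{op}}$ and $r=c_\Omega^{-1}\inf_{w\in\Delta}\|w\|>0$. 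Conversely, if $\pi(R)$ is not relatively compact, pick $g_n\in R$ with $g_n\Gamma_0\to\infty$; the thick--thin decomposition forces a subsequence into some cusp $i$ to unbounded depth, so there are $\gamma_n\in\Gamma_0$ with $\|g_n\gamma_ng_iv_0\|\to0$, defeating any uniform lower bound since $\gamma_ng_iv_0\in\Delta$. \emph{(3)}: if $\|gv\|<r_0$ and $\|gv'\|<r_0$ for $v=\gamma_1g_iv_0$, $v'=\gamma_2g_jv_0$, then $g\Gamma_0$ lies at depth $\ge c'\log(1/r_0)$ both in cusp $i$ along the coset $\gamma_1(\Gamma_0\cap Q_i)$ and in cusp $j$ along $\gamma_2(\Gamma_0\cap Q_j)$; choosing $r_0$ below the threshold at which the $C_i$ are pairwise disjoint and each deep sub-horoball is embedded forces $i=j$ and $\gamma_1^{-1}\gamma_2\in\Gamma_0\cap Q_i$, whence $v'=\pm v$, and one checks that for the chosen $v_0$ both signs cannot occur --- giving ``at most one''.

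The heart of the matter is not any single estimate but this dictionary, together with extracting from \cite{gr70} the finiteness of cusps and the pairwise disjointness and injectivity of \emph{deep} cusp neighbourhoods in a sufficiently quantitative form. I expect property (3) to be the most delicate point: it hinges on the quantitative disjointness of deep horoballs and on pinning down the action of the finite part of the cusp stabilisers on $v_0$, and it also requires care with the left/right and inverse conventions relating the orbit $\Delta$ to the cusps of $Y$.
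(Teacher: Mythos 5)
The paper does not prove this lemma at all: it is imported verbatim as \cite{kw13}, Proposition 3.1, with the surrounding text merely attributing the underlying compactness criterion to Garland--Raghunathan \cite{gr70}. Your reconstruction goes through exactly the source the paper points to --- the reduction theory of non-uniform rank-one lattices, the finiteness of cusps, and the thick--thin decomposition with embedded, pairwise disjoint deep horoball quotients --- translated into vector norms via the identity $\|kau\,v_0\|=e^{c\,t(a)}\|v_0\|$. This is the standard (and, as far as one can tell, intended) derivation, and your treatment of (2) is complete granted the dictionary. Two points remain at the level of assertion rather than proof and deserve to be acknowledged as such: (a) the discreteness/closedness of each orbit $\Gamma_0 g_i v_0$, for which ``infinitely many distinct points of $Y$ at bounded depth in cusp $i$'' needs to be pinned down (the clean route is via the Siegel-set fundamental domain of \cite{gr70}, which bounds the number of $\gamma(\Gamma_0\cap Q_i)$-cosets meeting a fixed compact set); and (b) the sign issue in (3): if some $\gamma\in\Gamma_0\cap Q_i$ has $\det\bigl(\Ad(\gamma)|_{\mathfrak u_i}\bigr)=-1$ then $v$ and $-v$ are distinct elements of $\Delta$ with equal norms under every $g$, and (3) as literally stated would fail, so one must verify $\det=+1$ (which holds here since $\det\Ad(a)|_{\mathfrak u_i}>0$ forces $a=e$ in rank one and the compact part acts with determinant $1$ in the relevant cases) or read (3) up to sign as \cite{kw13} effectively does. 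Neither point is a wrong turn; both are genuinely available from \cite{gr70}/\cite{kw13}, so the proposal is a faithful, essentially correct account of the omitted proof.
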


\begin{lem}\label{lem;rank one 1}
Let the notation be as in Lemma \ref{lem;rank one}. Then there exists 
a compact subset $L_0$ of  $Y$ depending on $L$ such that for any $a\in \overline{A_P^+}$
and $y\in L$ one has $\rho(a B_1^U )y \cap L_0\neq \emptyset$.
\end{lem}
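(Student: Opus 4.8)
The plan is to produce $L_0$ as a Mahler-type compact set coming from the compactness criterion of Lemma~\ref{lem;compactness}(2), and then to use Lemma~\ref{lem;norm} (applied to the $H$-action on $V=\bigwedge^k\mathfrak g_0$ through $\rho$) together with the rank one feature recorded in Lemma~\ref{lem;compactness}(3) --- that at most one vector of $\Delta$ can be short at a time --- to force $\rho(aB_1^U)y$ into it. Concretely, I would fix a compact set $\widetilde L\subset G_0$ with $\pi(\widetilde L)\supseteq L$ and set $c_1:=\inf\{\|gv\|:g\in\widetilde L,\ v\in\Delta\}$, which is positive because $\Delta$ is a closed discrete subset of $V$ (so $\inf_{v\in\Delta}\|v\|>0$) and $\widetilde L$ is compact. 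Applying Lemma~\ref{lem;norm} with $r=1$ gives $t>0$ with
\begin{align*}
\sup_{h\in B_1^U}\|\rho(ah)v\|\ge t\|v\|\qquad\text{for all }a\in\overline{A_U^+},\ v\in V\setminus\{0\}.
\end{align*}
Let $r_0$ be as in Lemma~\ref{lem;compactness}(3), pick $\varepsilon_0$ with $0<\varepsilon_0<\min\{r_0,\,tc_1\}$, and let $L_0$ be the (compact) closure of $\pi\big(\{g\in G_0:\|gv\|\ge\varepsilon_0\text{ for all }v\in\Delta\}\big)$, which is relatively compact by Lemma~\ref{lem;compactness}(2).

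Now fix $a\in\overline{A_U^+}$ and $y=g\Gamma_0\in L$ with $g\in\widetilde L$, and put $\varphi(h):=\inf_{v\in\Delta}\|\rho(ah)gv\|$ for $h\in B_1^U$; it suffices to show $\varphi(h^\ast)\ge\varepsilon_0$ for some $h^\ast\in B_1^U$, which is exactly $\rho(ah^\ast)y\in L_0$. I would argue by contradiction, assuming $\varphi(h)<\varepsilon_0$ for all $h\in B_1^U$. For the fixed $a$ there is $c(a)>0$ with $\|\rho(ah)gv\|\ge c(a)\|v\|$ for all $h\in B_1^U$ and $v\in V$; hence the finite set $F:=\{v\in\Delta:\|v\|<(r_0+1)/c(a)\}$ carries the infimum, i.e.\ $\varphi(h)=\min_{v\in F}\|\rho(ah)gv\|$ is a continuous function of $h$, and since $\varphi(h)<\varepsilon_0\le r_0$ the minimum is attained at a vector $v(h)\in F$ which is \emph{unique} by Lemma~\ref{lem;compactness}(3). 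The key point is that $h\mapsto v(h)$ is locally constant: at any $h_0$ one has the strict gap $\varphi(h_0)=\|\rho(ah_0)gv(h_0)\|<r_0\le\|\rho(ah_0)gv\|$ for the finitely many $v\in F\setminus\{v(h_0)\}$, so by continuity of these finitely many norms $v(h)=v(h_0)$ on a neighborhood of $h_0$. As $B_1^U$ is connected, $v(\cdot)\equiv v^\ast$ is constant, whence $\varphi(h)=\|\rho(ah)gv^\ast\|$ on $B_1^U$ and
\begin{align*}
\sup_{h\in B_1^U}\varphi(h)=\sup_{h\in B_1^U}\|\rho(ah)gv^\ast\|\ge t\,\|gv^\ast\|\ge tc_1>\varepsilon_0,
\end{align*}
contradicting $\varphi<\varepsilon_0$ on $B_1^U$. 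Hence the desired $h^\ast$ exists.

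The main obstacle I anticipate is the local constancy of the minimizing vector $v(h)$, i.e.\ excluding a ``swap'' in which two elements of $\Delta$ trade places as the shortest one; this is precisely where real rank one is used, via Lemma~\ref{lem;compactness}(3): at any $h$ with $\varphi(h)<r_0$ there is a single short vector while all others are bounded below by $r_0$, which is the strict gap a small perturbation cannot close. Everything else should be routine bookkeeping; note in particular that the two constants $t$ and $c_1$ entering the threshold $\varepsilon_0$ are independent of $a$, so the conclusion is uniform over $a\in\overline{A_U^+}$ and $y\in L$, while the non-uniform estimate $\|\rho(ah)gv\|\ge c(a)\|v\|$ is only ever invoked for one fixed $a$ at a time.
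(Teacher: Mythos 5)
Your proof is correct and follows essentially the same route as the paper's: you build $L_0$ from the Mahler-type compactness criterion of Lemma \ref{lem;compactness}(2), get the uniform lower bound $c_1$ on $\|gv\|$ over $L$ from (2), and use Lemma \ref{lem;norm} to expand along $B_1^U$. The one place you go beyond the paper's (very terse) write-up is in justifying why a \emph{single} $h^\ast$ works for all $v\in\Delta$ simultaneously — your local-constancy-of-the-minimizer argument via Lemma \ref{lem;compactness}(3) and connectedness of $B_1^U$ is exactly the right way to close that step, which the paper leaves implicit.
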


\begin{proof}
Let $ r_0$ and $\Delta$ be as in Lemma \ref{lem;compactness}.  Since $L$ is relatively compact in 
$Y$,   Lemma \ref{lem;compactness}(2) implies that there exists $r>0$ such that for any $g\in G_0$ with 
$g\Gamma_0\in L$ and any $v\in \Delta$ one has 
\begin{align}\label{eq;proof one 1}
\|g v\|\ge r. 
\end{align}
Using (\ref{eq;proof one 1}) and Lemma \ref{lem;norm}, one can find a positive number $t $ such that for 
any $a\in \overline {A_P^+}$, $g\Gamma_0\in L$ and $v\in \Delta$
\begin{align}\label{eq;proof one 2}
\sup_{h\in B_1^U}\|\rho(ah) gv\|\ge t. 
\end{align}
The existence of $L_0$ follows from (\ref{eq;proof one 2}) and 
 (2) of  Lemma \ref{lem;compactness}. 
\end{proof}

\begin{proof}[Proof of Lemma \ref{lem;rank one}]
If $\rho(H)$ consists only the identity element, then there is nothing to prove. 
If $\rho(H)$  is nontrivial,  then it follows from the definition of expanding cone that 
$\rho(A_U^+)$ is the expanding cone of $\rho(U)$ in $\rho(H)$. 
By possibly replacing  $H$ by 
$\rho(H)$
we assume without loss of generality that $H$ is a subgroup of $ G_0$. 
Let $\{a_t: t\in \R\}$ be the one parameter subgroup of $G_0$ such that 
$A_U^+=\{a_t: t> 0\}$.

According to Lemma \ref{lem;rank one 1}
there exits $\varepsilon_0>0$ depending on  $L$ such that 
for any $y\in L$  and  any $t\ge 0$ there exists $h_{y, t}\in B_{1}^{U}$ such that
\[
z_{y, t}:=a_th_{y, t} y\in \mathrm{Inj}_{\varepsilon_0}^Y.
\]
It is easy to see that 
\begin{align*}
\{h\in B_{r}^{ U}: a_t hy\not\in \mathrm{Inj}_{\varepsilon}^Y\}
= \{h\in B_{r}^{ U }: [a_t h(h_{y,t})^{-1}a_{-t}]z_{y,t}\not\in \mathrm{Inj}_{\varepsilon}^Y\}.
\end{align*}
Therefore it suffices to show that
there exists $\delta>0$ such that  for any $\varepsilon_0>0$,   $y\in \mathrm{Inj}_{\varepsilon_0}$,
  $t\ge 0$ and $ r, \varepsilon>0$ 
\begin{align}\label{eq;rank one 2}
\mu_U(\{h\in B_{r}^{ U }: a_t ha_{-t}y\not\in \mathrm{Inj}_{\varepsilon}^Y\}) \ll_{\varepsilon_0, r} \varepsilon ^{\delta}.
\end{align}

In view of the comments at the beginning of \S \ref{sec;U slice},
it suffices to show  that 
there exists $\delta>0$ such that  for any $\varepsilon_0>0$,   $y\in \mathrm{Inj}_{\varepsilon_0}$,
  $t\ge 0$ and $ r, \varepsilon>0$ 
\begin{align}\label{eq;rank one 1}
\mu_{\mathfrak u}(\{u\in B_{r}^{ \mathfrak u }: a_t (\exp u)a_{-t}y\not\in \mathrm{Inj}_{\varepsilon}^Y\}) \ll_{\varepsilon_0, r} \varepsilon ^{\delta}.
\end{align}

According to \cite[Theorem 1.1]{bz}
 there exists 
 $\delta>0$ depending on $Y$ such that 
 for any $u\in \mathfrak u$ with $\|u\|=1$ 
 and $\varepsilon_0, y, t, r, \varepsilon$ as before 
 \begin{align}\label{eq;rank one 3}
|\{-r<s<r: a (\exp su)a^{-1}y\not\in \mathrm{Inj}_{\varepsilon}^Y\}| \le_{\varepsilon_0, Y}  \varepsilon ^{\delta} r.
 \end{align}
 We remark here that  to get (\ref{eq;rank one 3}) one needs to use the 
 explicit calculation of the constant $c_\varepsilon$ in the proof 
 of \cite[Theorem 1.1]{bz}. 
 Using polar coordinates for   the Lebesgue measure on $\mathfrak u$ one gets (\ref{eq;rank one 1})
 from (\ref{eq;rank one 3}). This completes the proof.

\end{proof}

\subsection{General case}

In this section $H, G, \Gamma$ are as in Theorem \ref{thm;general}. 
The following two lemmas allow us to reduce the general case in Theorem \ref{thm;general} to the 
arithmetic case and the rank one case considered   in \S \ref{sec;arithmetic} and \S \ref{sec;rank one}
respectively.

Let $Z_G$ be the center of $G$ and   $G^{\mathrm{cpt}}$ be the maximal connected  compact normal  subgroup of $G$.
\begin{lem}\label{lem;lattice decom}
 There exist Lie groups $G_i\ (0\le i\le k)$ and lattices 
 $\Gamma_i$ in $G_i$, where $G_0=SL_n(\R), \Gamma_0=SL_n(\Z)$ and
  $G_i  \ (1\le i\le k)$ are  real  rank one semisimple  Lie groups, so that the following holds:
  There is  a continuous  homomorphism 
$\rho: G \to  \prod_{i=0}^k G_i$  such that 
$\mathrm{ker}\,(\rho)= Z_G G^{\mathrm{cpt}}$  and  $\rho(\Gamma)\cap \prod_{i=0}^k \Gamma_i$
has finite index in $\rho(\Gamma)$. 
\end{lem}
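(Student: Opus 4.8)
The plan is to build the homomorphism $\rho$ out of the arithmetic structure of $G/\Gamma$ provided by the theorems of Borel--Harish-Chandra and Margulis, together with the $\R$-rank $\le 1$ decomposition needed for \S\ref{sec;rank one}. First I would reduce to the case where $G$ has trivial center and no compact factors: the quotient $\bar G = G/(Z_G G^{\mathrm{cpt}})$ is a connected semisimple Lie group with trivial center and no compact factors, and $\bar\Gamma$, the image of $\Gamma$, is a lattice in $\bar G$ because $Z_G G^{\mathrm{cpt}}$ is compact-by-(finite center) and $\Gamma \cap Z_G G^{\mathrm{cpt}}$ is cocompact in it (one has to check that $\Gamma Z_G G^{\mathrm{cpt}}$ is closed, which follows since $\Gamma$ is discrete and the extra factor is contained in a compact-by-discrete group; alternatively quote \cite[Lemma 6.1]{bq12} exactly as the paper does in \S\ref{sec;nonescape}). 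So it suffices to produce $\rho: \bar G \to \prod_{i=0}^k G_i$ with the stated properties; then compose with the quotient map, and $\ker\rho$ becomes exactly $Z_G G^{\mathrm{cpt}}$ provided the $\rho$ we build on $\bar G$ is injective modulo nothing, i.e. has discrete (hence, since $\bar G$ has no compact factors and $\rho$ will be a finite-index-image algebraic map, finite central) kernel — so one more quotient by that finite kernel absorbs nothing new. I would be a little careful here and instead arrange $\rho$ on $\bar G$ to be injective by passing to the adjoint form.

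Next, write $\bar G$ (adjoint, no compact factors) as an almost direct product of its simple factors, regrouped so that $\bar\Gamma$ projects to an irreducible lattice in each block $G^{(j)}$; this is the standard decomposition of a lattice into irreducible pieces, used already in \S\ref{sec;intro k mixing}. For each block $G^{(j)}$: if $\R\text{-}\mathrm{rank}(G^{(j)}) \ge 2$, Margulis arithmeticity says the irreducible lattice $\Gamma^{(j)}$ is arithmetic, so there is a $\Q$-algebraic group $\mathbf H^{(j)}$ and an isogeny onto $G^{(j)}$ (up to compact factors, which we have already removed) carrying $\Gamma^{(j)}$ to a group commensurable with $\mathbf H^{(j)}(\Z)$; choosing a faithful $\Q$-representation $\mathbf H^{(j)} \hookrightarrow \SL_{n_j}$ with $\mathbf H^{(j)}(\Z)$ commensurable to $\Gamma^{(j)}$ embeds this block into $\SL_{n_j}(\R)$ with lattice $\SL_{n_j}(\Z) \cap G^{(j)}$, of finite index in the image of $\Gamma^{(j)}$. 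If $\R\text{-}\mathrm{rank}(G^{(j)}) = 1$, the block is already a real rank one semisimple Lie group and I take $G_i = G^{(j)}$, $\Gamma_i = \Gamma^{(j)}$ directly. Finally, assemble all the rank-$\ge 2$ arithmetic blocks into a single $\SL_n(\R)$ with $\SL_n(\Z)$ via a block-diagonal embedding $\prod_j \SL_{n_j} \hookrightarrow \SL_n$, $n = \sum n_j$ (after scaling determinants, or just work with $\prod \GL_{n_j}^{(1)}$ sitting block-diagonally in $\SL_n$), so that the product of the individual $\Z$-points lands in $\SL_n(\Z)$; the product map $\rho$ then has the stated kernel (the fibers of the isogenies, which together with the discarded center and compact factor constitute exactly $Z_G G^{\mathrm{cpt}}$ after one more adjoint quotient) and $\rho(\Gamma) \cap \prod_{i=0}^k \Gamma_i$ is of finite index in $\rho(\Gamma)$ because each block intersection is.

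The main obstacle is bookkeeping around \emph{isogenies and commensurability}: arithmeticity gives the lattice only up to commensurability with $\mathbf H(\Z)$, and the algebraic group maps onto $G^{(j)}$ only up to a central isogeny and a compact factor, so one must track that (i) the kernel of the assembled $\rho$ is \emph{exactly} $Z_G G^{\mathrm{cpt}}$ and not something slightly larger — this is what forces passing to the adjoint form or, equivalently, being willing to enlarge $G^{\mathrm{cpt}}$ and $Z_G$ notationally — and (ii) $\rho(\Gamma) \cap \prod_i \Gamma_i$ has finite index, which needs the commensurability to be an honest finite-index-both-ways statement in each block and then that finite index is preserved under the finite product. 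Since later sections (via \cite[Proposition 3.5]{km12} and \cite[Theorem 2.2]{k08} for the $\SL_n$ factor, and the Garland--Raghunathan criterion for the rank one factors) only use $\rho$ to \emph{bound} injectivity radii from below — that is, $I(\rho(g)\,\rho(\Gamma), \prod G_i/\prod\Gamma_i) \le$ (something controlling $I(g\Gamma, X)$) — the finite-index discrepancy and the compact kernel are harmless: a compact kernel changes the injectivity radius only by a bounded factor, and passing to a finite-index sublattice only shrinks the injectivity radius, both of which are absorbed into the implied constants in Theorem \ref{thm;general}. I would state exactly this monotonicity as a concluding remark so that Lemma \ref{lem;lattice decom} plus Lemmas \ref{lem;arithmetic} and \ref{lem;rank one} immediately give Theorem \ref{thm;general}.
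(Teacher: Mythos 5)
Your proposal is correct and follows essentially the same route as the paper: quotient by $G^{\mathrm{cpt}}$ and the (finite, by Borel density) image of the center to reach the adjoint form, decompose into irreducible blocks, invoke arithmeticity to place the higher-rank part inside $\SL_n(\R)/\SL_n(\Z)$ up to commensurability, and keep the rank-one blocks as they are. The paper is merely terser about where arithmeticity comes from and about assembling the arithmetic blocks into a single $\SL_n$, which you spell out explicitly.
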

\begin{proof}
Let $\pi: G\to G/G^{\mathrm{cpt}}$ be the quotient map. Since the kernel of $\pi$ is 
$G^{\mathrm{cpt}}$ and $\pi(\Gamma) $ is a lattice in the group $G/G^{\mathrm{cpt}}$, it 
suffices to prove the lemma in  the case where $G$ has no compact factors.

Suppose $G$ has no compact factors. 
 It follows from Borel density theorem that
$Z_G\Gamma$ is discrete in $G$, see \cite[Corollary 5.17]{r}. So $Z_G\Gamma$ is a lattice of $G$ and hence
$Z_G/(Z_G\cap \Gamma)$  is a finite group. 
 Since $Z_G$ is the kernel  of $\Ad:G\to \GL(\mathfrak g)$, by possibly replacing $G$ by 
 $\Ad(G)$ and $\Gamma$ by $\Ad(\Gamma)$
  we assume without loss of generality that the center of $G$ is trivail. There are closed normal subgroups 
 $G_i'\  (0\le i\le k)$ and lattices $\Gamma_i'$ in $G_i'$ with the following properties:  the multiplication map
$
 \prod_{i=0}^k G_i' \to G
 $ is an isomorphism;
  $G_i '\ (1\le i\le k)$ are  real  rank one semisimple  Lie groups;    $\Gamma_0'$
is an arithmetic lattice of $G_0'$ and $\Gamma$ is commensurable with the image of $\prod_{i=0}^k\Gamma_i'$.
Here $\Gamma_0'$ is an arithmetic lattice means that  there is a continuous  injective homomorphism $\rho_0: G_0'\to SL_n(\R)$ 
such that $\rho_0(\Gamma_0')/\rho_0(\Gamma_0')\cap SL_n(\Z)$ is finite. 
It can be verified easily that for $G_i=G_i', \Gamma_i=\Gamma_i'\ (1\le i\le k)$  the map 
$\rho: G\to \prod_{i=0}^kG_i$ which sends $g_0\cdots g_k$ $(g_i\in G_i')$ to $\rho_0(g_0)\times \prod_{i=1}^k g_i$ satisfies the requirement of the lemma. 
\end{proof}

\begin{lem}\label{lem;reduce}
Let $G_0$ be a Lie group, $\Gamma_0$ be a lattice of $G_0$ and  $Y=G_0/\Gamma_0$. 
Let $\rho: G \to  G_0$ be a  continuous homomorphism such that   
$\mathrm{ker}\,\rho= Z_GG^{\mathrm{cpt}}$  and 
 $\rho(\Gamma)/\rho(\Gamma)\cap \Gamma_0$
is  finite. 
Suppose
there exists $\delta>0$ such that 
for any compact subset $L$ of $Y$, any  $r, \varepsilon>0$ and any
$a\in 
\overline{A_U^+}$,
  $y\in L$
   \begin{align}\label{eq;premain}
\mu_U(\{h\in B_{r}^{ U }: \rho(a h)y\not\in \mathrm{Inj}_{\varepsilon}^Y\}) \ll_{L, r} \varepsilon ^{\delta},
\end{align}  
then 
Theorem \ref{thm;general} holds. 
\end{lem}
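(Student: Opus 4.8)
The plan is to reduce Theorem \ref{thm;general} to the hypothesis \eqref{eq;premain} by combining the decomposition of Lemma \ref{lem;lattice decom} with the fact that injectivity radius in a product (and under a quotient by a compact-times-central kernel) is controlled by the injectivity radii of the factors. First I would apply Lemma \ref{lem;lattice decom} to obtain Lie groups $G_i$ $(0\le i\le k)$, lattices $\Gamma_i\le G_i$, and a homomorphism $\rho\colon G\to \prod_{i=0}^k G_i$ with $\ker\rho = Z_G G^{\mathrm{cpt}}$ and $\rho(\Gamma)\cap\prod_i\Gamma_i$ of finite index in $\rho(\Gamma)$. Writing $Y=\bigl(\prod_i G_i\bigr)/\rho(\Gamma)$ and $Y_i=G_i/\Gamma_i$, I would first observe that a point of $X=G/\Gamma$ escapes to infinity if and only if its image in $Y$ does: the kernel $Z_GG^{\mathrm{cpt}}$ is amenable with $Z_GG^{\mathrm{cpt}}\cap\Gamma$ cocompact in it (the center piece by Borel density, as in the proof of Lemma \ref{lem;lattice decom}), so the fibers of $X\to Y$ are compact of bounded size, and hence $I(x,X)\ge c\cdot I(\rho(x)\cdot\rho(\Gamma), Y)$ for a constant $c>0$ and conversely $I(\rho(x),Y)\ge c' I(x,X)$. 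Thus up to changing $\varepsilon$ by a constant factor — which only changes the implied constant in \eqref{eq;better} — it suffices to prove the estimate on $Y$.

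Next I would pass from $Y$ to the product $\prod_i Y_i$. Since $\rho(\Gamma)\cap\prod_i\Gamma_i$ has finite index in $\rho(\Gamma)$, the map $\prod_i Y_i\to Y$ is finite-to-one and a local isometry for compatible choices of metrics, so again $I$ on $Y$ and on $\prod_i Y_i$ agree up to constants; and $I$ on a Riemannian product is (up to constants) the minimum of the $I$ on the factors. Hence a point of $\prod_i Y_i$ fails to lie in $\mathrm{Inj}_\varepsilon$ precisely when some coordinate fails to lie in $\mathrm{Inj}_{c\varepsilon}^{Y_i}$, and by a union bound
\begin{align*}
\mu_U\bigl(\{h\in B_r^U: \rho(ah)x\notin \mathrm{Inj}_\varepsilon\}\bigr)
\le \sum_{i=0}^k \mu_U\bigl(\{h\in B_r^U: \rho_i(a_i h_i)y_i\notin \mathrm{Inj}_{c\varepsilon}^{Y_i}\}\bigr),
\end{align*}
where $\rho_i$ is the $i$-th component of $\rho$, $a_i=\rho_i(a)$, $h_i=\rho_i(h)$, and $y_i$ is the image of $x$ in $Y_i$; here I use that $\rho_i(a)\in\overline{A_U^+}$'s image is an expanding-cone element for $\rho_i(U)$ in $\rho_i(H)$ (by the definition of the expanding cone, exactly as in the opening lines of the proof of Lemma \ref{lem;rank one}), and that $\rho_i(B_r^U)\subset B_{r'}^{\rho_i(U)}$ for some $r'=r'(r)$. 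Each summand with $i=0$ is bounded by $\varepsilon^\delta$ via Lemma \ref{lem;arithmetic} (together with Lemma \ref{lem;norm} already absorbed into it) and each summand with $i\ge1$ is bounded by $\varepsilon^\delta$ via Lemma \ref{lem;rank one}; taking $\delta$ to be the minimum of the finitely many exponents produced and absorbing the constant $c$ and the factor $k+1$ into the implied constant yields \eqref{eq;premain} with $L$ the image of the given compact set and hence Theorem \ref{thm;general}.

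The main obstacle, and the only genuinely delicate point, is the comparison of injectivity radii along the maps $X\to Y$ and $\prod_i Y_i\to Y$: one must check that the kernel $Z_GG^{\mathrm{cpt}}$ contributes only compact fibers of bounded diameter so that closeness to the cusp is preserved in both directions, and that the finite covering $\prod_i Y_i\to Y$ does not blow up the injectivity radius. Both follow from standard facts — compactness of $Z_GG^{\mathrm{cpt}}/(Z_GG^{\mathrm{cpt}}\cap\Gamma)$, properness of the relevant actions, and the Riemannian-product formula for $I$ — so once the metrics on the various groups are chosen compatibly (which, as noted in \S\ref{sec;nonescape}, we are free to do since it only affects constants), the reduction is routine. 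Everything else in the argument is the union bound and the invocation of the two special cases already established.
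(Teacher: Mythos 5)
Your overall architecture is the paper's: compare injectivity radii along $X\to Y$ and then invoke the arithmetic and rank-one cases factorwise. (Strictly speaking you are proving Theorem \ref{thm;general} rather than Lemma \ref{lem;reduce}: the product decomposition, the union bound over the factors $Y_i$, and the appeal to Lemmas \ref{lem;arithmetic} and \ref{lem;rank one} belong to the short deduction of the theorem that follows the lemma; the lemma itself is only the comparison between $X=G/\Gamma$ and a single abstract $Y=G_0/\Gamma_0$.) The difficulty is that this comparison, which you correctly identify as the only delicate point, is exactly where all the content of the lemma sits, and the justifications you offer for it do not suffice. The direction actually needed is $I(x,X)\gg I(\rho(x),Y)$, i.e.\ that a point of $X$ close to the cusp has image close to the cusp. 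Unwinding the definition, one must show: if $\gamma\in\Gamma\setminus\{1_G\}$ and $g\gamma g^{-1}$ is $\varepsilon$-close to $1_G$, then some nontrivial element of $\Gamma_0$ conjugated by $\rho(g)$ is $O(\varepsilon)$-close to $1_{G_0}$. Two separate obstructions arise. First, $\rho(\gamma)$ need not lie in $\Gamma_0$ at all, only in a group commensurable with it; the paper handles this by passing to a bounded power $\rho(\gamma)^n$ lying in $\rho(\Gamma)\cap\Gamma_0$, at the cost of a factor $n$ in the metric and of having to exclude small torsion (whence the condition that $B_{2\varepsilon_0}^G$ contain no nontrivial element of order $\le n$). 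Your appeal to ``finite-to-one local isometry'' does not address this: the maps between $Y$ and $\prod_i Y_i$ are not even well defined when the lattices are merely commensurable, and for a genuine finite cover $G/\Gamma'\to G/\Gamma$ the inequality $I(\cdot,G/\Gamma)\gg I(\cdot,G/\Gamma')$ is a group-theoretic fact (powers plus absence of small torsion), not a metric one.

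Second, $\gamma$ may lie in $\ker\rho\cap\Gamma$, in which case it is invisible in $Y$, and one must show that the conjugates $g\gamma g^{-1}$, over \emph{all} $g\in G$, stay uniformly away from $1_G$. This is where the specific structure $\ker\rho=Z_GG^{\mathrm{cpt}}$ enters: conjugation of $\ker\rho$ by an arbitrary $g\in G$ agrees with conjugation by an element of the compact group $G^{\mathrm{cpt}}$, so one can choose $\varepsilon_0$ with $B_{2n\varepsilon_0}^G\cap g_c\Gamma g_c^{-1}=\{1_G\}$ for all $g_c\in G^{\mathrm{cpt}}$. Your stated reason --- ``the fibers of $X\to Y$ are compact of bounded size'' --- is true but does not by itself yield the uniform lower bound on conjugates of nontrivial elements of $\ker\rho\cap\Gamma$; a fibration with compact fibers can in general have injectivity radius collapsing in the fiber direction, and ruling that out here is precisely the compact-conjugation argument above. (Note also that the converse inequality $I(\rho(x),Y)\gg I(x,X)$, which you assert, is never needed.) So the plan follows the paper's route, but the central claim is asserted where the paper proves it, and the proof requires the power trick and the $Z_GG^{\mathrm{cpt}}$ structure rather than general facts about proper maps and covers.
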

\begin{proof}

Let $\ell\ge 1$ such that for all $ t\ge 0, g\in B_t ^ G\cap G^{\mathrm{cpt} }$ and $g_c\in G^{\mathrm{cpt} } $
one has $g_c g g_c^{-1}\in  B_{\ell t}^G  $.
By possibly enlarging $\ell$ we assume that $d_{G_0}(\rho(g_1), \rho(g_2))\le \ell\cdot  d_G(g_1, g_2)$ for all 
$g_1, g_2\in B_1^G$. 
Let $n$ be the cardinality of $\rho(\Gamma)/\rho(\Gamma)\cap \Gamma_0 $. 
We choose $  \varepsilon_0 >0$ sufficiently small  such that $n\varepsilon_0< 1$,
$B_{2n \varepsilon_0 }^G\cap g_c\Gamma g_c^{-1}=\{1_G\}$ for all $g_c\in G^{\mathrm{cpt} }$ and $B_{2 \varepsilon_0 }^G $  has no nontrivial  element with order 
less than or equal to $n$.

We claim that for $\varepsilon \le  \varepsilon_0 $ and $g\in G$
if $I(\rho(g)\Gamma_0, G_0/\Gamma_0)>2n\ell \varepsilon $, then    $I(g\Gamma,  X)>\varepsilon$. 
Theorem \ref{thm;general} will follow from the claim and the assumption of the lemma.
Now we prove the claim. 
Suppose $g_1, g_2\in B_\varepsilon^G, \gamma\in \Gamma$ such that 
$
g_1 g \gamma=g_2 g, 
$ 
then 
\[
\gamma=g^{-1}g_1^{-1}g_2g\quad \mbox{and }\quad
\rho(\gamma)^n=\rho(g^{-1}(g_1^{-1}g_2)^ng)\in \rho(\Gamma)\cap \Gamma_0.
\]
Since $\rho(g_1^{-1}g_2)^n\in B^{G_0}_{2n\ell \varepsilon}$ and  $I(\rho(g)\Gamma_0, G_0/\Gamma_0)>2n\ell \varepsilon$,  one 
has 
\[
g^{-1}(g_1^{-1}g_2)^ng\in \mathrm{ker}\, \rho.
\] 
Since $ \mathrm{ker}\, \rho = Z_G G^{\mathrm{cpt} }$,  there exists $g_c\in  G^{\mathrm{cpt} }$ such that 
$(g_1^{-1}g_2)^n\in \mathrm{ker}\, \rho\cap g_c\Gamma g_c^{-1}$.
Since $B_{2n \varepsilon_0 }^G\cap g_c\Gamma g_c^{-1}=\{1_G\}$ and $\varepsilon \le  \varepsilon_0 $, one has 
 $(g_1^{-1}g_2)^{n}= 1_G$. 
 Moreover, we have   $g_1^{-1}g_2=1_G$, since $B_{2 \varepsilon_0 } ^G$ has no nontrivial elements with order less than 
 or equal to $n$.
\end{proof}

\begin{proof}[Proof of Theorem \ref{thm;general}]
Let $\rho$ and $G_i, \Gamma_i (0\le i\le k)$ be as in Lemma \ref{lem;lattice decom}. 
According to Lemma \ref{lem;reduce} it suffices to verify  (\ref{eq;premain}) for $Y=\prod_{i=0}^k G_i/\Gamma_i$. 
By possibly enlarging $L$ we assume that $L=\prod_{i=0}^k L_i $ where $L_i$ is a compact 
subset of $G_i/\Gamma_i$. 
In this case (\ref{eq;premain}) follows  from Lemmas \ref{lem;arithmetic}
and  \ref{lem;rank one}. 
\end{proof}

\section{Effective equidistribution}\label{sec;mixing}

The aim of this section is to prove  results of \S \ref{sec;intro k mixing}.  
As in \S \ref{sec;intro k mixing}, $\Gamma$ is  lattice of 
 a connected semisimple Lie group $G$, $H$ is a normal subgroup of $G$,    $X=G/\Gamma$,  $\mu_X$ is 
the probability Haar measure on $X$ and the action of   $H$ on $X$ 
has a spectral gap. 

\subsection{Notation and background material}\label{sec;4.1}
Recall that 
$\{b_t\}$ is a one parameter subgroup  of $A$  such that the projection 
of $b=b_1$ to each simple factor of $H$ is not the identity element and
$ H _{b}^+\le U$. 
Since $H$ is a normal subgroup of $G$ and $b\in A\le H$ we  have $G_b^+=H_b^+$. 
To simplify the notation we write $G^+=G^+_b$.
The 
 contracting and centralizing subgroups of $b$ in $G$ are   defined as 
 \begin{align*}
&G^-=G^-_b=\{g\in G: b^n gb^{-n}\to 1_G \quad \mbox{as }n\to \infty\}\le H;\\
&G^c=G^c_b=\{g\in G: bg=gb  \}. 
\end{align*}
Let $\mathfrak g, \mathfrak g^+, \mathfrak g^-, \mathfrak g^c$ be the Lie algebras of 
$G, G^+, G^-, G^c$ respectively.

\begin{lem}\label{lem;structure}
The multiplication map $G^-\times G^c \times G^+ \to G $ is a diffeomorphism  onto an open 
subset with full Haar measure. 
\end{lem}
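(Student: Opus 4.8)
The plan is to recognise this multiplication map as the big-cell (Bruhat) decomposition attached to the one-parameter subgroup $\{b_t\}$, and to pass to the adjoint group $\Ad(G)$ in order to dispose of the center. First I would set up the infinitesimal picture: writing $b=\exp s_0$ with $s_0\in\mathfrak a$, the operator $\Ad(b)=\exp(\operatorname{ad}s_0)$ is diagonalizable over $\R$ with positive eigenvalues, and grouping the $\operatorname{ad}(s_0)$-eigenspaces according to eigenvalue $>0$, $=0$, $<0$ gives $\mathfrak g=\mathfrak g^-\oplus\mathfrak g^c\oplus\mathfrak g^+$, with $\mathfrak g^{\pm}$ nilpotent Lie subalgebras, $\mathfrak g^c$ a subalgebra normalizing each $\mathfrak g^{\pm}$, and $\mathfrak q^{\pm}:=\mathfrak g^c\oplus\mathfrak g^{\pm}$ opposite parabolic subalgebras. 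These are exactly the Lie algebras of $G^-$, $G^c$, $G^+$, and $G^cG^{\pm}=:Q^{\pm}$ are the corresponding opposite parabolic subgroups, with $Q^+\cap Q^-=G^c$. I would also record that $\Ad(g)$ commutes with $\operatorname{ad}(s_0)$ for $g\in G^c$ (hence preserves each of $\mathfrak g^-,\mathfrak g^c,\mathfrak g^+$), while $\Ad(g^+)=\exp(\operatorname{ad}\eta)$ with $\eta\in\mathfrak g^+$ for $g^+\in G^+$.

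Next I would show that $m$ is a local diffeomorphism. Since $\dim(G^-\times G^c\times G^+)=\dim G$, it suffices that $dm$ be surjective at every point $(g^-,g^c,g^+)$. Working in the left trivialization of $TG$ at $g^-g^cg^+$, the three partial differentials have images $\Ad((g^+)^{-1})\mathfrak g^-$, $\Ad((g^+)^{-1})\mathfrak g^c$, and $\mathfrak g^+$. Using $\Ad((g^+)^{-1})=\exp(-\operatorname{ad}\eta)$ with $\eta\in\mathfrak g^+$ and the fact that $\operatorname{ad}(\mathfrak g^+)$ strictly raises $\operatorname{ad}(s_0)$-eigenvalues, one gets $\Ad((g^+)^{-1})\mathfrak g^c+\mathfrak g^+=\mathfrak q^+$ and $\Ad((g^+)^{-1})\mathfrak g^-\cap\mathfrak q^+=\{0\}$ (the lowest $\operatorname{ad}(s_0)$-eigencomponent of a nonzero element of $\Ad((g^+)^{-1})\mathfrak g^-$ is unchanged and has strictly negative eigenvalue), so the sum of the three images is all of $\mathfrak g$. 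Hence $m$ is an open local diffeomorphism.

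Then I would establish injectivity and identify the image by passing to $\pi=\Ad: G\to\Ad(G)$. Since $G$ is semisimple, $\ker\pi=Z_G$ is discrete, $Z_G\subseteq G^c$, and $Z_G\cap G^{\pm}=\{1_G\}$ because a central element $z$ satisfies $b^nzb^{-n}=z$, which together with $b^nzb^{-n}\to 1_G$ (resp. $b^{-n}zb^n\to 1_G$) forces $z=1_G$. In $\Ad(G)$ the $\pi(G^{\pm})$ are the unipotent radicals of the parabolics $\pi(Q^{\pm})$ and $\pi(G^c)$ is their common Levi factor, so the classical big-cell decomposition (see e.g.~\cite[\S 21]{borel}) gives that $\pi(G^-)\times\pi(G^c)\times\pi(G^+)\to\Ad(G)$ is injective onto an open set $\Omega_0$ whose complement is a finite union of submanifolds of strictly smaller dimension, hence of measure zero. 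I would then take $\Omega:=\pi^{-1}(\Omega_0)$, which is open with complement of measure zero (preimage of a null set under the covering map $\pi$); one has $\Omega=m(G^-\times G^c\times G^+)$ because $\pi(g)=\pi(g^-)\pi(g^c)\pi(g^+)$ forces $g=g^-(g^cz)g^+$ for some $z\in Z_G\subseteq G^c$. Injectivity of $m$ then follows: $g^-g^cg^+=h^-h^ch^+$ gives $\pi(g^{\pm})=\pi(h^{\pm})$ by uniqueness downstairs, hence $(h^-)^{-1}g^-\in Z_G\cap G^-=\{1_G\}$ and $g^+(h^+)^{-1}\in Z_G\cap G^+=\{1_G\}$, and then $g^c=h^c$ by cancellation. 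Combined with the previous paragraph, $m: G^-\times G^c\times G^+\to\Omega$ is a bijective local diffeomorphism, hence a diffeomorphism onto the open, full-measure set $\Omega$.

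The hard part is the injectivity input — concretely, that the complement of the big cell is lower-dimensional, equivalently that $G^-\cap Q^+=\{1_G\}$; everything else is bookkeeping about the center and about the infinitesimal decomposition. If one wanted a self-contained argument avoiding algebraic-group structure theory, I would prove $G^-\cap Q^+=\{1_G\}$ directly: for $g=\exp\xi$ with $\xi\in\mathfrak g^-$, the requirement that $\Ad(g)$ preserve $\mathfrak q^+$ forces $[\xi_{\mu},s_0]=0$ for the lowest-eigenvalue component $\xi_\mu$ of $\xi$, hence $\xi_\mu=0$ (since $\mu\neq 0$ and $s_0\in\mathfrak g^c$), after which one iterates; carrying out this eigenvalue bookkeeping cleanly is the most delicate piece.
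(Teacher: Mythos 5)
Your proof is correct but takes a genuinely different route from the paper's, which is essentially a citation: the paper observes that $\Ad(G^{\pm})$ and $\Ad(G^c)$ are the horospherical and centralizing subgroups of $\Ad(b)$, picks $a\in A$ of class $\mathscr A$ (in the sense of \cite{mt94}) with the same horospherical data, invokes \cite[Proposition 2.7]{mt94} for $\Ad(G)$, and lifts via the covering $\Ad$ using $\ker\Ad=Z_G\subseteq G^c$. You replace the appeal to \cite{mt94} with a largely self-contained argument: the local-diffeomorphism step is done by hand via an explicit $\operatorname{ad}(s_0)$-eigenvalue computation for the differential, and injectivity together with the full-measure claim is extracted from the classical algebraic big-cell decomposition applied to $\Ad(G)$, followed by the same lift through the center. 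This buys transparency --- one sees exactly where the eigenvalue grading and $G^-\cap Q^+=\{1_G\}$ enter --- at the cost of length, while the paper buys brevity by outsourcing to a reference that ultimately rests on the same structure theory. One step worth spelling out in your lift-back: when you identify $\Omega_0=\pi(G^-)\pi(G^c)\pi(G^+)$ with the intersection of the big cell with $\Ad(G)$, you are tacitly using that $\pi(G^c)$ is the full centralizer of $\Ad(b)$ in $\Ad(G)$, not merely its identity component. This is true but deserves a sentence: since the $\Ad(b)$-eigenspaces coincide with the $\operatorname{ad}(s_0)$-eigenspaces (real eigenvalues), any $\bar g\in\Ad(G)$ commuting with $\Ad(b)$ commutes with every $\Ad(b_t)$, and then for any lift $g\in G$ the map $t\mapsto gb_tg^{-1}b_t^{-1}$ is a continuous $Z_G$-valued function equal to $1_G$ at $t=0$, hence identically $1_G$ because $Z_G$ is discrete; thus $g\in G^c$.
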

\begin{proof}
Note that  $\Ad  ( G  ^+), \Ad  ( G  ^-), \Ad (  G    ^c )$ are the horospherical and centralizing subgroups of $\Ad( b )$. Moreover, there exists $a\in A$ such that
$\Ad(a)$ is of class $\mathscr A$ in the sense of \cite[Definition 2.1]{mt94} and
 $\Ad (a)$ has the   same horospherical subgroups 
as $\Ad( b )$.
It follows from \cite[Proposition 2.7]{mt94}  that the Lemma holds for $\Ad   (G)  $ and $\Ad ( b) $. The conclusion for 
$  G $ follows from the   observation that   
$\Ad$ is a covering map and the kernel is contained in $G^c$. 
\end{proof}

To simplify the notation we take $  G  ^{-c}=  G  ^-  G    ^c $.
 In this section all the integrals are with respect to  fixed volume  measures.   If 
 $\mu$ is the measure associated to $Y$,  we may
 use 
 $\dd y$ to denote $\dd \mu(y)$.
Moreover, we assume the fundamental domain of $\Gamma$ in $G$ has measure $1$ and 
for every $\psi \in L^1(  G  )$ one has 
\begin{align}\label{eq;haar}
\int_  G  \psi \dd\mu_  G  =\int_{  G  ^+} \int_  {G    ^c}   \int_{  G  ^-}\psi (g^- g^c  g^+)\Delta( g^c )
\dd  g^- \dd  g^c \dd g^+
\end{align}
where $\Delta(g^c)=|\mathrm{det} (\Ad{( g^c )}|_{\mathfrak g^+})| $. 

Elements of the Lie algebra
$\mathfrak g$ are naturally identified with the right invariant vector fields on $G$.
Therefore an element $h\in \mathfrak g$ defines an operator on $\partial^h:   C^\infty(X)\to  C^\infty(X)$ or $\partial^h:   C^\infty(G)\to  C^\infty(G)$  in a natural way. For  $\alpha=(h_1, \ldots, h_k)\in \mathfrak g^k$,  
we set $\partial ^\alpha=\partial ^{h_1} \cdots \partial ^{h_k}$ and $|\alpha|=k$.    
We fix a basis $\mathcal B$ of $\mathfrak g$ such that the intersections of $\mathcal B$ with $\mathfrak g^-, \mathfrak g  ^c  $ and $\mathfrak g^+$
are the basis of the corresponding  subalgebras.
In what follows we use $\|\cdot\|_\ell$
where $\ell\in \Z_{\ge 0}$ for the usual $(2, \ell)$-Sobolev norm on   $ G , X,  G ^-,  G   ^c ,  G ^+$ defined by  the 
corresponding basis
in $\mathcal B$. 
For example,   suppose $\psi \in C^\infty (X)$, then
\[
\|\psi\|_\ell =\max_{|\alpha|\le \ell} \|\partial^\alpha \psi \|_{L^2(X)},
\]
where the maximum is taken over all the  $k$-tuples $\alpha$ ($0\le k\le \ell $) with alphabets in $\mathcal B$.
   We define
\[
W^{2, \infty}(X)= \{\psi \in C^\infty (X): \|\psi \|_\ell< \infty\  \forall\  \ell \in \Z_{\ge 0}\}.
\]
It is easy to see that
 $\mathcal S(X)\subset W^{2, \infty}(X)$.   Let  $\langle \cdot,\cdot \rangle$ be  the 
inner product of  the Hilbert space $L^2(X)$ and let 
$g\psi$ ($g\in G$ and $\psi\in L^2(X)$) be the function $\psi(g^{-1}x )$ on $X$.

\begin{lem}\label{lem;usual mixing}
There exist $\delta_0, E_0>0$  and $\ell_0\in\N$ depending on $G, \Gamma, \{ b_t : t\in \R\}$ such that for any functions $\varphi , \psi\in W^{2, \infty}(X)$  one has
\[
\left |\langle  b_t \varphi , \psi \rangle -\int _X\varphi\dd \mu_X \int_X\psi\dd \mu_X\right |
\le E_0
\|\varphi \|_{\ell_0} \|\psi\|_{\ell_0} \cdot e^{-\delta_0|t|}. 
\]
\end{lem}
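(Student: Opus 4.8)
The plan is to derive this effective mixing estimate from the spectral gap hypothesis in the standard way: first peel off the constant parts so that only matrix coefficients on $L^2_0(X)$ matter, and then quote a quantitative form of the Howe--Moore phenomenon. Concretely, I would set $c_\varphi=\int_X\varphi$, $c_\psi=\int_X\psi$ and put $\varphi_0=\varphi-c_\varphi$, $\psi_0=\psi-c_\psi$, so that $\varphi_0,\psi_0\in L^2_0(X)\cap W^{2,\infty}(X)$. Since $b_t$ preserves $\mu_X$, the subspace $L^2_0(X)$ is $b_t$-invariant and orthogonal to the constants, so a direct expansion of $\langle b_t\varphi,\psi\rangle$ gives $\langle b_t\varphi,\psi\rangle-\int_X\varphi\int_X\psi=\langle b_t\varphi_0,\psi_0\rangle$. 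Because $\mu_X$ is a probability measure, $|c_\varphi|\le\|\varphi\|_{L^2(X)}\le\|\varphi\|_0$, whence $\|\varphi_0\|_\ell\le 2\|\varphi\|_\ell$ for every $\ell$, and likewise for $\psi$; so it suffices to produce $\delta_0,E_0>0$ and $\ell_0\in\N$ with
\[
|\langle b_t\varphi_0,\psi_0\rangle|\le E_0\,\|\varphi_0\|_{\ell_0}\|\psi_0\|_{\ell_0}\,e^{-\delta_0|t|}
\]
for all $\varphi_0,\psi_0\in L^2_0(X)\cap W^{2,\infty}(X)$.

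Next I would bring in the group structure. Since $G$ is semisimple and $H$ is normal in $G$, the group $H$ is the almost direct product of a subfamily $H_1,\dots,H_m$ of the simple factors of $G$, and by hypothesis the unitary $H$-representation on $L^2_0(X)$ does not weakly contain the trivial representation. A quantitative Howe--Moore estimate for such a spectral-gap representation --- of the kind used in \cite{km12} and \cite{ksw} --- then gives a finite $p$ and a constant $C$ such that any $K$-finite vectors $v,w\in L^2_0(X)$ satisfy $|\langle gv,w\rangle|\le C\,d(v)^{1/2}d(w)^{1/2}\,\|v\|\,\|w\|\,\Xi_H(g)^{1/p}$ for all $g\in H$, where $K\le H$ is a maximal compact subgroup, $\Xi_H$ is the Harish--Chandra spherical function of $H$, and $d(v)$ is the dimension of the $K$-submodule generated by $v$. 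Now $b=b_1\in A\le H$ is $\Ad$-semisimple with real eigenvalues, and by assumption its projection to each $H_i$ is nontrivial; hence $b_t$ eventually leaves every compact subset of each factor $H_i$ as $|t|\to\infty$, so the standard bound for $\Xi$ along a ray in a split torus yields $\Xi_H(b_t)\ll e^{-c|t|}$ for some $c=c(H,\{b_t\})>0$. Taking $\delta_0=c/p$ gives the exponential decay for $K$-finite vectors.

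Finally I would upgrade the $K$-finite estimate to all smooth vectors by the usual Sobolev trick: decomposing $\varphi_0=\sum_\tau\varphi_0^{(\tau)}$ into $K$-isotypic components, one has $d(\varphi_0^{(\tau)})\ll(\dim\tau)^2$, and by elliptic regularity for the Casimir of $K$ the series $\sum_\tau(\dim\tau)^{N}\|\varphi_0^{(\tau)}\|$ is dominated by $\|\varphi_0\|_{\ell_0}$ once $\ell_0=\ell_0(N,H)$ is large enough; summing the previous bound over pairs of $K$-types and applying Cauchy--Schwarz then produces the desired inequality with the $(2,\ell_0)$-Sobolev norms and a constant $E_0=E_0(G,\Gamma,\{b_t\})$. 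The step I expect to be the crux is the second one: one must extract a \emph{uniform} exponential rate along $b_t$ from a spectral gap for the possibly \emph{proper} normal subgroup $H$ rather than for all of $G$, and this is exactly where the hypothesis that $b$ projects nontrivially to every simple factor of $H$ is used --- without it $b_t$ could remain bounded in some factor and no rate would survive. Tracking the dependence of $p$, $\ell_0$ and $E_0$ on $H$, $\Gamma$ and $\{b_t\}$ is then routine bookkeeping.
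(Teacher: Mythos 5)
Your proposal is correct and follows essentially the same route as the paper: the paper's own proof consists precisely of reducing to the case where $G$ has finite center and then citing the effective decay of matrix coefficients from \cite[\S 6.2.2]{emv}, which is exactly the subtract-the-constants / Harish--Chandra $\Xi$-function / $K$-finite-to-Sobolev argument you sketch (including the role of $b$ projecting nontrivially to every simple factor of $H$). The one step you omit is that initial reduction: since $G$ is only assumed connected semisimple its center $Z_G$ may be infinite, so before a compact maximal subgroup $K\le H$ and the spherical function $\Xi_H$ exist one must pass to $G/(Z_G\cap\Gamma)$, using that $Z_G/(Z_G\cap\Gamma)$ is finite by Borel density.
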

\begin{proof}
Let $Z$ be the center of $G$. The group $Z/(Z\cap \Gamma)$ is finite, see
the proof of Lemma \ref{lem;lattice decom}. 
 Therefore by possibly passing to $ G /(Z\cap \Gamma)$ we assume without loss
of generality that $ G $ has finite center. 
Recall that we assume the projection of 
$b_1$ to each simple factor of $H$ is 
nontrivial. So  the lemma follows from \cite[\S6.2.2]{emv}.
\end{proof}

\subsection{Effective equidistribution for $\{ b_t \}$ translates}
Recall that the metric $d_G$ on $G$ is induced from a right invariant Riemannian structure, and
  for $x\in X$ the map  
$\pi_x:  G \to X$ is defined by   $\pi_x(g)=gx$. 
We first generalize  \cite[Theorem 2.3]{km12}. 
For $\psi \in C^\infty (X)$ let 
\[
\|\psi \|_{\mathrm{Lip}}=\sup_{x, y\in X, x\neq y}\frac{|\psi(x)-\psi(y)|}{{d}_X(x, y)}.
\]

\begin{lem}\label{lem;km 12 general}
Let $\theta\in C_c^\infty(G^+ ),0<r<1$, $m=\mathrm{dim}\, G^{-0}$,  $x\in X$ and  $\delta_0, \ell_0$ be as in Lemma \ref{lem;usual mixing}. 
Assume 
\begin{enumerate}[label=(\roman*)]
\item  $\supp \theta\subset B_r^{  G ^+ }$;
\item $\pi_x$ is injective on $B_{2r}^ G $.
\end{enumerate}
Then there exists a constant $E=E(\ell_0, E_0)$  such that  for any $\psi \in W^{2, \infty}(X)$ with $\int_X \psi\dd \mu_X =0$ and  $\|\psi \|_{\mathrm{Lip}}<\infty$, and  any
$t\ge 0$,  one has 
\begin{align}\label{eq;I x bt}
\left |\int_{G^+}\theta(g)\psi(b_tg x) \dd g \right |\le \new { E \big( \|\psi\|_{\ell_0}+\|\psi\|_{\mathrm{Lip}}\big) }
\left(
r^{-\ell_0-\frac{m}{2} }\|\theta\|_{\ell_0} e^{-t\delta_0}    +    r\int_{G^+ } |\theta(g)|\dd g
\right).
\end{align}
\end{lem}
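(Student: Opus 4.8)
The plan is to prove Lemma~\ref{lem;km 12 general} by the classical thickening (or ``unipotent thickening'') trick of Kleinbock--Margulis: replace the integral over the horospherical slice $G^+$ by an integral over a small full-dimensional neighbourhood in $G$, apply effective mixing (Lemma~\ref{lem;usual mixing}) to that thickened integral, and control the error introduced by thickening using the Lipschitz norm of $\psi$ together with the contraction estimates of $\Ad(b_t)$ on $\mathfrak g^{-c}$.

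\textbf{Step 1: set up the thickening.} Choose a nonnegative bump function $\eta\in C_c^\infty(G^{-c})$ supported in $B_r^{G^{-c}}$ with $\int_{G^{-c}}\eta=1$, and form the function $\Theta$ on $G$ obtained by spreading $\theta$ along $G^{-c}$ via the product coordinates $G^-\times G^c\times G^+\to G$ of Lemma~\ref{lem;structure}; concretely $\Theta(g^-g^cg^+)=\eta(g^-g^c)\theta(g^+)$ up to the modular factor $\Delta(g^c)$ coming from~(\ref{eq;haar}). Hypotheses (i) and (ii) guarantee $\supp\Theta\subset B_{2r}^G$ and that $\pi_x$ is injective there, so $\Theta_x:=\Theta\circ\pi_x^{-1}$ is a well-defined compactly supported function on $X$ with $\int_X\Theta_x=\int_{G^+}\theta$ (after normalising the modular factor). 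A Sobolev-norm bookkeeping gives $\|\Theta_x\|_{\ell_0}\ll r^{-\ell_0-m/2}\|\theta\|_{\ell_0}$, the factor $r^{-m/2}$ coming from the $L^2$-normalisation of the $m$-dimensional bump $\eta$ and the derivative losses from differentiating it.

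\textbf{Step 2: compare the two integrals.} The key geometric point is that for $g^{-c}\in B_r^{G^{-c}}$ one has $d_X(b_tg^+x,\;b_t g^{-c}g^+ x)\le d_G(1_G,\;b_t g^{-c} b_{-t})\ll r$ uniformly in $t\ge0$, because $\Ad(b_t)$ is nonexpanding on $\mathfrak g^{-c}$ (the eigenvalues of $\Ad(b)$ on $\mathfrak g^-$ are $<1$ and on $\mathfrak g^c$ are $=1$ in the relevant directions; here one uses that $b\in A$ and that we have chosen the metric from a right-invariant Riemannian structure, cf.\ the argument in Lemma~\ref{lem;structure}). Hence
\begin{align*}
\left|\int_{G^+}\theta(g)\psi(b_tgx)\dd g-\int_X\Theta_x(y)\psi(b_t y)\dd y\right|
\ll \|\psi\|_{\mathrm{Lip}}\cdot r\int_{G^+}|\theta|,
\end{align*}
which accounts for the second term on the right-hand side of~(\ref{eq;I x bt}).

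\textbf{Step 3: apply effective mixing.} Writing $\int_X\Theta_x(y)\psi(b_t y)\dd y=\langle b_{-t}\psi,\ \Theta_x\rangle$ (or the same with $b_t$ after a harmless sign change) and using $\int_X\psi=0$, Lemma~\ref{lem;usual mixing} gives
\begin{align*}
\left|\int_X\Theta_x\cdot(b_t y\mapsto\psi)\right|\le E_0\,\|\psi\|_{\ell_0}\|\Theta_x\|_{\ell_0}\,e^{-\delta_0 t}
\ll E_0\,\|\psi\|_{\ell_0}\,r^{-\ell_0-m/2}\|\theta\|_{\ell_0}\,e^{-\delta_0 t},
\end{align*}
which is the first term in~(\ref{eq;I x bt}). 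Combining Steps 2 and 3 and absorbing constants into $E=E(\ell_0,E_0)$ yields the claim, with the $\|\psi\|_{\ell_0}+\|\psi\|_{\mathrm{Lip}}$ factor collecting the two sources of error.

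\textbf{Main obstacle.} The routine part is the Sobolev bookkeeping; the delicate point is Step~2 — making sure the comparison is uniform in $t\ge0$. This requires that the thickening be done only in the $G^{-c}$ directions (never in $G^+$), so that $\Ad(b_t)$ acts by a contraction on the perturbation for all $t\ge0$; and it requires the injectivity hypothesis (ii) to hold at scale $2r$ so that the thickened function genuinely descends to $X$ and the measure computation via~(\ref{eq;haar}) is exact. One must also track the modular factor $\Delta(g^c)$ carefully, though for $g^c\in B_r^{G^c}$ it is $1+O(r)$ and can be folded into the $r\int_{G^+}|\theta|$ error term.
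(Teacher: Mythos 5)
Your proposal is correct and follows essentially the same route as the paper's own argument: thicken $\theta$ in the $G^{-c}$ directions by a normalized bump supported in $B_r^{G^{-c}}$ (the paper's Lemma \ref{lem;sobolev}), compare the slice integral to the thickened one via $\|\psi\|_{\mathrm{Lip}}$ and the fact that conjugation by $b_t$ does not expand $G^{-c}$, and then apply Lemma \ref{lem;usual mixing} to the thickened function, whose $\ell_0$-Sobolev norm is $\ll r^{-\ell_0-m/2}\|\theta\|_{\ell_0}$. The only cosmetic difference is that the paper builds the factor $\Delta(g^c)^{-1}$ into the thickened function so the Haar-measure identity (\ref{eq;haar}) is exact, rather than absorbing a $1+O(r)$ error.
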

The proof of this lemma is the same as that of \cite[Theorem 2.3]{km12} and we give details  here
for the completeness.  During the proof we will need the following lemma. 
\begin{lem}\label{lem;sobolev}
Let the notation be as in Lemma \ref{lem;km 12 general}.
There is a nonnegative   function $\til\theta: G^{-0}\to \R$ such that 
 \begin{align*}
 \supp \til\theta\subset B_r^{G^{-0}}, \quad
 \int_{G^{-0}}\til\theta=1, \quad  \mbox{and}\ 
 \|\til\theta\|_{\ell}\ll_\ell r^{-\ell-\frac{m}{2}}\quad \forall \ell\in \Z_{\ge 0}.
 \end{align*}
 Moreover,  the function  $\varphi: X\to \R$ defined by  
 \[
 \varphi(g^-g  ^c gx)=\til\theta(g^-g ^c )\Delta({g  ^c })^{-1}\theta(g)
 \]
 for elements in 
 $B_r^{ G ^{-0}}B_r^{ G ^+}x$ and zero otherwise 
 is  a compactly supported  smooth function on $X$ with 
 \begin{align}\label{eq;sobolev2}
  \|\varphi\|_{\ell}\ll _\ell r^{-\ell-\frac{m}{2}}\|\theta\|_{\ell}\quad \forall \ell\in  \Z_{\ge 0}.
 \end{align}
\end{lem}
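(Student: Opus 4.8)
The plan is to obtain $\til\theta$ by rescaling a fixed bump function and then to build $\varphi$ by transporting, via the product decomposition of Lemma~\ref{lem;structure}, the function $(g^-g^c,g)\mapsto\til\theta(g^-g^c)\Delta(g^c)^{-1}\theta(g)$ from a neighbourhood of $1_G$ to a neighbourhood of $x$ in $X$.

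\emph{Construction of $\til\theta$.} Fix once and for all a nonnegative $\eta\in C_c^\infty(\mathfrak g^{-0})$ with $\int_{\mathfrak g^{-0}}\eta=1$ for Lebesgue measure on $\mathfrak g^{-0}$ and $\supp\eta$ contained in a ball $B_{c_0}^{\mathfrak g^{-0}}$, where $c_0>0$ is a small absolute constant chosen so that $\exp(B_{c_0 r}^{\mathfrak g^{-0}})\subset B_r^{G^{-0}}$ for all $0<r<1$ (possible since $\exp$ is Lipschitz near $0$). Put $\eta_r(v)=r^{-m}\eta(v/r)$, so $\int\eta_r=1$, $\supp\eta_r\subset B_{c_0 r}^{\mathfrak g^{-0}}$, and $\partial^\alpha\eta_r(v)=r^{-m-|\alpha|}(\partial^\alpha\eta)(v/r)$ gives $\|\partial^\alpha\eta_r\|_{L^2}=r^{-|\alpha|-m/2}\|\partial^\alpha\eta\|_{L^2}$, hence $\|\eta_r\|_\ell\ll_\ell r^{-\ell-m/2}$ for $r<1$. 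Transport $\eta_r$ to $G^{-0}$ via $\exp$ and normalise the Haar integral to $1$: since $\exp$ is a diffeomorphism near $0$ with Jacobian smooth and bounded above and below on $\overline{B_1^{\mathfrak g^{-0}}}$, and since the right-invariant frame $\mathcal B\cap\mathfrak g^{-0}$ has coefficients smooth and bounded with all derivatives on $\overline{B_1^{G^{-0}}}$, the resulting $\til\theta$ satisfies $\supp\til\theta\subset B_r^{G^{-0}}$, $\int_{G^{-0}}\til\theta=1$, and $\|\til\theta\|_\ell\ll_\ell\|\eta_r\|_\ell\ll_\ell r^{-\ell-m/2}$ by the chain rule.

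\emph{The function $\varphi$.} By Lemma~\ref{lem;structure} the multiplication $G^-\times G^c\times G^+\to G$ is a diffeomorphism onto an open set, so $(g^-,g^c,g)\mapsto g^-g^c g\,x$ identifies $B_r^{G^{-0}}\times B_r^{G^+}$ (coordinates $g^-g^c$, $g$) with the open set $V:=B_r^{G^{-0}}B_r^{G^+}x\subset X$; here one uses $B_r^{G^{-0}}B_r^{G^+}\subset B_{2r}^G$ (right invariance of $d_G$) together with hypothesis (ii). On $V$ the formula $\varphi(g^-g^c g x)=\til\theta(g^-g^c)\Delta(g^c)^{-1}\theta(g)$ is a product of smooth functions of the smoothly varying coordinates, hence smooth, and since $\supp\til\theta$, $\supp\theta$ are compact subsets of the open balls $B_r^{G^{-0}}$, $B_r^{G^+}$, the set $\supp\varphi$ is a compact subset of $V$, so $\varphi$ extends by $0$ to a compactly supported smooth function on $X$. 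For the Sobolev bound, work in the chart $\pi_x$ on $B_{2r}^G$, a Riemannian local isometry, so that the elements of $\mathcal B$ act as the right-invariant vector fields of $G$; written in the coordinates $(g^-,g^c,g)$ these, and their iterated products up to order $\ell$, are smooth vector fields with coefficients bounded together with all derivatives on a fixed compact region (here $r<1$ is used). Applying at most $\ell$ of them to $\varphi$ and using Leibniz, pointwise on $V$ one gets
\[
|\partial^\alpha\varphi(g^-g^c g x)|\ll_\ell\sum_{|\beta|,|\gamma|\le\ell}\bigl|(\partial^\beta\til\theta)(g^-g^c)\bigr|\,\bigl|(\partial^\gamma\theta)(g)\bigr|,
\]
the factors from $\Delta(g^c)^{-1}$ and from the coordinate change being absorbed into the constant. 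Squaring, using $\bigl(\sum_{i=1}^N a_i\bigr)^2\le N\sum a_i^2$, integrating over $X$ via the Haar decomposition (\ref{eq;haar}) (whose density in these coordinates is the bounded function $\Delta(g^c)$) and factoring, one is left with $\sum_{|\beta|\le\ell}\|\partial^\beta\til\theta\|_{L^2(G^{-0})}^2\ll r^{-2\ell-m}$ times $\sum_{|\gamma|\le\ell}\|\partial^\gamma\theta\|_{L^2(G^+)}^2\ll_\ell\|\theta\|_\ell^2$; summing over $|\alpha|\le\ell$ gives $\|\varphi\|_\ell\ll_\ell r^{-\ell-m/2}\|\theta\|_\ell$, which is (\ref{eq;sobolev2}). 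The one delicate point is uniformity in $r$: every Jacobian, density, and vector-field coefficient above must be controlled on a single fixed compact coordinate neighbourhood of $1_G$ (resp.\ of $x$), and it is exactly the constraint $r<1$ together with the openness statement in Lemma~\ref{lem;structure} that legitimises this; granting it, the rest is the scaling computation and the Leibniz rule.
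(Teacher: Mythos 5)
Your proof is correct but takes a genuinely different route from the paper. The paper's entire proof is a one-line citation to Kleinbock--Margulis \cite[Lemma 2.4.7]{km96} together with hypotheses (\rmnum{1}) and (\rmnum{2}) of Lemma~\ref{lem;km 12 general}; you instead give a direct, self-contained argument. Your construction of $\til\theta$ by rescaling a fixed bump $\eta$ in $\mathfrak g^{-0}$-coordinates and transporting by $\exp$ is the standard one and your scaling computation $\|\partial^\alpha\eta_r\|_{L^2}=r^{-|\alpha|-m/2}\|\partial^\alpha\eta\|_{L^2}$ is exactly right. For $\varphi$, the key observations you use --- that $B_r^{G^{-0}}B_r^{G^+}\subset B_{2r}^G$ by right-invariance, that $\pi_x$ is a Riemannian local isometry on $B_{2r}^G$ by hypothesis (\rmnum{2}), that Lemma~\ref{lem;structure} makes $(g^-g^c,g)$ smooth coordinates near $1_G$, and that all vector-field coefficients, Jacobians, and the density $\Delta(g^c)$ are bounded with all derivatives on a fixed compact neighbourhood because $r<1$ --- are precisely what is needed to make the Leibniz--plus--Haar-factorisation argument uniform in $r$, and you correctly flag this as the delicate point. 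What your approach buys is transparency and independence from the external reference; what the paper's approach buys is brevity, since the cited lemma of \cite{km96} was formulated exactly to package this kind of ``thickening'' Sobolev estimate. One small notational remark: the paper writes $G^{-0}$ in this lemma but has earlier introduced only $G^{-c}=G^-G^c$; you have (correctly) interpreted $G^{-0}$ as the group $G^-G^c$ of dimension $m$, consistent with the role it plays in Lemma~\ref{lem;km 12 general} and (\ref{eq;haar}).
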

Remark:  Besides $\ell$  the implied constants in this lemma also depend on  the space  $ X $,  the choice of the  basis
$\mathcal B$ and  the metric on $ G $, which we do not specify here  since we consider them as fixed with $G$.  

\begin{proof}
This lemma follows from 
\cite[Lemma 2.4.7]{km96} and the  assumptions (\rmnum{1}) (\rmnum{2}) of Lemma \ref{lem;km 12 general}.
\end{proof}

\begin{proof}[Proof of Lemma \ref{lem;km 12 general}]
Let $\til \theta$ and $\varphi$ be as in Lemma \ref{lem;sobolev}. In view of (\ref{eq;haar})
for all $t\ge 0$
\begin{align*}
&\left |\int_{G^+}\theta(g)\psi(b_tg x) \dd g-\langle  \varphi,  b_{-t}\psi  \rangle \right| \\
=&\left| 
\int_{ {G^+ }} \theta(g)\psi ( b_t  g x )\dd g
-\int_G \til\theta (g^-g  ^c )\Delta (g  ^c )^{-1}\theta(g)
\psi ( b_t g^-g  ^c gx) \dd(g^-g  ^c g)
\right |\\
=&\left| 
\int_{ {G^+ }} \theta(g)\big (\psi ( b_t  g x )-\psi ( b_t g^-g  ^c gx)\big)\dd g
\right |\\
\le &
\int_{G^+} |\theta(g)|\cdot
\left| \psi ( b_t  g x )-\psi ( b_t g^-g  ^c b_{-t}\cdot b_t gx)\right |\dd g
\\
\new{\le} & \|\psi \|_{\mathrm{Lip}}\cdot r \cdot \int_{ {G^+ }} |\theta(g)|\dd g.
\end{align*}
On the other hand, by Lemma \ref{lem;usual mixing} and  (\ref{eq;sobolev2})
\begin{align*}
|\langle \varphi, g_{-t} \psi  \rangle|\new{\le E_0} \|\varphi \|_{\ell_0}\|\psi\|_{\ell_0} e^{-t\delta_0} 
 \new {\ll  E_0}  r^{-\ell_0-\frac{m}{2}} e^{-t\delta_0}\|\psi \|_{\ell_0} \|\theta\|_{\ell_0}.
\end{align*}
So (\ref{eq;I x bt}) follows from the triangle inequality and the above two estimates.
\end{proof}

\subsection{Proof of effective equidistribution}\label{sec;effective}

Recall that the followings are fixed: $G, H, \Gamma , A,\allowbreak U , \{b_t: t\in \R\}, b=b_1$. 
As in 
\S \ref{sec;4.1} we  take  $G^+, G^c, G^-$ to be the unstable horospherical, central and stable
horospherical subgroups of $b$ in $G$. We fix $\delta_0 >0$ and $\ell_0\in \N$ so that Lemma \ref{lem;usual mixing} holds. Let $\ell\in \N$ such that  $\ell \ge \ell_0+\mathrm{dim}\, G$. 
We choose  $\delta_1$ with $0<\delta_1\le 1$ such that Theorem \ref{thm;general} holds for $\delta =\delta_1$.
Let 
\begin{align}\label{eq;effect delta}
\delta=\frac{\delta_0\delta_1}{3\ell+\delta_1},
\end{align}
which (in view of the dependence of $\delta_0, \delta_1$ and $\ell$) depends on $ H, X$ and $ \{b_t\}$.

Recall that $k\in \N$ and $f, \ba, \bx, \Psi$ are given in (\ref{eq;notation})
and (\ref{eq;notation1}).
Let 
\begin{align}
\|\Psi\|_{\mathrm{sup}} &= \max \left\{   \sup_{x\in X} |\psi_i(x)| 
: 1\le i\le k
 \right \},\\
\|\Psi\|&=\max \left\{\sup_{x\in X} |\psi_i(x)| ,    \|\psi_i\|_{\mathrm{Lip}} 
 ,  \|\psi_i\|_\ell 
   : 1\le i \le k \right \}. 
\end{align}

Instead of estimating $I_f(\bx ;\ba;\Psi)$ directly, we consider more generally
\begin{align}\label{eq;who}
J_f(\bx; \ba;\Psi)=\int_U f(h)\prod_{i=1}^k \psi_i(a_ih x_i)\dd h. 
\end{align}
It is not hard to see that 
$$
I_f(\bx;\ba;\Psi)=J_f(\bx;\tilde \ba;\Psi)\quad 
\mbox{where} \quad \tilde \ba=(a_1, a_2a_1,\ldots, a_ka_{k-1}\cdots a_{2}a_1).$$ 
We first effectively reduce the estimate of (\ref{eq;who}) for $k\ge 2$ to that of $J_f (\bx ', \ba', \Psi')$ where 
 $$\bx'=(x_1, \dots, x_{k-1}), \ba'=(a_1, \ldots, a_{k-1})
\mbox{ and } \Psi'=(\psi_1, \dots, \psi_{k-1}).$$
We set $J_f (\bx ', \ba', \Psi')=\int_U f(h)\dd h $ if $k=1$ so that 
the following theorem also holds in this case.

\begin{thm}
	\label{prop;explicit}
	Let the notation be as in Theorem \ref{thm;effective} and let    $\delta$ be as in (\ref{eq;effect delta}).
	Then there exists $M_0\ge 1$ depending 
	only
	on $L$ and $f$ such that 
	\begin{align}\label{eq;prop new}
\left 	|J_f(\bx; \ba;  \Psi)-J_f (\bx ', \ba', \Psi')\cdot \int_X \psi_k \dd \mu_X 
\right |\le M_0 k\|\Psi\|_{\mathrm{sup}}^{k-1} \|\Psi\|e^{-\delta \min_{i=0}^{k-1}\lfloor a_k a_i^{-1} \rfloor}
	\end{align}
where $a_0=1_H$. 
\end{thm}

\begin{proof}

Since  (\ref{eq;prop new}) holds for $M_0=2\int_U|f(h)|\dd h$ if 
$\min_{i=0}^{k-1}\lfloor a_k a_i^{-1} \rfloor =0$, 
in  the rest of the proof we assume   that $\min_{i=0}^{k-1}\lfloor a_k a_i^{-1} \rfloor=t>0$. 
Let 
\begin{align}\label{eq;effect r}
r=e^{\frac{-t\delta_0}{3\ell+\delta_1}}<1.
\end{align}
Similar to Lemma \ref{lem;sobolev}, there exists  a smooth function   $\theta:  G^+ \to \R_{\ge 0}$  with 
$\supp \theta\subset B_r^{ G^+}$, $\int_{ G ^+} \theta(g)\dd g=1$ and $\|\theta\|_\ell\ll \, r^{-2\ell}$.
It follows that  
\begin{align}\label{eq;induction I}
J_f(\bx;\ba; \Psi )=\int_{ G^+}\theta(g)\int_Uf(h)\prod_{i=1}^k\psi _i(a_ihx_i)\dd h \dd g.
\end{align}

For every $g\in  G^+$  and $0\le i \le k-1$ we let 
\[
g_i= ( b_{-t}a_k a_i^{-1})^{-1} g\, (  b_{-t}a_k a_i^{-1} )\in G^+. 
\]
The definition of $t$ implies that  $\alpha( \log b_{-t} a_k a_i^{-1})\ge 0$ for every $\alpha\in \Phi (\mathfrak g^+)$  and $0\le i\le {k-1}$.
Therefore, 
\begin{align}\label{eq;d 1gi}
d_G(1_ G , g_i)\le d_{G^+}(1_ G , g_i) \ll d_{G^+}(1_ G , g) <r \quad \forall g\in \mathrm{supp}~ \theta\subset G^+.
\end{align}

Making change of variables in (\ref{eq;induction I}) by $h\to  g_0 h$, one has 
\begin{align}
 J_f(\bx;\ba; \Psi ) \label{eq;I f phi}
=
\int_{ G^+}\theta(g)\int_Uf(g_0h)\psi _k(b_t g  b_{-t}a_k   hx_k)\prod_{i=1}^{k-1}\psi _i(g_i a_ihx_i)\dd h\dd g .
\end{align}
It follows from (\ref{eq;d 1gi}) that for all $g\in \supp \theta$ and $1\le i\le k-1$
\begin{align}
|f(g_0h)-f(h)|&\ll _f \, r \label{eq;indicate},\\
|\psi _i(g_i a_ihx_i)-\psi _i(a_i hx_i)|& 
\new{\le  \|{\psi _i}\|_{\mathrm{Lip}}}\, r  .\label{eq;indicate1}
\end{align}
Replacing  $f(g_0 h)$ by $f(h)$
 and $\psi _i(g_i a_ihx_i)$ by 
 $\psi _i(a_i hx_i)$ for $1\le i\le k-1$
 in the integrand of (\ref{eq;I f phi}), and then changing the order of integration,  one has 
\begin{align}\label{eq;psi g}
&J_f(\bx;\ba; \Psi ) 
=
\int_{U}
f(h)\varphi(h)\prod_{i=1}^{k-1}\psi _i( a_ihx_i)\dd h+\new {O_{f }(k\|\Psi\|_{\mathrm{sup}} ^{k-1}\|\Psi\| r)},
\end{align}
where 
\begin{align*}
\varphi(h)&=\int_{ G ^+}
\psi _k(b_t g b_{-t}a_khx_k) \,\theta(g)\dd g.
\end{align*}

We choose $r_0>0$ so that $\supp f\subset B_{r_0}^U$. Since $r_0$ is determined by $f$, any constant depending
on $r_0$  in fact depends on $f$. 
We take
\begin{align*}
B_1&=\{h\in B_{r_0}^U: b_{-t} a_khx_k\in \mathrm{Inj}_{3r}^X \}\quad \mbox{and}\quad 
B_2=B_{r_0}^U\setminus B_1.
\end{align*}
Since $b_{-t} a_k\in \overline{A_U^+}$, 
Theorem \ref{thm;general} implies 
\begin{align}\label{eq;B 2}
\mu_U(B_2)\ll_{ L, f} r^{\delta_1}.
\end{align}
By (\ref{eq;psi g}) and (\ref{eq;B 2})
\begin{align}
 J_f(\bx;\ba; \Psi ) \label{eq;newsub}
=&\left(\int_{B_1}+\int_{B_2}\right)
f(h)\varphi(h)\prod_{i=1}^{k-1}\psi _i( a_ihx_i)\dd h  +\new {O_{f}(k\| \Psi\|^{k-1}_{\mathrm{sup}}\|\Psi\| r) }     \\
=& \int_{B_1}f(h)\varphi(h)\prod_{i=1}^{k-1}\psi _i( a_ihx_i)\dd h+\new {O_{L,f}(k\|\Psi\|_{\mathrm{sup}}^{k-1}\|\Psi\| r^{\delta_1})}.\notag
\end{align}
For $h\in B_1$,
it follows from Lemma \ref{lem;km 12 general} and the properties of $\theta $ that 
\begin{align}\label{eq;psi k}
\left |\varphi(h)-\int_X\psi _k \dd \mu_X  \right |
\ll \new {( \|\psi_k\|_{\ell}+\|\psi_k\|_{\mathrm{Lip}})} ( r^{-3\ell} e^{-t\delta_0}+r).
\end{align}
By (\ref{eq;newsub}) and    (\ref{eq;psi k}), we have 
\begin{align*}
J_f(\bx;\ba; \Psi )  
&=\int_X\psi _k\dd \mu_X\cdot\int_U f(h)\prod_{i=1}^{k-1}\psi _i( a_ihx_i)\dd h\\
&+\new {O_{L,f}(k\|\Psi\|^{k-1}_{\mathrm{sup}} \|\Psi\|(r^{\delta_1}+ r^{-3\ell} e^{-t\delta_0}))}.
\end{align*}

In view of the choice of $\delta $ in (\ref{eq;effect delta}) and $r$ in (\ref{eq;effect r}) one has
\begin{align*}
J_f(\bx;\ba; \Psi )=\int_X\psi _k\dd \mu_X\cdot\int_U f(h)\prod_{i=1}^{k-1}\psi _i( a_ihx_i)\dd h
+\new {O_{L, f}(k\|\Psi\|^{k-1}_{\mathrm{sup}}\|\Psi\|e^{-t\delta })}.
\end{align*}

\end{proof}

\begin{proof}[Proof of Theorem \ref{thm;effective}]
 We will show that 	Theorem \ref{thm;effective} holds for $\delta $ in (\ref{eq;effect delta}) and 
	$M=M_0 k^2 \|\Psi\|_{\mathrm{sup}}^{k-1} \|\Psi\| $ where $M_0\ge 1$ is a constant  depending 
	only
	on $L$ and $f$. 
Using  (\ref{eq;prop new}) for each  $k, k-1, \ldots , 1$ and $J_f(\bx; \tilde \ba;\Psi)$ step by step, 
we get
\begin{align*}
I_f(\bx;\ba; \Psi )=\int_U f\cdot \prod_{i=1}^k\int_X\psi_i \dd \mu_X +\new{O_{L, f}(k^2\|\Psi\|_{\mathrm{sup}}^{k-1}\|\Psi\|e^{-\delta \lfloor \ba \rfloor})}. 
\end{align*}
\end{proof}

\begin{proof}
	[Proof of Theorem \ref{thm;periodic}]
     The proof is in principle the same as that of Theorem \ref{thm;effective} with
     $k=1, a_1=a , \psi_1=\psi$ and 
      $f$ equal to   the characteristic function of the fundamental domain $F$ of $Ux$. 
     So we only sketch the proof and 
     indicate the modifications. 
     
      We begin with  replacing  $\lfloor \cdot \rfloor$ by $\lfloor \cdot \rfloor'$. 
      Then in (\ref{eq;d 1gi}) we loose the  control of $d_{G^+}(1_G, g_0)$ if the conjugation of  $a_1'=b_{-t}a_1$ does not expand $g$.   
 Using a  different idea which is due to \cite{dkl}, instead of (\ref{eq;psi g}), we get   
     \begin{align}\label{eq;change}
     I_f(x; a; \psi)&= \int_U \theta (g)\int_{g_0^{-1} F}  \psi (b_t g a_1' h x )
     \dd h\dd g\\
      &= \int_U \theta (g)\int_{ F}  \psi (b_t g a_1' h x ) \dd h\dd g\notag \\
    & =\int_ { F} \int_U  \theta (g) \psi (b_t g a_1' h x )
     \dd g\dd h,\notag
     \end{align}
     where in the second equality we use the assumption that $F$ is a fundamental domain of the periodic orbit $Ux$. All the rest parts of the proof  are the same if we change the use of (\ref{eq;psi g}) by (\ref{eq;change}). 
\end{proof}

\begin{proof}[Proof of Corollary \ref{cor;effective}]

	It is a special case of Theorem \ref{thm;coninuous}. 
\end{proof}

\appendix

\section{Multiple ergodic theorem}
\begin{center}
		{Rene R\"uhr}\footnote{School of Mathematical Sciences, Tel Aviv University, Tel Aviv, Israel. R. R\"{u}hr was supported in part by the S.N.F. grant project number 168823. E-mail: reneruehr@googlemail.com}\quad and \quad
		{Ronggang Shi}
\end{center}

 The study of multiple pointwise ergodic theorem 
has a long history. The first result is double pointwise ergodic theorem due to Bourgain \cite{b}. Recently there are some progress on this question  for ergodic distal systems Huang-Shao-Ye \cite{hsy} and weakly mixing pairwise independent systems  Gutman-Huang-Shao-Ye \cite{ghsy}. We remark here that all these results are for different orders of a fixed measure preserving transformation. On the other hand, 
the multiple mean  ergodic theorem for commuting maps are well-understood, see Tao \cite{t} and references there.

In this appendix we prove a multiple pointwise ergodic theorem with an error rate on homogeneous space for commuting actions. Let the notation and  assumptions be as in Theorem \ref{thm;effective}. Recall that $\mathfrak a_{\mathfrak u}^+\subset \mathfrak a$ such that $\exp a_{\mathfrak u}^+=A^+_U$.
	Let 
\[
\mathfrak a^+_{\mathfrak u,   b}=\{s\in \mathfrak a_{\mathfrak u}^+: H_b^+\le  H_{\exp s}^+\cap U \}. 
\]
\begin{thm}
	\label{thm;coninuous}
	Let $s_1, \ldots, s_m\in \mathfrak a^+_{\mathfrak u, b}$, $w_i=s_1+\cdots +s_i$, $ \varphi_i\in 
	\mathcal S(X) $ and $y_i\in X$,  where $i=1, \ldots, m$ and $m\in \N $. 
	Then for any   $\varepsilon >0$ we have  
	\begin{align}\label{eq;multiple}
	\frac{1}{T}\int_0^T\prod_{i=1}^m \varphi_i(\exp (tw_i) h y_i)  \dd t =\prod_{i=1}^m\mu_X(\varphi_i)
	+o	(T^{-1/2}\log^{{\frac32+\varepsilon} } T)
	\end{align}
 for $\mu_U$ almost every $h\in U$. 
\end{thm}

The conclusion of the  theorem is called the  effective $m$-multiple ergodic theorem. 
It is already noticed in \cite{ksw} that the effective double equidistribution implies the  effective  ergodic theorem. Here we use the effective $2m$-step equidistribution to prove the  
effective $m$-multiple ergodic theorem. 

We are going to reduce the  multiple ergodic average to the  ergodic average and use the following theorem. 

	\begin{thm}[\cite{ksw} Theorem 3.1]\label{thm;ksw}
	Let $(Y, \nu)$  be a probability space, and let $F:Y\times \R_+\to \R$ be a bounded measurable function.
	Suppose  there exist $\delta >0$ and $C>0$ such that for any $r
	\ge t \ge 0$,
	\begin{align}\label{eq;effective rate}
	\left |
	\int_Y{F(x,t)}{F(x,r)}\dd \nu( x)
	\right|
	\le Ce^{-\delta \min(t, {r}-t)}.
	\end{align}
	Then for any  $\varepsilon >0$ we have 
	\begin{align*}
	\frac{1}{T}\int_0^T {F(y,t)}\dd t=o
	(T^{-1/2}\log^{{\frac32+\varepsilon} } T)
	\end{align*}
	for $\nu$ almost every ${y}\in Y$. 
\end{thm}
We will apply the above theorem with $Y=U$ and 
\begin{align}\label{eq;fut}
F(h, t)= \varphi_1(\exp (tw_1) h y_1)\cdots \varphi_m({\exp (tw_m) hy_m }).
\end{align}

\begin{lem}
	\label{lem;tech}
	Suppose that  $f\in C_c^\infty (U)$ and  $\mu_X(\varphi_m)=0$. 	Then there  exist $\sigma >0$ and $C>0$ such that for any $r
	\ge t \ge 0$,
	\begin{align}
	\left |
	\int_U{F(h,t)}{F(h,r)}f(h)\dd \mu_U( h)
	\right|
	\le Ce^{-\sigma( {r}-t)}.
	\end{align}
\end{lem}

\begin{proof}
We apply  Theorem \ref{prop;explicit} with $k=2m$ and 
	\begin{align*}
&\psi_i=\varphi_i,\quad  \psi_{m+i}=\varphi_i, \\
& a_i=\exp(tw_i),\quad  a_{m+i}=\exp(r w_i), \\
& x_i=y_i , \quad x_{m+i}=y_i\quad (1\le i\le m). 
\end{align*}
There exists $\kappa >0$ depending on $w_i\ (1\le i\le m)$ and $b$ such that 
\[
\min_{i=0}^k\lfloor a_k a_i^{-1} \rfloor\ge \kappa ( r-t). 
\]
Let $\delta$ be as in Theorem \ref{prop;explicit}
and $\sigma=\kappa \delta$. 
Then it follows from 
	(\ref{eq;prop new}) 
	and the assumption $\mu_X(\varphi_m)=0$  that 
	\begin{align*}
	\left |
\int_U{F(h,t)}{F(h,r)} f(u)\dd \mu_U( h)
\right|= |	J_f(\bx;\ba; \Psi)|\ll e^{-\sigma(r-t)}, 
	\end{align*}
	where the implied constant depends on $\bx, f$ and $ \Psi $. 
\end{proof}

\begin{proof}[Proof of Theorem \ref{thm;coninuous}]
	We prove it  by induction on $m$. Assume the theorem holds for $m-1$, where if $m-1=0$ we mean it 
	holds for the  constant function. 
	The left hand side of (\ref{eq;multiple}) is equal to 
	\begin{align*}
	\frac{1}{T}\int_0^T(\varphi_m-\mu_X(\varphi_m))\prod_{i=1}^{m-1} \varphi_i(\exp (tw_i) h x_i)  \dd t \\
	+\mu_X(\varphi_m)\cdot \frac{1}{T}\int_0^T\prod_{i=1}^{m-1} \varphi_i(\exp (tw_i) h x_i)  \dd t.
	\end{align*}
	In view of the induction hypothesis, it suffices  to prove  (\ref{eq;multiple}) under the additional assumption that  $\mu_X(\varphi_m)=0$.

	Assume now that $\mu_X(\varphi_m)=0$.	
		Let $f\in C_c^\infty (U)$ with $f\ge 0$ and $\int_U f\dd \mu_U=1$. Let $\nu=f(h)\dd \mu_U(h)$ be the associated probability measure on $U$. 
	It follows from Lemma \ref{lem;tech} 
      that the assumption of Theorem \ref{thm;ksw} holds for the function $F(h, t)$ defined by (\ref{eq;fut}) and the measure $
      \nu$ on $U$.
Therefore (\ref{eq;multiple}) holds for $\mu_U$ almost every $h\in \supp f$. By taking a countable 
	family $\{f_n\}$  of such $f$  with   $\bigcup \supp f_n=U$ one gets that 
	(\ref{eq;multiple}) holds for $\mu_U$ almost every $h$. 
\end{proof}

\end{document}